\theoremstyle{plain}
	\newtheorem{theorem}{Theorem}[section]
	\newtheorem*{theorem*}{Theorem}
	\newtheorem{lemma}{Lemma}[section]
	\newtheorem{proposition}{Proposition}[section]
	\newtheorem*{proposition*}{Proposition}
	\newtheorem{corollary}{Corollary}[section]
\theoremstyle{remark}
	\newtheorem{remark}{\textbf{Remark}}[]
\theoremstyle{definition}
	\newtheorem{definition}{Definition}[]
\DeclareMathOperator{\vect}{Span}
\DeclareMathOperator{\range}{R}
\DeclareMathOperator{\id}{Id}
\DeclareMathOperator{\domain}{dom}
\DeclareMathOperator{\codim}{codim}
\DeclareMathOperator{\deficiency}{def}
\DeclareMathOperator{\nullity}{nul}
\DeclareMathOperator{\supess}{supess}
\newcommand{\ve}{\varepsilon}
\newcommand{\eg}{\emph{e.g. }}
\newcommand{\RR}{\mathbb{R}}
\newcommand{\NN}{\mathbb{N}}
\newcommand{\TT}{\mathbb{T}}
\newcommand{\CC}{\mathbb{C}}
\newcommand{\SSS}{\mathbb{S}}
\newcommand{\Q}{\mathcal{Q}}
\newcommand{\LL}{\mathcal{L}}
\newcommand{\squaredBox}[1]{
  \medskip
  \begin{quote}
	  \fbox{\parbox{\linewidth\fboxrule\fboxsep}{#1}}
	\end{quote}
	\medskip
}
\author{Thomas Rey}
\address{Thomas Rey \\
CSCAMM, The University of Maryland \\
CSIC Building, Paint Branch Drive \\
College Park, MD 20740  \\
USA}
\email{trey@cscamm.umd.edu }
\keywords{Inelastic Boltzmann equation, granular gases, spectrum, dispersion relations, hydrodynamic limit, heat equation}
\subjclass[2010]{Primary: 76P05, 
  82C40, 
  Secondary: 35P20, 
  76T25 
}
\title[Spectral Study of the Linearized Granular Gases Operator]{A Spectral Study of the Linearized Boltzmann Equation for Diffusively Excited Granular Media}
\date{}
\begin{document}

  \begin{abstract}
    In this work, we are interested in the spectrum of the diffusively excited granular gases equation, in a space inhomogeneous setting, linearized around an homogeneous equilibrium.
    
    We perform a study which generalizes to a non-hilbertian setting and to the inelastic case the seminal work of Ellis and Pinsky \cite{ellis:1975} about the spectrum of the linearized Boltzmann operator. 
    We first give a precise localization of the spectrum, which consists in an essential part lying on the left of the imaginary axis and a discrete spectrum, which is also of nonnegative real part for small values of the inelasticity parameter.
    We then give the so-called inelastic ``dispersion relations'', and compute an expansion of the branches of eigenvalues of the linear operator, for small Fourier (in space) frequencies and small inelasticity.
    
    One of the main novelty in this work, apart from the study of the inelastic case, is that we consider an exponentially weighted  $L^1(m^{-1})$ Banach setting instead of the classical $L^2(\mathcal M_{1,0,1}^{-1})$ Hilbertian case, endorsed with Gaussian weights. We prove in particular that the results of \cite{ellis:1975} holds also in this space.

  \end{abstract}
  
	\maketitle
	
	\tableofcontents
	
	\newpage

	\section{Introduction}
	  \label{secIntro}
		
		Let $f^\ve := f^\ve(t,x,v)$ be a solution to the space inhomogeneous collisional kinetic equation
		\begin{equation}
		  \label{eqBoltzEPS}
			\frac{\partial f^\ve}{\partial t} + v \cdot \nabla_x f^\ve = \frac{1}{\varepsilon}\left ( \mathcal Q_\alpha(f^\ve,f^\ve) + \ve \, \Delta_v (f^\ve) \right ),
		\end{equation}
		where $t \geq 0$, $v \in \RR^d$ and $x\in \Omega$, for $\Omega$ being either the whole space domain $\RR^d$ or the torus\footnote{The case of a square domain $[-L,L]^d$, for $L \geq 0$ with specular reflection on the boundary can also be seen as a particular case of a torus made of $2^d$ independent copies of the initial box, using the parity of the normal component of the velocity of $f^\ve$ at the boundary (as noticed by Grad in \cite{Grad:1964}).} $\TT^d$.
		The collision operator $\Q_\alpha$ is the so-called \emph{granular gases} operator (sometimes known as the inelastic Boltzmann operator), describing an energy-dissipative microscopic collision dynamics, which we will present in the following section.
		The parameter $\ve > 0$ is the scaled \emph{Knudsen} number, that is the ratio between the mean free path of particles before a collision and the length scale of observation.
		
		Once $\ve$ goes to $0$, and then when the number of collisions per time unit goes to infinity, the complexity of equation \eqref{eqBoltzEPS} is (formally) greatly reduced, the solution being described almost completely by its local hydrodynamic fields, namely its \emph{mass} $N \geq 0$, its \emph{momentum}    $\bm u \in \RR^d$ and its \emph{temperature} $T \geq 0$. 
		These quantities are obtained from a particle distribution function $f$ by computing the first moments in velocity:
		\begin{equation}
      \label{defMacroQuantities}
      \begin{gathered}
	      N(t,x) \, = \, \int_{\RR^d} f(t,x,v) \, dv, \qquad 
	      N(t,x) \, \bm u(t,x) \, = \, \int_{\RR^d} f(t,x,v) \, v \, dv, \\
	      T(t,x) \, = \, \frac{1}{d \, N} \int_{\RR^d} f(t,x,v) \, |v - \bm u|^2 \, dv .
	    \end{gathered}
    \end{equation}  
    This reduction is usually carried on using the so-called Hilbert or Chapman-Enskog expansions of the solutions to a linearized version of the kinetic equation \eqref{eqBoltzEPS} (see \eg the book of Cercignani, Illner and Pulvirenti \cite{CIP:94} for a complete mathematical introduction in the elastic case).
    
    A rigorous mathematical proof of this ``contraction of the kinetic description'' (namely the hydrodynamic limit of the kinetic model towards a macroscopic one) for the elastic case has been first given for the linear setting in the paper of Ellis and Pinsky \cite{ellis:1975} but the inelastic case still remains to be investigated.
    An important step in the proof of this elastic limit is to give the so-called \emph{dispersion relations} of the collision operator, namely a Taylor expansion of the eigenvalues of the linearization of the collision operator, with respect to the space variable, near a global equilibrium (and this was the main purpose of \cite{ellis:1975}). 
    The precise knowledge of the dispersion relations is actually of crucial interest in the study of the full nonlinear and compressible hydrodynamic limit and it was for example used by Kawashima, Matsumura and Nishida in \cite{Nishida:78,Kawashima:79} (as a part of a rather abstract Cauchy-Kowalevski-type argument which is also related to Niremberg \cite{Nishida:77}).
    The work of Caflisch \cite{Caflisch:80} also relies (but perhaps not as centrally as the previous ones) on these dispersion relations.
    Let us also quote the work of Degond and Lemou \cite{DegondLemou:1997} where a similar analysis of the dispersion relations was conducted for the linearized Fokker-Planck equation.
    
    We propose to give in this paper the corresponding inelastic expansion, with respect to both the space variable and the inelasticity parameter, allowing to investigate in a future work first the two linearized hydrodynamic limits of our model ``à la Ellis et Pinsky'' and then the nonlinear, compressible ones ``à la Nishida''. 
    This result will allow us in particular to confirm a claim concerning the  \emph{clustering} behavior of granular gases made in the classical textbook \cite[p. 238]{brilliantov:2004} after a formal analysis, namely that 
    \begin{quote}
      \emph{the smaller the inelasticity, the larger the system must be to reveal clusters.}
    \end{quote}

	  \subsection{The Model Considered}

			Let $\alpha \in (0,1]$ be the restitution coefficient of the microscopic collision process, that is the ratio of kinetic energy dissipated during a collision, in the direction of impact. 
			Then, we can define a strong form of the \emph{collision operator} $\mathcal Q_\alpha$ by
			\begin{align}
			  \mathcal Q_\alpha(f, g)(v) & = \int_{\mathbb{R}^d \times \mathbb{S}^{d-1}} |u| \left( \frac{\, 'f \, 'g_*}{\alpha^2} - f \,g_* \right) b(\widehat u \cdot \sigma) \, d \sigma \, dv_*, \label{defBoltzOp}\\
			   & = \mathcal Q_\alpha^+(f,g)(v) - f(v) L(g)(v) \notag,
			\end{align}
			where we have used the usual shorthand notation $\, 'f := f('v)$, $\, 'f_* := f('v_*)$, $f := f(v)$, $f_* := f(v_*)$ and $\widehat u := u/|u|$. In \eqref{defBoltzOp}, $\, 'v$ and $\, 'v_*$ are the pre-collisional velocities of two particles of given velocities $v$ and $v_*$, defined for $\sigma \in \SSS^{d-1}$ as
			\begin{equation*}
				\left\{\begin{aligned} 
					& 'v = \frac{v + v_*}{2} - \frac{1 - \alpha}{4 \, \alpha} (v-v_*) + \frac{1+ \alpha}{4 \, \alpha} |v - v_*| \, \sigma, \\
					& 'v_* = \frac{v + v_*}{2} + \frac{1 - \alpha}{4 \, \alpha} (v-v_*) - \frac{1+ \alpha}{4 \, \alpha} |v - v_*| \, \sigma.
				\end{aligned} \right.
			\end{equation*}
			The unitary vector $\sigma$ is the center of  the \emph{collision sphere}  (see Figure \ref{figCollSphereInel}) and $u := v - v_*$  is the \emph{relative velocity} of the pair of particles.
			Finally, the function $b$ is the so-called \emph{angular cross-section}, describing the probability of collision between two particles. We assume that
			\begin{equation}
			  \label{hypCrossSec1}
			  b \text{ is a Lipschitz, non-decreasing and convex function on } (-1,1),
			\end{equation}
			and also that it is bounded from above and below by two nonnegative constants $b_m$ and $b_M$:
			\begin{equation}
			  \label{hypCrossSec2}
			  b_m \leq b(x) \leq b_M, \quad \forall x \in (-1, 1).
			\end{equation}
			In particular, this cross-section is integrable on the unit sphere, thus fulfilling the so-called Grad's cut-off assumption\footnote{Physically relevant in the case of inelastic collisions, due to the macroscopic size of the grains forming the gas.}.
		  The operator $\mathcal Q_\alpha^+(f,g)(v)$ is usually known as the \emph{gain} term because it can be understood as the number of particles of velocity $v$ created by collisions of particles of pre-collisional velocities $\, 'v$ and $\, 'v_*$, whereas $f(v)L(g)(v)$ is the \emph{loss} term, modeling the loss of particles of pre-collisional velocities $\, 'v$. 
		  
		  We can also give a weak form of the collision operator. 
		  Indeed, if $\omega \in \SSS^{d-1}$ is the direction of impact, we can parametrize the post-collisional velocities $v'$ and $v_*'$ as
		  \begin{equation*}
				\left \{\begin{aligned} 
					v' & = v - \frac{1 + \alpha}{2} \left( u \cdot \omega \right) \omega, \\
					v_*' & = v_* + \frac{1 + \alpha}{2} \left( u \cdot \omega \right) \omega.
				\end{aligned}\right .
			\end{equation*}
			Then we have the weak representation, for any smooth test function $\psi$,
			\begin{equation} 
			  \label{defBoltzweak}
				\int_{\RR^d} Q_\alpha(f,g) \, \psi(v) \, dv = \frac{1}{2 }\int_{\mathbb{R}^d \times \mathbb{R}^d \times \mathbb{S}^{d-1}} |u|f_{*} \, g \, \left(\psi' + \psi_*' - \psi - \psi_* \right) b(\widehat u \cdot \omega ) \, d\omega \, dv \, dv_*.
			\end{equation}
			
			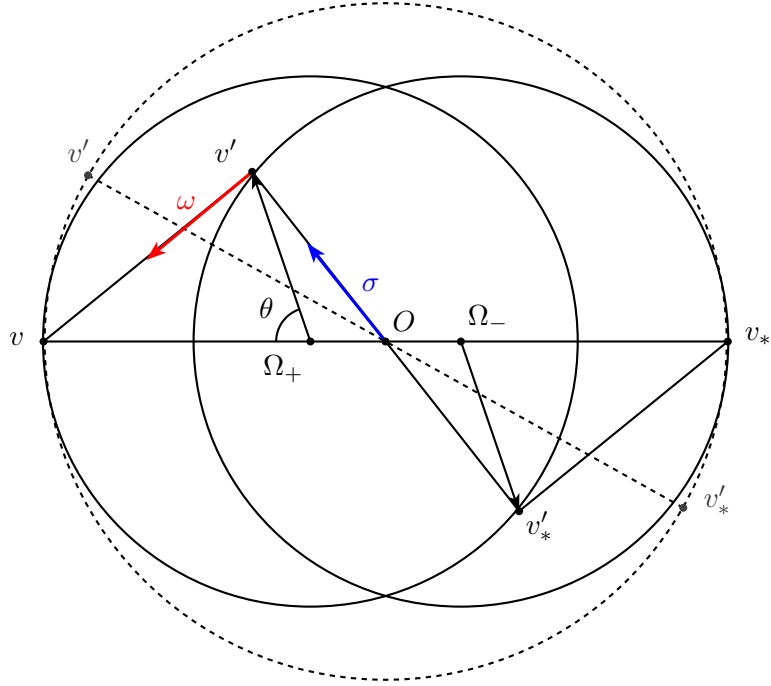
\begin{figure}
				\begin{center}
					\psset{xunit=4.5cm,yunit=4.5cm,algebraic=true,dotstyle=o,dotsize=3pt 0,linewidth=0.8pt,arrowsize=3pt 2,arrowinset=0.25}
					\begin{pspicture*}(-1.11,-1.05)(1.12,1.08)
						\pscircle(0.22,0){3.53}
						\psline(-0.39,0.5)(0.39,-0.5)
						\psline(-1,0)(1,0)
						\pscircle[linestyle=dashed,dash=2pt 2pt](0,0){4.5}
						\pscircle(-0.22,0){3.53}
						\psline[linestyle=dashed,dash=2pt 2pt](-0.87,0.49)(0.87,-0.49)
						\psline{->}(-0.22,0)(-0.39,0.5)
						\psline{->}(0.22,0)(0.39,-0.5)
						\parametricplot{1.906}{3.1416}{0.1*cos(t)+-0.22|0.1*sin(t)+0}
						\psline[linewidth=1.2pt,linecolor=blue]{->}(0,0)(-0.23,0.29)
						\psline(-1,0)(-0.39,0.5)
						\psline(0.39,-0.5)(1,0)
						\psline[linewidth=1.2pt,linecolor=red]{->}(-0.39,0.5)(-0.7,0.24)
						\psdots[dotstyle=*](-1,0)
						\rput[bl](-1.1,-0.01){$v$}
						\psdots[dotstyle=*](1,0)
						\rput[bl](1.05,-0.01){$v_*$}
						\psdots[dotstyle=*](0,0)
						\rput[bl](0.02,0.03){{$O$}}
						\psdots[dotstyle=*](-0.22,0)
						\rput[tr](-0.24,-0.04){$\Omega_+$}
						\psdots[dotstyle=*](0.22,0)
						\rput[bl](0.24,0.03){$\Omega_-$}
						\psdots[dotstyle=*](-0.39,0.5)
						\rput[bl](-0.5,0.53){$v'$}
						\psdots[dotstyle=*](0.39,-0.5)
						\rput[tl](0.41,-0.5){$v_*'$}
						\psdots[dotstyle=*,linecolor=darkgray](-0.87,0.49)
						\rput[bl](-0.93,0.53){\darkgray{$v'$}}
						\psdots[dotstyle=*,linecolor=darkgray](0.87,-0.49)
						\rput[bl](0.93,-0.51){\darkgray{$v_*'$}}
						\rput[bl](-0.37,0.07){$\theta$}
						\rput[bl](-0.07,0.15){\blue{$\sigma$}}
						\rput[bl](-0.61,0.39){\red{$\omega$}}
					\end{pspicture*}
					\caption{Geometry of inelastic collisions, $O := (v+v_*)/2 $ and $\Omega_{\pm} := O \pm (v_* - v)\, (1-e)/2 $ (dashed lines represent the elastic case).}
					\label{figCollSphereInel}
				\end{center}
			\end{figure}

			Thanks to this expression, we can compute the macroscopic properties of the collision operator $\Q_\alpha$. Indeed, we have the microscopic conservation of impulsion and dissipation of kinetic energy:
			\begin{align*}
				v' + v_*' & = v + v_*, \\
				|v'|^2 + |v_*'|^2 - |v|^2 - |v_*|^2 & = - \frac{1-\alpha^2}{2} | u \cdot \omega |^2 \leq 0.
			\end{align*}
			Then if we integrate the collision operator against $\varphi(v) = (1, \, v\, |v|^2)$, we obtain the preservation of mass and momentum and the dissipation of kinetic energy:
			\begin{equation*}
		    \int_{\RR^d}  \Q_\alpha(f,f)(v) \begin{pmatrix} 1 \\ v \\ |v|^2 \end{pmatrix} dv \,=\, \begin{pmatrix} 0 \\ 0 \\ - (1-\alpha^2) D(f,f) \end{pmatrix},
		  \end{equation*}
		  where $D(f,f) \geq 0$ is the \emph{energy dissipation} functional, given by 
		  \begin{equation}
		  	\label{defDissipFunctional}
		    D(f,f) := b_1 \int_{\mathbb{R}^d \times \mathbb{R}^d}f \, f_* \, |v-v_*|^3 \, dv \, dv_* \geq 0,
		  \end{equation}
		  and $b_1$ is the angular momentum, depending on the cross-section $b$ and given by
		  \begin{equation*}
		    b_1 := \int_{\SSS^{d-1}} (1 - (\widehat u \cdot \omega)) \, b(\widehat u \cdot \omega ) \, d \omega < \infty.
		  \end{equation*}
		  It is of course finite thanks to the bounds \eqref{hypCrossSec2}.
		  
		  In all the following of the paper, we shall assume that the restitution coefficient is related to the Knudsen number in the following way :
		  \[ \alpha = 1 - \ve.\]
		  The macroscopic properties of the collision operator, together with the conservation of positiveness, imply that the equilibrium profiles of $\Q_\alpha$ are trivial Dirac masses (see \eg the review paper \cite{Villani:2006} of Villani). Nevertheless, adding a thermal bath $(1-\alpha)\Delta_v$ will prevent this fact.
		  Indeed, the existence of a non-trivial equilibrium profile $F_\alpha$ to the space homogeneous granular gases equation with a thermal bath is insured by the competition occurring between the dissipation of kinetic energy occasioned by the collision operator $\Q_\alpha$ and the gain of energy given by the diffusion term $\Delta_v$.
		  
		  More precisely, if we multiply the equation $\mathcal{Q}_\alpha (f, f) + (1-\alpha) \, \Delta_v(f) = 0$ by $|v|^2$, integrate in velocity and divide by $1-\alpha$, we obtain using \eqref{defDissipFunctional} the balance equation
		  \begin{equation}
		    \label{defBalanceEq}
		    \left (1 + \alpha\right ) D(f,f) = 2 \, d.
		  \end{equation}	  
		  It has then been shown in \cite{bobylev:2004,mischler:20092} that under the hypotheses \eqref{hypCrossSec1}--\eqref{hypCrossSec2} on the cross-section, there exists $\alpha_* \in (0,1)$ such that for all $\alpha \in [\alpha_*, 1]$, there exists an unique equilibrium profile $0 \leq F_\alpha \in \mathcal S(\RR^d)$ of unit mass and zero momentum:
		  \begin{equation} 
		    \label{eqEquilib}
		    \left \{ \begin{aligned}
		      & \mathcal{Q}_\alpha (F_\alpha, F_\alpha) + (1-\alpha) \, \Delta_v(F_\alpha) = 0, \\
		      & \, \\
		      & \int_{\RR^d} F_\alpha(v) \, dv = 1, \qquad \int_{\RR^d} F_\alpha(v) \, v \, dv =  0.
		    \end{aligned} \right.
		  \end{equation}
		  In the last expression, $\mathcal S(\RR^d)$ denotes the Schwartz class of $\mathcal C^\infty$ functions decreasing at infinity faster than any polynomials. 
		  The tails of this distribution are exponentials, of order $3/2$.
		  
		  Of course, if $\alpha = 1$ (elastic, non-heated case), the distribution $F_1$ is nothing but the following \emph{Maxwellian}\footnote{Hence, there is a bifurcation which occur between the inelastic heated case and the elastic nonheated one.} distribution 
		  \begin{equation}
		    \label{defG1}
		    F_1(v) := \mathcal {M}_{1,0,\bar T_1}(v),
		  \end{equation}
		  where $\mathcal{M}_{N,\,\bm u,\,T}$ is the Maxwellian distribution of mass $N$, velocity $\bm u$ and temperature $T$, only equilibria of the elastic collision operator $\Q_1$ (see \eg \cite{CIP:94} for more details), and given by 
		  \[ \mathcal{M}_{N,\,\bm u,\,T}(v) := \frac{N}{(2 \pi T )^{d/2}} \exp\left(\frac{|v - \bm u|^2}{2 T} \right )\]
		  for $(N, \bm{u}, T) \in \RR^{d+2}$. The quantity $\bar T_1$ in \eqref{defG1} is defined by  passing to the limit $\alpha \to 1$ in the balance equation \eqref{defBalanceEq} :
		  \begin{equation*}
		    D(F_1,F_1) =  d.
		  \end{equation*}
		  We can then show thanks to this relation (see \cite{mischler:20092} for details) that $\bar T_1$ is given by
		  \begin{equation}
		    \label{defElasticEquilibTemperature}
		    \bar T_1 = \frac{1}{2} \frac{d^{2/3}}{b_1^{2/3}} \left ( \int_{\RR^d} \mathcal{M}_{1,0,1}(v) |v|^3 \, dv\right )^{-2/3}.
		  \end{equation}
		  
    \subsection{The Linearized Operator}
				
			As we have said in the introduction, our goal is to perform the fluid dynamic limit $\ve \to 0$ of equation \eqref{eqBoltzEPS}. 
			By rescaling the time $\widetilde t = t / \ve$ and introducing a new distribution $\widetilde f(\widetilde t, x, v) = f(t,x,v)$, the equation \eqref{eqBoltzEPS} now reads (forgetting the tildas)
			\begin{equation}
		    \label{eqBoltzEPSscal}
			  \frac{\partial f^\ve}{\partial t} + \ve \, v \cdot \nabla_x f^\ve = \mathcal Q_\ve(f^\ve,f^\ve) + \ve \, \Delta_v (f^\ve).
		  \end{equation}
			The hydrodynamic limit then amounts to consider the large time, small space variations of the model (see the paper of Carlen, Chow and Grigo \cite{carlen:2010} for more details on the scaling and on the different types of limit models it can yields).
			This means as $\ve = 1-\alpha$ that we are studying \emph{fluctuations} $g$ of $f^\alpha$ near the space homogeneous equilibrium profile $F_\alpha$: 
			\begin{equation}
			  \label{defFluc}
			  f^{\alpha} = F_\alpha + g. 
			\end{equation}
			By plugging this expansion on equation \eqref{eqBoltzEPSscal} and using the equilibrium relation \eqref{eqEquilib}, we obtain the following equation for $g$:
			\begin{equation}
			  \label{eqScalFluc}
			  \frac{\partial g}{\partial t} + (1-\alpha) \, v \cdot \nabla_x g = \LL_\alpha \, g + (1-\alpha) \, \Gamma_\alpha(g,g),
			\end{equation}
      where the linearized operator $\mathcal{L}_\alpha$ is given for $v \in \RR^d$ by
			\begin{equation*}
	      \mathcal{L}_\alpha(g)(v) \, := \, \mathcal{Q}_\alpha (g, F_\alpha)(v) +  \mathcal{Q}_\alpha (F_\alpha, g)(v) + (1-\alpha) \Delta_v(g)(v),
			\end{equation*}
			and $\Gamma_\alpha$ is the quadratic remainder.
			
			In order to prove rigorous results on the original model, such as nonlinear stability, it will be crucial that the fluctuation $g$ lives in a weighted $L^1$ space.
			Indeed, to this purpose, we shall need to connect the properties of the linearized operator $\LL_\alpha$ to the existing $L^1_3$ \emph{a priori} estimates for the nonlinear operator $\Q_\alpha$. These regularity properties were discussed extensively by  Mischler and Mouhot  in the series of paper \cite{mischler:20091,mischler:20092}.
			As we can see in these papers (we recalled the most important properties in the Appendix), we will need to take $g \in L^1(m^{-1} )$, for $m$ an \emph{exponential weight} function: there exists $a>0$ and $0 < s < 1$ such that
			\begin{equation}
			  \label{defExpWeightFunc}
			  m(v) := \exp(-a\,|v|^s).
		  \end{equation}
		  The expansion \eqref{defFluc} is well defined provided that the original distribution $f \in L^{1}\left (m^{-1}\right )$.

			Let us present some basic properties of the linear operator $\LL_\alpha$. 
			We first need to define the so-called \emph{collision frequency} by
	    \begin{equation*}
	      \nu_\alpha(v) := L(F_\alpha)(v) = \int_{\mathbb{R}^d \times \mathbb{S}^{d-1}} |v-v_*| \, F_\alpha(v_*) \, b(\widehat u \cdot \sigma) \, d \sigma \, dv_*.
	    \end{equation*}
	    It is known (see for example the lemma 2.3 of  \cite{mischler:20091} for an elementary proof) that for any $g \in L^1_3(\RR^d$), there exists some explicit nonnegative constants $c_0$, $c_1$ such that
	    \[0 < c_0 \, (1 + |v|) \leq L(g)(v) \leq c_1 \, (1+|v|), \quad \forall \in v \in \RR^d. \]
	    In particular, the collision frequency $\nu_\alpha$ verifies
			\begin{equation} 
			  \label{nuCrois}
				0 < \nu_{0,\alpha} \, (1 + |v|) \leq \nu_\alpha(v) \leq \nu_{1,\alpha} \, (1+|v|),
			\end{equation}
			for two explicit nonnegative constants $\nu_{0,\alpha}$, $\nu_{1,\alpha}$.			
			Then, we can rewrite the linearized collision operator as a difference of nonlocal and local operators:
			\begin{equation*}
			  \LL_\alpha (g)\, = \, \mathcal L_\alpha^+(g) - \mathcal L^*(g) - \mathcal L^{\nu_\alpha}(g),
			\end{equation*}
			where  $\mathcal L_\alpha^+$ is the linearization near $F_\alpha$ of the gain term, $\mathcal L^*$ a convolution operator and  $\mathcal L^\nu$ is the operator of multiplication by a function of the velocity variable $\nu$. 
			Classically, for $\alpha = 1$, the linearized operator splits between a compact operator  on $L^1(m^{-1})$ (see the paper of Mouhot \cite{Mouhot:2006} for this particular exponentially weighted $L^1$ case) and a multiplication operator:
			\begin{align*}
			  \mathcal L_1(g) & \, = \, \mathcal L_1^c(g) - \mathcal L^{\nu_1}(g).
	    \end{align*}		 
	    We will see in Section \ref{secLocaliz} that the same type of decomposition holds for $\LL_\alpha$.

			As a first step to treat mathematically the question of the hydrodynamic limit of equation \eqref{eqBoltzEPS}, we shall forget the nonlinearity in equation \eqref{eqScalFluc} and study the hydrodynamic limit of the linear equation
			\begin{equation}
			  \label{eqScalFlucLin}
			  \frac{\partial g}{\partial t} + (1-\alpha) \, v \cdot \nabla_x g = \LL_\alpha \, g.
			\end{equation}
			One strategy of proof is to compare the spectrum of    the linear operator
	  	\begin{equation}
			  \label{defOpLinXV}
				- (1-\alpha) \, v\cdot \nabla_x + \mathcal{L}_\alpha,
			\end{equation}
			to the one of the linearized fluid equation associated to the limit, as done in the seminal paper of Ellis and Pinsky \cite{ellis:1975}.
			As a byproduct, the study of this spectrum will allow us to answer to the question of the stability of the solutions to equation \eqref{eqScalFlucLin}, by proving that the real part of the eigenvalues of \eqref{defOpLinXV}	remains nonpositive. 
			Hence, the rest of this paper is devoted to the computation of the spectrum (for small inelasticity and small space positions) of \eqref{defOpLinXV}.

			In order to avoid to deal with the free transport operator in differential form, we shall now use Fourier transform in space.
			More precisely, if we define the Fourier transform in $x$ of a function $ \varphi : \RR^d \to \RR$ as
			\[ \mathcal{F}_x(\varphi)(\xi) := \int_{\RR^d} e^{-i \xi \,\cdot \,x} \varphi(x) \, dx, \quad \forall \, \xi \in \RR^d, \]
			it is well know that
			\[ \mathcal F_x\left (\nabla g\right )(\xi) = i \, \xi \, \mathcal{F}_x(g)(\xi) .\]
			Then using the fact that  $\mathcal{L}_\alpha$ only acts on velocity variables and setting 
			\[ \gamma := \left (1 - \alpha\right ) \xi, \]
			we can write   \eqref{defOpLinXV} in (scaled) spatial Fourier variables as
			\begin{equation} 
			  \label{eqOpLinFour}
				- i \, (\gamma \cdot v) + \mathcal{L}_\alpha =: \LL_{(\alpha, \, \gamma)}.
			\end{equation}
			This operator is well defined on $L^1\left (m^{-1}\right )$, with domain $\domain(\LL_{\alpha, \, \gamma})= W_1^{2,1}\left (m^{-1}\right )$. 
			In this particular set of variable, the equation \eqref{eqScalFlucLin} finally reads
			\[ \frac{\partial g}{\partial t} = \LL_{\alpha, \, \gamma} \, g,\]
			and we see now the need to study the spectrum of the linear operator $\LL_{\alpha, \, \gamma}$ for small values of the variable $\gamma$.
			
			To finish with the definitions, let us denote by $N_1$ the kernel of the elastic operator $\mathcal{L}_1$. It is spanned by the elastic \emph{collisional invariants}, namely 
			\[  N_1 := \vect \{ F_1, \, v_i \, F_1 , |v|^2 \, F_1 : \, 1 \leq i \leq d \},\]
			where $F_1 = \mathcal M_{1,0,\bar T_1}$ and $T_1$ is the quasi-elastic equilibrium temperature \eqref{defElasticEquilibTemperature}.
			For $\alpha < 1$, the kernel $N_\alpha$ of the inelastic operator $\LL_\alpha$ is smaller, because of the lack of energy conservation; it is given by
			\[  N_\alpha := \vect \{F_\alpha, \, v_i \, F_\alpha : \, 1 \leq i \leq d \}.\]
			
	  \subsection{Functional Framework and Main Results}
			
			Let us present some functional spaces needed in the paper. We denote by $L_q^p$ for $p \in [1, +\infty)$ and $q \in [1, +\infty)$ the following weighted Lebesgue spaces:
			\begin{equation*}
				L^p_q = \left\{ f : \mathbb{R}^d \rightarrow \mathbb{R} \text{ measurable; }\|f\|_{L^p_q} := \int_{\mathbb{R}^d} |f(v)|^p \, \langle v\rangle^{p q} \, dv < \infty \right\},
			\end{equation*}
			where $\langle v\rangle := \sqrt{1+ |v|^2}$. The weighted $L^\infty_q$ is defined thanks to the norm
			\begin{equation*}
				\|f\|_{L^\infty_q} := \supess_{v \in \RR^d} \left ( |f(v)|  \, \langle v\rangle^{q}\right ).
			\end{equation*}
			Then, we denote for $s \in \NN$ by $W_q^{s,p}$ the weighted Sobolev space
			\begin{equation*}
				W_q^{s,p} := \left\{ f \in L^p_q; \|f\|_{W_q^{s,p}}^p := \sum_{|k| \leq s} \int_{\RR^d} \left |\partial^k f(v)\right |^p  \langle v\rangle^{p \,q} \, dv < \infty \right \}.
			\end{equation*}
		  The case $p=2$ is the Sobolev space $H^s_q := W^{s,2}_q$, which can also be defined thanks to Fourier transform by the norm
			\begin{equation*}
				\|f\|_{H_q^{s}}^2 := \left \| \mathcal F_v \left (f \, \langle \cdot \rangle^s \right )\right \|_{L^2_q}.
			\end{equation*}	
      We also need to define the more general weighted spaces $L^p(m^{-1} )$ and $W^{s,p}(m^{-1} )$, where $m$ is an exponential weight function given by \eqref{defExpWeightFunc} respectively by the norms
			\begin{gather*}
			  \|f\|_{L^p(m^{-1})}^p := \int_{\mathbb{R}^d} |f(v)|^p \, m^{-1}(v) \, dv, \\
			  \|f\|_{W^{s,p}(m^{-1})}^p := \sum_{|k| \leq s} \left \|\partial^k f\right \|_{L^p(m^{-1})}^p. 
      \end{gather*}	
      
      For the sake of completeness, let us finally state some notions about operators that we shall need in the following.
	    
	    \begin{definition}
				A \emph{closed} operator  $T$  defined on a Banach space $ X$ is said to be a
				\begin{itemize}
					\item \emph{Fredholm} operator of index $(\nullity(T),\deficiency(T))$ if the quantities $\nullity(T) := \dim(\ker T)$ (the \emph{nullity})  and $\deficiency(T) := \codim(\range(T))$ (the \emph{deficiency}) are finite;
					\item \emph{semi-Fredholm} operator if $\range(T)$ is closed and at least one of these two quantities are finite.
				\end{itemize}
				For such an operator, we define the
				\begin{itemize}
					\item \emph{resolvent set} $R(T) \subset \mathbb{C}$ and the \emph{resolvent operator} $\mathcal{R}(T, \zeta)$ as
						\begin{equation*}
							 R(T) := \{ \zeta \in \mathbb{C} : T - \zeta \text{ is invertible on $X$, of bounded inverse } \mathcal{R}(T, \zeta) \};
						\end{equation*}
					\item \emph{spectrum} $\Sigma(T)$ of $T$ as the (closed) set \[\Sigma(T) := R(T)^c;\]
					\item \emph{Fredholm} set $\mathcal{F}(T) \subset \mathbb{C}$  of $T$ as
						\begin{equation*}
							\mathcal{F}(T) := \{\zeta \in \mathbb{C} : T - \zeta \text{ is Freholm} \};
						\end{equation*}
					\item \emph{semi-Fredholm} set $\mathcal{SF}(T) \subset \mathbb{C}$  of $T$ as
						\begin{equation*}
							\mathcal{SF}(T) := \{\zeta \in \mathbb{C} : T - \zeta \text{ is semi-Freholm} \};
						\end{equation*}
					\item \emph{essential spectrum} $\Sigma_{ess}(T)$ of $T$ as the set  \[\Sigma_{ess}(T) := \mathcal{SF}(T)^c \subset \Sigma(T);\]
					\item \emph{discrete spectrum}  $\Sigma_d(T)$ of $T$ as the set  \[\Sigma_d(T) := \Sigma(T) \setminus \Sigma_{ess}(T).\]
				\end{itemize}
			\end{definition}

			The two main results of this paper are the following Theorems. We first localize the spectrum of the operator $\LL_{\alpha, \gamma}$ in the space $L^1\left (m^{-1}\right )$, generalizing to this space the classical $L^2$ result of Nicolaenko \cite{Nicolaenko:71} (see also the chapter 7 of the book \cite{CIP:94} of Cercignani, Illner and Pulvirenti).	Let us denote by $\Delta_x$ for $x \in \RR$ the half-plane 
	    \[ \Delta_x := \{ \zeta \in \mathbb{C} : \Re e \, \zeta \geq x \}.\]
	    We first prove the following result (which has been summarized in Figure \ref{figSpect}).

		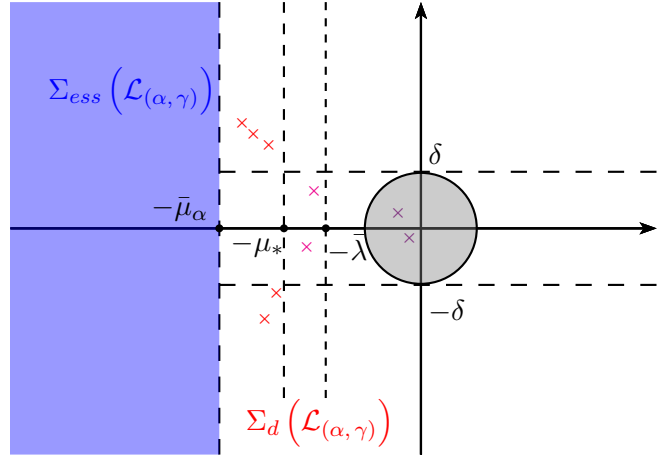
\begin{figure}
			\begin{center} 
				\psset{xunit=0.5cm,yunit=0.5cm,algebraic=true,dotstyle=o,dotsize=3pt 0,linewidth=0.8pt,arrowsize=3pt 2,arrowinset=0.25}
				\begin{pspicture*}(-10,-6)(8.21,6)
          \psaxes[xAxis=true,yAxis=true,labels=none,Dx=2,Dy=2,ticksize=0pt 0]{->}(.8,0)(-10,-6)(7,6)
					\psline[linestyle=dashed,dash=6pt 6pt](-4.5,-6)(-4.5,6)
					\psline[linestyle=dashed,dash=4pt 4pt](-2.8,-4.5)(-2.8,6)
					\psline[linestyle=dashed,dash=3pt 3pt](-1.7,-4.5)(-1.7,6)
    			\psline[linestyle=dashed,dash=6pt 6pt](-4.5,1.5)(7,1.5)
					\psline[linestyle=dashed,dash=6pt 6pt](-4.5,-1.5)(7,-1.5)
					\rput[bl](1.,1.7){$\delta$}
					\rput[bl](1.,-2.5){$-\delta$}
					\rput[bl](-9.,3){\blue{$ \Sigma_{ess} \left (\mathcal L_{(\alpha, \, \gamma)}\right ) $}}
					\pspolygon[linestyle=none,fillstyle=solid,fillcolor=blue,opacity=0.4](-12,-6)(-4.5,-6)(-4.5,6)(-12,6)
					\psdots[dotstyle=*](-4.5,0)
					\rput[br](-4.8,0.2){$-\bar \mu_{\alpha}$}
					\psdots[dotstyle=*](-2.8,0)
					\rput[tr](-2.8,-0.15){$-\mu_*$}
					\psdots[dotstyle=*](-1.7,0)
					\rput[tl](-1.65,-0.15){$-\bar\lambda$}
					\psdots[dotsize=4pt 0, dotstyle=x, linecolor=red](-3.9,2.8)
					\psdots[dotsize=4pt 0, dotstyle=x, linecolor=red](-3.2,2.2)
					\psdots[dotsize=4pt 0, dotstyle=x, linecolor=red](-3.3,-2.4)
					\psdots[dotsize=4pt 0, dotstyle=x, linecolor=red](-3.6,2.5)
					\psdots[dotsize=4pt 0, dotstyle=x, linecolor=red](-3,-1.7)
					\psdots[dotsize=4pt 0, dotstyle=x, linecolor=magenta](-2,1)
					\psdots[dotsize=4pt 0, dotstyle=x, linecolor=magenta](-2.2,-.5)
					\psdots[dotsize=4pt 0, dotstyle=x, linecolor=violet](.5,-.25)
					\psdots[dotsize=4pt 0, dotstyle=x, linecolor=violet](.2,.4)
					\pscircle[fillstyle=solid,fillcolor=gray,opacity=0.4](.8,0){0.75}
					\rput[bl](-3.8,-5.8){\red{$\Sigma_d\left (\mathcal L_{(\alpha, \, \gamma)}\right ) $}}				
				\end{pspicture*}

			\end{center}
			\label{figSpect}
			\caption[Localization of the eigenvalues of $\mathcal{L}_{\alpha, \, \gamma}$ for small frequencies $\gamma$]{Localization of the eigenvalues of $\mathcal{L}_{\alpha, \, \gamma}$ for $|\gamma| \leq \gamma_0(\delta)$. }
		\end{figure}
				
			\begin{theorem}
			  \label{thmLocalizSpect}
	      Let $\alpha \in (\alpha_1, 1]$,  for a constructive constant $0< \alpha_1<1$. There exists a constructive constant $\bar \mu_{\alpha} > 0$ such that the essential spectrum of the operator  $\LL_{(\alpha, \, \gamma)}$ in $W_1^{2,1}\left (m^{-1}\right )$ is contained on the half-plane  $\Delta_{-\bar \mu_{\alpha}}^c$:
	      \[ \Sigma_{ess}\left (\LL_{(\alpha, \, \gamma)}\right ) \subset \Delta_{-\bar \mu_{\alpha}}^c.\]
	      The remaining part of its spectrum is composed of discrete eigenvalues. Their behavior for small frequencies $\gamma$ is the following.
	      
	      Let us fix $\delta > 0$. There exist some constants $0 < \bar \lambda < \mu_* < \mu_\alpha$ and  $\alpha_2 \in(\alpha_1,1]$ such that if $\alpha \in (\alpha_2,1]$ there exists a nonnegative number $\gamma_0$ such that for all $|\gamma| \leq \gamma_0$, if $\lambda \in \Sigma_d(\mathcal{L}_{(\alpha, \, \gamma)})$, then
				\begin{gather*}
					 \lambda \in \Delta_{- \mu_*} \Rightarrow \left | \Im m \, \lambda \right | \leq \delta; \\
					 \lambda \in \Delta_{-\frac{\bar \lambda}{2}} \Rightarrow | \lambda| \leq \delta. 
				\end{gather*} 			  
			\end{theorem}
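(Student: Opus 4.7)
The proof will proceed in two independent steps: a Weyl-type argument to localize the essential spectrum, and Kato's holomorphic perturbation theory applied around the reference operator $\LL_1 = \LL_{(1,0)}$ to control the discrete part.

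\textbf{Essential spectrum.} I would split
\[ \LL_{(\alpha,\gamma)} = \mathcal{A}_{(\alpha,\gamma)} + \mathcal{B}_{\alpha}, \qquad \mathcal{A}_{(\alpha,\gamma)}\,g := -\bigl(\nu_\alpha(v) + i\,\gamma\cdot v\bigr)\,g + (1-\alpha)\,\Delta_v g, \qquad \mathcal{B}_{\alpha} := \LL_\alpha^+ - \LL^*. \]
The pointwise lower bound $\nu_\alpha(v) \geq \nu_{0,\alpha}(1+|v|)$ from \eqref{nuCrois}, combined with a dissipativity estimate for $\mathcal{A}_{(\alpha,\gamma)}$ in $L^1(m^{-1})$ (integration against $\sgn(g)\,m^{-1}$, using that the choice $s<1$ in \eqref{defExpWeightFunc} ensures that the transport and diffusion contributions from the weight are dominated by $\nu_\alpha$ at infinity), shows that $\mathcal{A}_{(\alpha,\gamma)}$ generates a $C_0$-semigroup of type $-\bar\mu_\alpha$ on $W_1^{2,1}(m^{-1})$, hence $\Sigma(\mathcal{A}_{(\alpha,\gamma)}) \subset \Delta_{-\bar\mu_\alpha}^c$. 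Compactness of $\mathcal{B}_\alpha$ on $L^1(m^{-1})$ is due to \cite{Mouhot:2006} for $\alpha=1$ (Carleman-type decomposition of $\LL_1^+$ plus a convolution), and is inherited for $\alpha$ close to $1$ via the Schwartz-norm continuity of $\alpha\mapsto F_\alpha$ proved in \cite{mischler:20092}. Weyl's theorem on stability of the essential spectrum under compact perturbation then yields $\Sigma_{ess}(\LL_{(\alpha,\gamma)}) \subset \Delta_{-\bar\mu_\alpha}^c$.

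\textbf{Discrete spectrum.} The reference operator $\LL_1$ is known to have $0$ as an isolated eigenvalue of algebraic multiplicity $d+2$ with eigenspace $N_1$, the rest of its discrete spectrum being contained in some $\Delta_{-\mu_0}^c$ with constructive $\mu_0>0$. Fix $\delta$ and choose $0 < \bar\lambda < \mu_* < \mu_0 \wedge \bar\mu_\alpha$. The perturbation
\[ \LL_{(\alpha,\gamma)} - \LL_1 = -i(\gamma\cdot v) + (1-\alpha)\Delta_v + \bigl(\LL_\alpha - \LL_1 - (1-\alpha)\Delta_v\bigr) \]
should be checked to be $\LL_1$-relatively bounded on $L^1(m^{-1})$ with relative bound $O(|\gamma|+(1-\alpha))$: indeed $\nu_1(v) \sim \langle v\rangle$ dominates $|\gamma\cdot v|$ and, on $W_1^{2,1}(m^{-1})$ with $s<1$, the Laplacian term as well, while $\LL_\alpha - \LL_1 - (1-\alpha)\Delta_v$ is handled by Taylor-expanding in $1-\alpha$ with the continuity of $\alpha\mapsto F_\alpha$. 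Kato's holomorphic theory then gives norm convergence of the Riesz spectral projectors along any contour $\Gamma$ lying in $R(\LL_1)$. Applying this both to the small circle $\Gamma_0 := \{|\zeta|=\delta\}$ around $0$, and to a large contour enclosing $\Delta_{-\mu_*}\cap R(\LL_1)$ (closed at large $|\Im\zeta|$ by a uniform resolvent bound, see below), one deduces that for $|\gamma|\leq\gamma_0$ and $\alpha$ close enough to $1$ the part of $\Sigma_d(\LL_{(\alpha,\gamma)})$ inside $\Delta_{-\mu_*}$ consists of at most $d+2$ eigenvalues (counted with multiplicity) all lying in $\{|\zeta|\leq\delta\}$; the two conclusions of the theorem are then immediate.

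\textbf{Main obstacle.} The hardest step is closing the perturbative contour at large $|\Im\zeta|$ inside the strip $\{-\mu_*\leq\Re\zeta\leq 0\}$: one needs a uniform estimate $\|\mathcal{R}(\LL_{(\alpha,\gamma)},\zeta)\|_{\mathcal{B}(L^1(m^{-1}))} \leq C$ for $|\Im\zeta|\geq R$, independent of $(\alpha,\gamma)$, in order to rule out accumulation or escape of eigenvalues along horizontal strips. In the Hilbert-space framework of Ellis--Pinsky this is comparatively straightforward, but in the non-reflexive exponentially-weighted $L^1(m^{-1})$ setting it forces a careful adaptation of the hypodissipativity/factorization machinery of Mischler and Mouhot. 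A secondary—but essential, for the subsequent dispersion expansion—difficulty is keeping the constants $\bar\mu_\alpha$, $\mu_*$, $\bar\lambda$ and the thresholds $\alpha_1, \alpha_2, \gamma_0$ constructive in $\alpha$ and in the cross-section $b$.
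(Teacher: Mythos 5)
Your treatment of the essential spectrum is essentially the paper's: the same splitting into a dissipative ``Schr\"odinger-like'' part $-\left[\nu_\alpha(v)+i(\gamma\cdot v)\right]\id+(1-\alpha)\Delta_v$ plus a (relatively) compact remainder $2\,\Q_\alpha^+(F_\alpha,\cdot)-F_\alpha L(\cdot)$, followed by a Banach-space Weyl theorem. Two small caveats: in $L^1(m^{-1})$ one must use the semi-Fredholm-set version of Weyl's theorem (stability of $\mathcal{SF}$ under relatively compact perturbation), which is why the paper defines $\Sigma_{ess}$ ``\`a la Kato''; and localizing $\Sigma_{ess}$ does not by itself show that the complement consists of \emph{discrete eigenvalues} --- the paper still has to argue that $\nullity$ and $\deficiency$ are constant equal to $0$ on the connected Fredholm set outside a countable set, by exhibiting a ray $[r_0,+\infty)$ in the resolvent set via the $A_\delta-B_{\alpha,\delta}$ factorization. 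Your write-up omits this step.

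The discrete-spectrum part is where the proposal breaks down. You propose to treat $\LL_{(\alpha,\gamma)}-\LL_1=-i(\gamma\cdot v)+(1-\alpha)\Delta_v+\bigl(\LL_\alpha-\LL_1-(1-\alpha)\Delta_v\bigr)$ as an $\LL_1$-relatively bounded perturbation with relative bound $O(|\gamma|+(1-\alpha))$. For the transport term this is correct (it is exactly the paper's Remark \ref{remAnalyticEig}, since $\nu_1(v)\gtrsim 1+|v|$ is controlled by the graph norm of $\LL_1$). But $(1-\alpha)\Delta_v$ is a \emph{singular} perturbation of $\LL_1$: the elastic linearized operator is an integral operator minus a multiplication operator, its graph norm controls one moment but no derivatives, so there is no estimate of the form $\|\Delta_v h\|_{L^1(m^{-1})}\le a\|h\|+b\|\LL_1 h\|$, and multiplying by the small factor $1-\alpha$ does not produce a small \emph{relative} bound --- highly oscillatory $h$ defeats any such inequality. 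Kato's type-(A) holomorphic family argument therefore does not apply in the $\alpha$ variable, and the norm convergence of the Riesz projectors you invoke is unjustified. Moreover, the uniform resolvent bound at large $|\Im m\,\zeta|$ that you correctly flag as the ``main obstacle'' is precisely the content you would need to supply, and it is left unproven. The paper circumvents both difficulties by never separating the Laplacian from the dissipative part: it keeps $(1-\alpha)\Delta_v$ inside $D_{(\alpha,\gamma)}$, uses the explicit decay $\|(\lambda-D_{(\alpha,\gamma)})^{-1}\|\lesssim(|\lambda|-\nu_{0,\alpha})^{-1}$ together with the compactness of $\LL_\alpha^c$ in a contradiction/compactness argument (Proposition \ref{propLocalisDiscSpect}): a putative sequence of eigenvalues with $|\Im m\,\lambda_n|>\delta$ and $\gamma_n\to 0$ is shown first to have bounded imaginary parts, then to converge to an eigenvalue of the space-homogeneous operator $\LL_\alpha$, which is known from \cite{mischler:20091,Mouhot:2006} to be (close to) real --- a contradiction. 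If you want to keep a perturbative flavour, you would have to perturb around $\LL_\alpha$ (not $\LL_1$) in the variable $\gamma$ only, and import the $\alpha$-dependence of the spectrum of $\LL_\alpha$ from the Mischler--Mouhot results rather than from Kato's theory.
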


			We then give a first order (in $\gamma$ and $\alpha$) Taylor expansion of the eigenvalues of  $\LL_{\alpha, \gamma}$, which generalizes the results of Ellis and Pinsky \cite{ellis:1975} and Mischler and Mouhot \cite{mischler:20092}. Notice that this result also contains a part of the analysis led by Brilliantov and P\"oschel in the chapter 25 of their book \cite{brilliantov:2004} about the stable and unstable modes of the fluid approximation of the granular gases equation, namely that the energy eigenvalue is proportional to the inelasticity.
			
			 Before stating the result, let us define the eigenvalue problem we want to deal with: finding a triple $(\lambda, \gamma,h)$ such that
			\begin{equation}
			  \label{pbEigenValueLambdaGammaRho}
			  \left (- i(\gamma \cdot v) + \mathcal{L}_\alpha \right )h = \lambda \, h,
			\end{equation}
		  for $\gamma \in \RR^d$, $\lambda \in \CC$ and $h \in L^1\left (m^{-1}\right )$.

			\begin{theorem}
			  \label{thmTaylorExpSpectrum}
			  There exist $\alpha_*  \in(\alpha_2,1]$, some open sets $U_1 \times U_2\subset\mathbb{R} \times \mathbb{C} $, neighborhood of $(0,0)$, and functions 
			  \begin{equation*}
			    \left \{ \begin{aligned}
			      & \lambda^{(j)}: U_1 \times (\alpha_*,1] \to U_2 & \forall \, j \in \{-1,\ldots,d\}, \\
			      & h^{(j)}: U_1 \times \SSS^{d-1} \times (\alpha_*,1] \to L^1\left (m^{-1}\right ) & \forall \, j \in \{-1,\ldots,d\},
			    \end{aligned} \right.
			  \end{equation*}
			  such that
			  \begin{enumerate}[leftmargin=*]
			    \item The triple $\left (\rho \, \omega,  \,\lambda^{(j)}(\rho, \alpha), \, h^{(j)}(\rho, \omega, \alpha\right )$ is solution to the eigenvalue problem \eqref{pbEigenValueLambdaGammaRho}, for all $\alpha \in (\alpha_*,1]$, $\rho \in U_1$, $\omega \in \SSS^{d-1}$, $j \in \{-1,\ldots,d\}$;
			    \medskip
			    \item The eigenvalue $\lambda^{(j)}$ is analytic on $U_1 \times (\alpha_*,1]$ and verifies
						\begin{equation*}
							\left\{ \begin{aligned} 
								& \lambda^{(j)}(0,1) = 0, && \forall \, j \in \{-1,\ldots,d\}, \\
								& \frac{\partial \lambda^{(j)}}{\partial \rho}(0,1) = j \, i \, \sqrt{\bar T_1 + \frac{2\bar T_1^2}{d}}, && \forall \, j \in \{-1,0,1\}, && \frac{\partial \lambda^{(j)}}{\partial \rho}(0,1) = 0, && \forall \, j \in \{2,\ldots,d\},\\
								& \frac{\partial^2 \lambda^{(j)}}{\partial \rho^2}(0,1) < 0 , && \forall \, j \in \{-1,\ldots,d\}, \\
								& \frac{\partial \lambda^{(0)}}{\partial \alpha}(0,1) = - \frac{3}{\bar T_1}, && &&\frac{\partial \lambda^{(j)}}{\partial \alpha}(0,1) = 0, && \forall \, j \in \{-1,1,\ldots,d\};
							\end{aligned}\right.
						\end{equation*}		
					\medskip	
					\item For $\alpha \in (\alpha_*,1]$, if a triple $(\rho \omega,\lambda, h)$ is solution to the problem \eqref{pbEigenValueLambdaGammaRho} for $(\rho, \lambda) \in U_1 \times U_2$, then necessarily $\lambda = \lambda^{(j)}$ for some $j\in \{-1,\ldots,d\}$;
					\medskip 
					\item For $j \in \{-1,0,1\}$, $\alpha \in (\alpha_*,1]$ and $\omega \in \SSS^{d-1}$, the function $v \mapsto h^{(j)}(\rho\, \omega, \alpha)(v)$ depends only on $|v|$ and $v \cdot \omega$.
			  \end{enumerate}
			\end{theorem}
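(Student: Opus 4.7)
The plan is to set up this as an analytic perturbation problem \emph{à la} Kato around the base point $(\rho,\alpha)=(0,1)$, at which $\LL_{\alpha,\gamma}$ degenerates to the elastic linearized operator $\LL_1$ on $L^1(m^{-1})$. By Theorem~\ref{thmLocalizSpect} (at $\gamma=0$, $\alpha=1$) combined with the classical spectral analysis of $\LL_1$ in the exponentially-weighted $L^1$ setting (Mouhot \cite{Mouhot:2006}), zero is an isolated eigenvalue of $\LL_1$ of geometric and algebraic multiplicity $d+2$, with eigenspace $N_1 = \vect\{F_1, v_i F_1, |v|^2 F_1\}$, and the rest of $\Sigma(\LL_1)$ lies in a half-plane $\Delta_{-\mu_*}^c$. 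The two-parameter family $(\rho,\alpha)\mapsto \LL_{(\alpha,\rho\omega)} = \LL_\alpha - i\rho(\omega\cdot v)$ is an analytic (even affine in $\rho$) perturbation of $\LL_1$ in the sense of Kato, once we verify that $\alpha\mapsto\LL_\alpha$ is analytic as a bounded perturbation on the graph norm of $\LL_1$; this is where one invokes the quantitative stability estimates of Mischler--Mouhot \cite{mischler:20091,mischler:20092} for $\Q_\alpha$ and $F_\alpha$ with respect to $\alpha$. Fixing $\omega\in\SSS^{d-1}$, the Riesz spectral projector
\begin{equation*}
P(\rho,\alpha) \;:=\; -\frac{1}{2i\pi}\oint_{|\zeta|=r}\mathcal R(\LL_{(\alpha,\rho\omega)},\zeta)\,d\zeta,
\end{equation*}
taken on a small circle around $0$, is then analytic in $(\rho,\alpha)$ on a neighborhood of $(0,1)$, of constant rank $d+2$.

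Next I would reduce the problem to a $(d+2)\times(d+2)$ matrix eigenvalue problem by working in the invariant subspace $\range P(\rho,\alpha)$. Choose a basis of $N_1$ that splits into a \emph{longitudinal block} adapted to $\omega$, namely $\{F_1,\,(\omega\cdot v) F_1,\,|v|^2 F_1\}$, and a \emph{transverse block} $\{v_i F_1 : \omega\cdot e_i=0\}$. The reduced matrix $M(\rho,\alpha) := P \LL_{(\alpha,\rho\omega)} P$ vanishes at $(0,1)$. Its $\rho$-derivative at $(0,1)$ is the projection of $-i(\omega\cdot v)$ onto $N_1$, which is block-diagonal: the transverse block is $0$ (since $\omega\cdot v$ is odd in the transverse direction and $v_i F_1$ pair up to produce off-diagonal moments that vanish by parity), while the longitudinal block is exactly the classical Euler sound-wave matrix. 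Diagonalizing this $3\times 3$ block reproduces the standard computation in \cite{ellis:1975}, giving eigenvalues $0, \pm i\sqrt{\bar T_1+2\bar T_1^2/d}$ and hence $\partial_\rho\lambda^{(j)}(0,1)$ as stated. For the $\alpha$-derivative at $\rho=0$, only the longitudinal block is affected; one computes $\partial_\alpha\LL_\alpha|_{\alpha=1}$ by differentiating the identity $\Q_\alpha(F_\alpha,F_\alpha)+(1-\alpha)\Delta_v F_\alpha = 0$ in $\alpha$ and using the weak form \eqref{defBoltzweak} tested against $1$, $v$, $|v|^2$. The mass and momentum components of the restriction vanish (microscopic conservation), while the energy component produces the prescribed value $-3/\bar T_1$ through the balance relation \eqref{defBalanceEq} and \eqref{defElasticEquilibTemperature}; this is precisely the energy branch $\lambda^{(0)}$ leaving zero as inelasticity turns on.

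The analyticity of each branch $\lambda^{(j)}$ then follows from the Kato--Rellich theorem applied to $M(\rho,\alpha)$, provided the eigenvalues of $\partial_\rho M(0,1)$ are simple — which they are, because the three longitudinal eigenvalues $\{0,\pm ic\}$ are distinct and the $d-1$ transverse ones coincide at $0$ but are protected by rotational symmetry (so they stay together as a group and branch analytically). For the second-order derivatives $\partial_\rho^2\lambda^{(j)}(0,1)$, I would apply the standard Kato second-order formula
\begin{equation*}
\partial_\rho^2\lambda^{(j)}(0,1) \;=\; -2\,\bigl\langle \widetilde h^{(j)},\, i(\omega\cdot v)\,\LL_1^{-1}(I-P_0)\bigl[i(\omega\cdot v)\,h^{(j)}_0\bigr]\bigr\rangle,
\end{equation*}
with $h^{(j)}_0$ the zeroth-order eigenvector and $\widetilde h^{(j)}$ its dual; strict negativity is equivalent to the positivity of the Boltzmann viscosity/heat-conductivity quadratic forms, obtained by pairing $(I-P_0)[i(\omega\cdot v) h^{(j)}_0]$ with its $\LL_1^{-1}$-image and using the $L^2(F_1^{-1})$ coercivity of $-\LL_1$ on $N_1^\perp$ (transferred to $L^1(m^{-1})$ by the regularization estimates already present in the paper). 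Uniqueness in statement~(3) is built into Kato's theorem: the spectrum of $\LL_{(\alpha,\rho\omega)}$ inside $U_2$ coincides with the eigenvalues of $M(\rho,\alpha)$, so any solution $(\rho\omega,\lambda,h)$ must have $\lambda$ among the $d+2$ branches. Finally, statement~(4) is a symmetry argument: the operator $\LL_\alpha$ commutes with rotations in velocity, and $-i\rho(\omega\cdot v)$ is invariant under rotations fixing $\omega$ and sending $|v|$ to itself, so one can restrict the eigenproblem for the three longitudinal branches to the rotation-invariant subspace of functions depending only on $|v|$ and $\omega\cdot v$, in which the relevant eigenvalues remain simple.

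The hardest step, in my view, is not the formal perturbation calculation but justifying Kato's framework in the Banach setting $L^1(m^{-1})$: one must show that $\alpha\mapsto\LL_\alpha$ and $\rho\mapsto -i\rho(\omega\cdot v)$ are analytic families of closed operators of type (A) with a common core (e.g.\ $W^{2,1}_1(m^{-1})$), and that the unperturbed resolvent $\mathcal R(\LL_1,\zeta)$ is compact on the relevant contour so that the spectral projector $P(\rho,\alpha)$ has constant finite rank. This is where the absence of a Hilbert structure — the novelty of the paper — forces the replacement of the Ellis--Pinsky $L^2(\mathcal M^{-1})$ self-adjoint machinery by the compactness/enlargement-of-functional-space strategy of Mouhot--Mischler, and it is the only step where genuinely new ingredients (beyond a verbatim adaptation of \cite{ellis:1975}) are required.
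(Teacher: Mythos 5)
Your route is genuinely different from the paper's: you work with the Riesz spectral projector $P(\rho,\alpha)$ and the reduced $(d+2)\times(d+2)$ matrix $M(\rho,\alpha)=P\,\LL_{(\alpha,\rho\omega)}\,P$ in the spirit of Kato, whereas the paper follows Ellis--Pinsky by rewriting the eigenvalue problem as $h=\Psi^{-1}_{(\lambda,\gamma,\alpha)}\Phi_{(\lambda,\gamma,\alpha)}\nu_\alpha^{-1/2}\Pi\,\nu_\alpha^{1/2}h$, projecting onto the elastic collisional invariants, and solving the resulting Gram determinant $D(\lambda,\rho,\alpha)=0$ (and the scalar transverse relation $D_\omega=0$) by the implicit function theorem after the desingularizing substitution $z=\lambda/\rho$, $s=1-\alpha$, with a winding-number argument for uniqueness. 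Both reductions are legitimate, and your uniqueness argument via the rank of the projector is arguably cleaner than the paper's Rouch\'e-type count. However, there are two concrete gaps.

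First, your simplicity claim for $\partial_\rho M(0,1)$ is false as stated: the energy branch $\lambda^{(0)}$ also has vanishing first $\rho$-derivative, so the eigenvalue $0$ of $\partial_\rho M(0,1)$ has multiplicity $d$ (one longitudinal energy mode plus $d-1$ transverse modes), not multiplicity $d-1$. Simplicity of the first-order reduced matrix is therefore \emph{not} available to conclude analyticity of the branches; in a non-self-adjoint Banach setting a degenerate splitting generically produces Puiseux expansions. The fix is to establish that the \emph{entire family} $M(\rho,\alpha)$ (not merely its derivative at the base point) block-diagonalizes into a longitudinal $3\times3$ block and a transverse block that is scalar by invariance under rotations fixing $\omega$; inside the longitudinal block the first-order eigenvalues $0,\pm i\sqrt{\bar T_1+2\bar T_1^2/d}$ are distinct and Kato's reduction applies, while the transverse block gives a single analytic branch of multiplicity $d-1$. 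This is exactly what the paper's decomposition into the closed system \eqref{eqX0}--\eqref{eqXomega}--\eqref{eqXd1} and the scalar relation \eqref{eqXiFinal} accomplishes; you gesture at the block structure but only assert it for the derivative, which is not enough.

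Second, your framework requires $\alpha\mapsto\LL_\alpha$ to be an analytic (type (A)) family, but the estimates you invoke from Mischler--Mouhot give only H\"older continuity of $\Q_\alpha^+$ in $\alpha$ (Proposition \ref{propHoldContOp}, with modulus $r^{1/(3+4s)}$) and a H\"older rate for $F_\alpha\to F_1$ (Proposition \ref{propCvMaxwProfil}). The two-parameter Kato machinery therefore cannot be applied verbatim in the $\alpha$ direction. The paper circumvents this: the $\rho$-analyticity comes from the relative boundedness of $(\omega\cdot v)$ with respect to $\LL_\alpha$ (Remark \ref{remAnalyticEig}), while the value $\partial_\alpha\lambda^{(0)}(0,1)=-3/\bar T_1$ is obtained not from a perturbation formula but from a direct moment identity, integrating the eigenvalue equation against $|v|^2$, dividing by $1-\alpha$, and using the energy dissipation functional \eqref{defDissipFunctional} together with $\mathcal E\left(h^{(0)}_{(0,1)}\right)=2dc_0\bar T_1^2$ and $D\left(F_1,h^{(0)}_{(0,1)}\right)=\tfrac32 dc_0\bar T_1$. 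Your plan of differentiating $\Q_\alpha(F_\alpha,F_\alpha)+(1-\alpha)\Delta_vF_\alpha=0$ in $\alpha$ presupposes differentiability of $\alpha\mapsto F_\alpha$ that the available estimates do not provide; you would need to reorganize that computation along the paper's lines to make it rigorous.
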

			
			\begin{remark}
			  We notice in this result that the eigenvalues depend only on $|\gamma|$ and not on $\gamma$ itself. This is due to the rotational invariance of the linearized collision operator.
			  However, this is not the case of the eigenvectors, which can also depend on the angular coordinates.
 			\end{remark}
			
			\begin{remark}
			  As a consequence of this result, we can write for $\left (\rho, \alpha\right ) \in U_1 \times (\alpha_*,1]$
			  \[ \lambda^{(j)}(\rho, \alpha) = i \lambda_1^{(j)} \rho -\lambda_2^{(j)} \rho^2  - e_1^{(j)}\left (1-\alpha\right ) + \mathcal{O}\left ( \rho^2 + (1-\alpha)^2\right ),\]
			  for explicit (see Section \ref{subHighOrder}) constants $\lambda_1^{(j)}\in \RR$, $\lambda_2^{(j)}\in \RR_+$  and $e_1^{(j)}\in \RR_+$.
			  In particular, we obtain that for small space frequencies and small values of the inelasticity, the spectrum of the linear operator remains at the left of the imaginary axis in the complex plane.
			  This phenomenon has at least two  important consequences on the behavior of the solution to the granular gases equation:
			  \begin{itemize}
			    \item The solutions to the linear collision equation \eqref{eqScalFlucLin} are $L^1\left (m^{-1}\right )$ stable;
			    \medskip
			    \item The clustering phenomenon (see \eg \cite{brilliantov:2004}) is not possible for quasi-elastic collisions $\alpha \sim 1$, or for systems with a small typical length scale.
			  \end{itemize}
			\end{remark}
			
		\subsection{Method of Proof and Plan of the Paper}
		  \label{subMethProof}
		  
		  The proof of Theorem \ref{thmLocalizSpect}, concerning the rough\footnote{By rough, we mean more precisely that this result do not establish whether or not the eigenvalues can cross the vertical axis of the complex plane.} localization of the spectrum of the linear operator $\LL_{(\alpha, \, \gamma)}$ is given in Section \ref{secLocaliz}, and can be summarized as follows:
	  	\begin{itemize}
		    \item We first decompose this operator as a sum of compact and Schr\"odinger-like operators:
		      \begin{align*}
	            \mathcal L_{(\alpha, \, \gamma)} \, h & = -\left [i \, (\gamma \cdot v) + \nu_\alpha(v) \right ] h + (1-\alpha) \Delta_v h +  2 \, \Q_\alpha^+\left ( F_\alpha, h \right ) - F_\alpha \, L(h) \\
	             & = D_{(\alpha, \, \gamma)} h + \LL_\alpha^c h.
	          \end{align*}
		    \item We then compute the spectrum in $L^1$ of the Schr\"odinger-like part $ D_{(\alpha, \, \gamma)}$ and apply  a Banach variant of Weyl's Theorem (found \eg in \cite{kato:1966}) about the stability of the essential spectrum under relatively compact perturbation
		      \[ \Sigma_{ess} \left ( \mathcal L_{(\alpha, \, \gamma)}\right ) = \Sigma_{ess} \left (D_{(\alpha, \, \gamma)}\right ). \]
		    \item Once the essential spectrum has been localized, we take advantage of some space homogeneous coercivity and spectral gap estimates (proven in \cite{mischler:20091}) to establish the existence of the eigenvalues.
		    \smallskip
		    \item We finally combine some information about the asymptotic ($\alpha \to 1$) behavior of the space homogeneous eigenvalues (also taken from \cite{mischler:20091}) and the decay properties of the semi-group of the operator $D_{(\alpha, \, \gamma)}$ to finally give a rough localization of the spectrum.
		  \end{itemize}
		  
		  The proof of Theorem \ref{thmTaylorExpSpectrum} concerning the Taylor expansion of the eigenvalues of  $\LL_{(\alpha, \, \gamma)}$ is given in Section \ref{secEigTaylor}. Up to a certain extent. this proof is a generalization of Ellis \& Pinsky's arguments for \cite{ellis:1975}, namely: 
		  \begin{itemize}
		    \item We start by reformulating the eigenvalue problem  \eqref{pbEigenValueLambdaGammaRho} as a functional equation, using bounded operators:
		      \[ \eqref{pbEigenValueLambdaGammaRho} \Longleftrightarrow \text{Finding } (\lambda, \gamma,h) \text{ s.t. } h = \Psi_{(\lambda,\gamma, \, \alpha)}^{-1} \Phi_{(\lambda, \gamma, \, \alpha)} \nu_\alpha^{-1/2} \Pi \,\nu_\alpha^{1/2} h, \]
		      where $\Pi$ is a projection operator, $\Phi$ a ``multiplication'' operator, and $\Psi$ a ``small'' perturbation of the identity.
		    \item We then project this new problem onto the space of \emph{elastic} collisional invariants, allowing to rewrite completely \eqref{pbEigenValueLambdaGammaRho} as a finite dimensional system of linear equations of the form
					\begin{equation*} 
						\left (A_{(\lambda,\gamma, \alpha)} - \id\right ) X_{(\lambda,\gamma, \alpha)} = 0,
					\end{equation*}
					for a non-invertible square matrix $A$.
				\smallskip
		    \item We finally solve this system of equation taking advantage of some elastic and space homogeneous techniques, from both \cite{ellis:1975} and \cite{mischler:20091}.
		  \end{itemize}

  \section{Localization of the Spectrum}
    \label{secLocaliz}
  
    In this section, we shall give a rough localization of the spectrum of the linearized collision operator $\LL_\alpha$, proving Theorem \eqref{thmLocalizSpect}.

    \subsection{Geometry of the Essential Spectrum}
    
      We start by describing the ``easy part'', namely the essential spectrum. As we have to deal with the Banach space $L^1\left (m^{-1}\right )$, we cannot apply directly the classical Weyl's Theorem about the stability of the spectrum under relatively compact perturbations, because of the lack of Hilbertian structure. We shall rather apply the more general version stating only the stability of the semi-Fredholm set, which is well suited for our definition of the essential spectrum.
      
	    \begin{proposition}
	      \label{propLocalisEssSpect}
	      Let $\alpha \in (\alpha_0, 1]$, where $\alpha_0$ is defined in Lemma \ref{lemVracProp}. There exists a constructive constant $\bar \mu_{\alpha} > 0$ such that the essential spectrum of the operator  $\LL_{(\alpha, \, \gamma)}$ in $W_1^{2,1}\left (m^{-1}\right )$ is contained on the half-plane  $\Delta_{-\bar \mu_{\alpha}}^c$:
	      \[ \Sigma_{ess}\left (\LL_{(\alpha, \, \gamma)}\right ) \subset \Delta_{-\bar \mu_{\alpha}}^c.\]
	      The remaining part of its spectrum is composed of discrete eigenvalues.
	    \end{proposition}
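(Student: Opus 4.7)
The plan is to implement the three-step strategy sketched in Section~\ref{subMethProof}. Decompose
\[ \LL_{(\alpha,\gamma)} = D_{(\alpha,\gamma)} + \LL_\alpha^c, \]
with $D_{(\alpha,\gamma)}h := -[i(\gamma\cdot v) + \nu_\alpha(v)]h + (1-\alpha)\Delta_v h$ and $\LL_\alpha^c h := 2\,\Q_\alpha^+(F_\alpha,h) - F_\alpha L(h)$, both defined on $W_1^{2,1}(m^{-1})$. I would then (i) locate $\Sigma(D_{(\alpha,\gamma)})$ inside $\Delta_{-\bar\mu_\alpha}^c$ for some constructive $\bar\mu_\alpha>0$, (ii) show that $\LL_\alpha^c$ is $D_{(\alpha,\gamma)}$-compact on $W_1^{2,1}(m^{-1})$, and (iii) conclude by the Banach-space variant of the Weyl perturbation theorem~\cite[Thm.~IV.5.35]{kato:1966}, which asserts that the semi-Fredholm set (and a fortiori the essential spectrum) is invariant under a relatively compact perturbation. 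The second claim of the proposition (discreteness of the remaining spectrum) is then automatic: outside $\Sigma_{ess}(\LL_{(\alpha,\gamma)})$ the operator $\LL_{(\alpha,\gamma)}-\zeta$ is Fredholm of index zero, so any $\zeta\in\Sigma\setminus\Sigma_{ess}$ is an isolated eigenvalue of finite algebraic multiplicity.

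For step (i), the multiplication operator $M_\alpha h := [i(\gamma\cdot v)+\nu_\alpha(v)]h$ on $L^1(m^{-1})$ has spectrum equal to the closure of the essential range of its symbol, a curve lying in $\{z:\Re z\ge\nu_{0,\alpha}\}$ by \eqref{nuCrois}, so that $\Sigma(-M_\alpha)\subset\Delta_{-\nu_{0,\alpha}}^c$. The small Laplacian is then handled by a semigroup estimate: combine a Feynman--Kac representation (Brownian motion with variance $2(1-\alpha)$ weighted by $\exp(-\int_0^t\nu_\alpha(B_s)\,ds)$) with a dissipativity bound on the exponential weight $m^{-1}=e^{a|v|^s}$. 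The damping contributes an exponential decay of rate at least $\nu_{0,\alpha}$, while the heat semigroup distorts $m^{-1}$ only by a factor of size $O(1-\alpha)$; for $\alpha$ close enough to $1$ the former dominates, yielding
\[ \|e^{tD_{(\alpha,\gamma)}}\|_{L^1(m^{-1})\to L^1(m^{-1})} \le C\,e^{-\bar\mu_\alpha t},\qquad t\ge 0, \]
uniformly in $\gamma\in\RR^d$. Hille--Yosida then gives $\Sigma(D_{(\alpha,\gamma)})\subset\Delta_{-\bar\mu_\alpha}^c$, extended to the domain $W_1^{2,1}(m^{-1})$ by the commutation of $D_{(\alpha,\gamma)}$ with velocity differentiation up to lower-order controllable terms.

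For step (ii), I would invoke the regularity properties of the gain operator recalled in the Appendix, originally established by Mouhot~\cite{Mouhot:2006} in the $L^1(m^{-1})$ setting and extended to the dissipative case by Mischler--Mouhot~\cite{mischler:20091,mischler:20092}. Under the Grad cut-off~\eqref{hypCrossSec1}--\eqref{hypCrossSec2} and thanks to $F_\alpha\in\mathcal S(\RR^d)$, the map $h\mapsto\Q_\alpha^+(F_\alpha,h)$ is a smoothing and localising convolution-type operator that sends $L^1(m^{-1})$ compactly into itself. The term $h\mapsto F_\alpha L(h)$ is compact as well, by a Rellich-type argument: multiplication by the Schwartz profile $F_\alpha$ followed by a velocity truncation suffices, since $L(h)(v)$ grows only linearly in $|v|$ while $F_\alpha$ decays faster than any polynomial. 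The continuous embedding $W_1^{2,1}(m^{-1})\hookrightarrow L^1(m^{-1})$ then promotes compactness on $L^1(m^{-1})$ into $D_{(\alpha,\gamma)}$-compactness for the graph-norm topology.

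The main obstacle is step~(i): $L^1(m^{-1})$ is non-reflexive, so no self-adjoint or numerical-range argument is available and the action of the heat semigroup on the exponential weight $m^{-1}$ must be tracked quantitatively. This is precisely where the restriction $\alpha>\alpha_0$ from Lemma~\ref{lemVracProp} enters: it guarantees that the diffusive correction to the damping term $\nu_\alpha$ is small enough to preserve a strictly positive exponential decay rate $\bar\mu_\alpha$, constructive in terms of $\nu_{0,\alpha}$ and the weight parameters $a,s$ of \eqref{defExpWeightFunc}. Once this quantitative spectral bound on $D_{(\alpha,\gamma)}$ is established, the conclusion $\Sigma_{ess}(\LL_{(\alpha,\gamma)})\subset\Delta_{-\bar\mu_\alpha}^c$ is a direct application of the Banach-space Weyl theorem.
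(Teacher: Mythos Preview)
Your decomposition $\LL_{(\alpha,\gamma)}=D_{(\alpha,\gamma)}+\LL_\alpha^c$ and the appeal to the Banach Weyl theorem coincide with the paper. Two points deserve comment.

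\textbf{Step (i).} Your Feynman--Kac argument in $L^1(m^{-1})$ is a different route from the paper's. The paper instead invokes the $L^p$-independence of the spectrum of Schr\"odinger-type operators \cite{HempelVoigt:86,Kunstmann:1999} to reduce the question to $L^2(m^{-1})$, where a one-line energy estimate (multiply \eqref{eqSchrod} by $\bar h$ and integrate) gives the decay $\|h(t)\|_{L^2(m^{-1})}\le e^{-\nu_{0,\alpha}t/2}$. This avoids tracking the action of the heat semigroup on the exponential weight entirely. Your approach is viable but heavier; the paper's shortcut is worth knowing.

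\textbf{Discreteness.} Here there is a genuine gap. You write that outside $\Sigma_{ess}$ the operator is Fredholm of index zero, ``so any $\zeta\in\Sigma\setminus\Sigma_{ess}$ is an isolated eigenvalue''. Index zero does follow (since $D_{(\alpha,\gamma)}-\zeta$ is invertible on $\Delta_{-\bar\mu_\alpha}$ and relatively compact perturbations preserve the index), and index zero plus $\zeta\in\Sigma$ does force $\ker(\LL_{(\alpha,\gamma)}-\zeta)\neq 0$. But \emph{isolated} does not follow automatically: Kato's theory \cite[IV.5.31]{kato:1966} only says that $\nullity$ and $\deficiency$ are constant on each component of the Fredholm set except at a discrete set of points. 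The constant value could in principle be positive, in which case the entire half-plane $\Delta_{-\bar\mu_\alpha}$ would be spectrum. To rule this out one must exhibit at least one resolvent point in $\Delta_{-\bar\mu_\alpha}$. The paper does this explicitly, and not via the decomposition $D_{(\alpha,\gamma)}+\LL_\alpha^c$: it switches to the second decomposition $\LL_{(\alpha,\gamma)}=A_\delta-B_{\alpha,\delta}(i(\gamma\cdot v))$ from \eqref{defABops}, uses the coercivity estimate on $B_{\alpha,\delta}$ (Lemma~\ref{lemVracProp}) to show that $B_{\alpha,\delta}(r+i(\gamma\cdot v))$ is invertible for $r\ge r_0$ large, and then concludes by compactness of $A_\delta$ that $[r_0,\infty)$ lies in the resolvent set. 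This is a substantive step that your proposal skips.
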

	    
	    \begin{proof}
	      Let us use the expression \eqref{eqOpLinFour} of the collision operator in spatial Fourier variables, and decompose it for $h \in L^1\left (m^{-1}\right )$ as a local and a non local part:
	      \begin{align}
	        \LL_{(\alpha, \, \gamma)} \, h & = -\left [i \, (\gamma \cdot v) + \nu_\alpha(v) \right ] h + (1-\alpha) \Delta_v h +  2 \, \Q_\alpha^+\left ( F_\alpha, h \right ) - F_\alpha \, L(h) \notag \\
	         & = D_{(\alpha, \, \gamma)} h + \LL_\alpha^c h, \label{defDecompoLalphgam}
	      \end{align}
	      where 
	      \begin{equation}
	        \label{defDK}
	        \left \{ \begin{aligned}
	          & D_{(\alpha, \, \gamma)} := -\left [i \, (\gamma \cdot v) +  \nu_\alpha(v)\right ] \id + (1-\alpha) \Delta_v, \\
	          & \LL_\alpha^c := 2 \Q_\alpha^+\left ( F_\alpha, \cdot \right ) - F_\alpha \, L(\cdot).
	        \end{aligned} \right.
	      \end{equation}
	      We start by  the spectrum of $D_{(\alpha, \, \gamma)}$ in $L^1\left (m^{-1}\right )$. 
	      This operator is the difference of a Laplace operator with the operator of multiplication by $C_{(\alpha, \, \gamma)}(v) := i \, (\gamma \cdot v) + L(F_\alpha)$. This quantity verifies according to the lower bound of the collision frequency \eqref{nuCrois}
	      \[\Re e \, C_{\alpha, \, \gamma}(v) > \nu_{0,\alpha},\]
	      where $\nu_{0,\alpha}$ is  the lower bound of the loss term $v \to L(F_\alpha)(v)$ (thanks to the smoothness of the profile $F_\alpha$ stated in Proposition \ref{propHoldContOp}).
	      
	      It is known from \eg \cite{HempelVoigt:86,Kunstmann:1999} that the spectrum of the Schr\"odinger-like operator $D_{(\alpha, \, \gamma)}$ is independent of the weighted $L^p$ space (for $p \in [1, +\infty)$) where we study it. Let us compute this spectrum in $L^2$.
	      To this end, we shall look at the stability properties of the semi-group generated by $D_{(\alpha, \, \gamma)}$ on $L^2\left (m^{-1}\right )$: let  $h=h(t,v) \in \mathcal C \left (0, +\infty; W_1^2\right )$ be a weak solution to
	      \begin{equation}
	        \label{eqSchrod}
	        \frac{\partial h}{\partial t} = D_{(\alpha, \, \gamma)} h.
	      \end{equation}
	      If we multiply this equation by $\bar h$ and integrate in the velocity space, we have thanks to Stokes Theorem and for $\alpha$ close to $1$
	      \begin{align}
	        \frac{\partial}{\partial t}  \|h(t)\|_{L^2\left (m^{-1}\right )}^2 \leq  -  \| C_{(\alpha, \, \gamma)} \, h(t)\|_{L^2\left (m^{-1}\right )} \leq - \nu_{0,\alpha}\|h(t)\|_{L^2\left (m^{-1}\right )}^2. \label{ineqDecaySchrod}
	      \end{align}
	      We then obtain that
	      \begin{equation*}
	        \|h(t)\|_{L^2\left (m^{-1}\right )} \leq e^{- \nu_{0,\alpha} \, t /2}.
	      \end{equation*}
	      Hence, there exists a constant $0 <  \bar \mu_{\alpha} < \nu_{0,\alpha} $ such that the spectrum of the operator $D_{(\alpha, \, \gamma)}$ is included in the set $\Delta_{-\bar \mu_{\alpha}}^c$.
	      
	      Moreover, thanks to the H\"older continuity of the inelastic gain term in operator norm (with loss of weight) stated in Proposition \ref{propHoldContOp}, the operator $\LL_\alpha^c$ is $D_{(\alpha, \, \gamma)}$--compact (and this is here that the weak inelasticity assumption $\alpha \in (\alpha_0, 1]$ is used). 
	       Notice that we have chosen to define the essential spectrum of an operator $S$ ``à la Kato'', namely as the complement of the semi-Fredholm set of $S$ in $\CC$. Then we can apply the Banach version of Weyl's Theorem (see \eg \cite[Theorem  IV.5.26 and IV.5.35]{kato:1966}),  stating that the semi-Fredholm set is stable under relatively compact perturbation. Hence, the essential spectrum of $\LL_{(\alpha, \, \gamma)}$ is included in $\Delta_{-\bar \mu_{\alpha}}^c$.
	       
	       The set $\Delta_{-\bar \mu_{\alpha}}$ is then equal to the Fredholm set $\mathcal{F}\left (\LL_{(\alpha, \, \gamma)}\right )$.
	       It remains to show that this set only contains the eigenvalues and the resolvent set. We know from the discussion in \cite[Chapter IV, Section 6, and Theorem 5.33]{kato:1966} that $\mathcal{F}\left (\LL_{(\alpha, \, \gamma)}\right )$ is an open set, composed of the union of a countable number of components $ \mathcal F_n$, characterized by the value of the index: for any $n \in \NN$, the functions 
	       \[\nullity :\zeta \to \nullity\left (\LL_{(\alpha, \, \gamma)} - \zeta\right ), \qquad \deficiency:\zeta \to \deficiency\left (\LL_{(\alpha, \, \gamma)} - \zeta\right )\]
	       are constant on $\mathcal F_n$, except for a countable set of isolated values of $\zeta$.
	       In our case, we have $\mathcal{F}\left (\LL_{(\alpha, \, \gamma)}\right ) = \Delta_{-\bar \mu_{\alpha}}$ which is connected; it has only one component, which means that $\nullity(\zeta)$ and $\deficiency(\zeta)$ are constant on $\Delta_{-\bar \mu_{\alpha}}$,  except for a countable set of isolated values of $\zeta$.
	       
	       We will prove that these constant values are $\nullity(\zeta) = \deficiency(\zeta) = 0$, meaning that $\zeta$ belongs to the resolvent set of $\LL_{(\alpha, \, \gamma)}$. 
	       The remaining isolated values $\zeta$, being in the Fredholm set, then verify $0 < \nullity(\zeta) < +\infty$ and $0 < \deficiency(\zeta) < +\infty$, which exactly characterizes the eigenvalues. We shall follow closely the proof of \cite[Proposition 3.4]{Mouhot:2006}, and exhibit an uncountable set $I \subset \Delta_{-\bar \mu_{\alpha}}$ such that $\nullity(\zeta) = \deficiency(\zeta) = 0$ for all $\zeta \in I$.
	       
	       Let us use the decomposition \eqref{defABops} introduced initially in \cite{mischler:20091}
	       \begin{equation*}
	         \LL_{\alpha, \, \gamma} = A_\delta - B_{\alpha, \, \delta}\left (i (\gamma \cdot v)\right ),
	       \end{equation*}
	       for $\delta > 0$ (see Section \ref{secFuncToolbox} for more details). We know from Lemma \ref{lemVracProp} that $A_\delta$ is compact on $L^1\left (m^{-1}\right )$, and that $B_{\alpha,\delta}$ satisfies the coercivity estimate
	       \begin{equation}
	         \label{ineqCoercivB}
	         \left \| B_{\alpha, \, \delta}(\zeta) \,g \right \|_{L^1\left (m^{-1}\right )} \geq \left \| \left (\nu_1 + \Re e \, \zeta\right ) g \right \|_{L^1\left (m^{-1}\right )} - \ve(\delta)  \left \| \left (\nu_1 + \Re e \, \zeta\right ) g \right \|_{L^1\left (m^{-1}\right )},
	       \end{equation}
	       where $\ve(\delta) \to 0$ where $\delta \to 0$. If  we fix $r_0 > 0$ sufficiently big and  $\delta>0$ small enough, then we have according to \eqref{ineqCoercivB} for all $r \geq r_0$
	       \begin{equation*}
	         \left \| B_{\alpha, \, \delta}\left (r + i (\gamma \cdot v) \right ) g \right \|_{L^1\left (m^{-1}\right )} \geq \frac{\nu_{0,1} + r_0}{2} \| g \| _{L^1\left (m^{-1}\right )}.
	       \end{equation*}
	       Thus, the operator $B_{\alpha, \, \delta}\left (r + i (\gamma \cdot v) \right )$ is invertible on $L^1\left (m^{-1}\right )$, for all $r \geq r_0$, and then it is the same for $ \LL_{\alpha, \, \gamma} - r = A_\delta - B_{\alpha, \, \delta} \left (r + i (\gamma \cdot v) \right )$ by compacity of $A_\delta$. It finally means that the interval $I:=[r_0, +\infty)$ is included on the resolvent set of $\LL_{\alpha, \, \gamma}$, and then that
	       \[ \nullity(\zeta) = \deficiency(\zeta) = 0, \quad \forall \, \zeta \in [r_0, +\infty), \]
	       which concludes the proof.
	    \end{proof}

		\subsection{Behavior of the Eigenvalues for Small Inelasticity}	

      We shall now focus on the discrete spectrum of this operator, namely its eigenvalues. A major difference with the elastic case in the classical Hilbertian $L^2$ setting is that the operator we deal with is not a nonpositive operator, and we cannot conclude thanks to the last proposition that this operator has a \emph{spectral gap} (namely a negative bound for its eigenvalues).
      Nevertheless, we know from \cite{Mouhot:2006} for the elastic case and \cite{mischler:20092} for the weak inelasticity case $\alpha \in (\alpha_0, 1)$ that $\LL_\alpha$ has a spectral gap $- \bar \lambda$ in $L^1\left (m^{-1}\right )$, verifying for $\alpha$ sufficiently small (say $\alpha \in (\alpha_1, 1]$ for $1 > \alpha_1 > \alpha_0$)
      \[ 0 <  \bar \lambda < \mu_* < \bar \mu_{\alpha},\]
      for a nonnegative constant $\mu_*$ depending on $\alpha$.

		Let us now study the behavior of the discrete spectrum of  $\mathcal{L}_{(\alpha, \, \gamma)}$ for small values of the frequency $\gamma$. We shall show that if $\gamma \to 0$, then the eigenvalues of this operator converge first towards the real axis and then towards $0$.
		
		\begin{proposition} 
		  \label{propLocalisDiscSpect}
			Let $\delta > 0$. There exists $\alpha_2 \in(\alpha_1,1]$ such that if $\alpha \in (\alpha_2,1]$ there exists a nonnegative number $\gamma_0$ such that for all $|\gamma| \leq \gamma_0$, if $\lambda \in \Sigma_d(\mathcal{L}_{(\alpha, \, \gamma)})$, then
			\begin{gather}
				 \lambda \in \Delta_{- \mu_*} \Rightarrow \left | \Im m \, \lambda \right | \leq \delta; \label{imlambda}\\
				 \lambda \in \Delta_{-\frac{\bar \lambda}{2}} \Rightarrow | \lambda| \leq \delta. \label{valambda}
			\end{gather} 
		\end{proposition}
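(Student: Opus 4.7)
The plan is to argue by contradiction in both claims, leveraging the splitting $\LL_{(\alpha, \gamma)} = D_{(\alpha, \gamma)} + \LL_\alpha^c$ from \eqref{defDecompoLalphgam} together with the space-homogeneous spectral information on $\LL_\alpha$ recalled just above the statement. To establish \eqref{imlambda}, suppose there exist sequences $\gamma_n \to 0$ and eigenvalues $\lambda_n \in \Sigma_d\left (\LL_{(\alpha, \gamma_n)}\right )$ with $\Re e \, \lambda_n \geq -\mu_*$ but $|\Im m \, \lambda_n| > \delta$, and associated eigenvectors $h_n \in L^1\left (m^{-1}\right )$ normalized by $\|h_n\|_{L^1\left (m^{-1}\right )} = 1$. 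Since $\mu_* < \bar\mu_\alpha$, each $\lambda_n$ lies in the resolvent set of $D_{(\alpha, \gamma_n)}$, and an $L^1\left (m^{-1}\right )$ variant of the decay estimate \eqref{ineqDecaySchrod} should provide a resolvent bound on $\left (D_{(\alpha, \gamma_n)} - \lambda_n\right )^{-1}$ that is uniform in $n$. Combined with the boundedness of $\LL_\alpha^c$, this also forces $(\lambda_n)$ to be bounded in $\CC$, so up to extraction $\lambda_n \to \lambda_\infty$ with $\Re e \, \lambda_\infty \geq -\mu_*$ and $|\Im m \, \lambda_\infty| \geq \delta$.

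Rewriting the eigenvalue identity as $h_n = -\left (D_{(\alpha, \gamma_n)} - \lambda_n\right )^{-1} \LL_\alpha^c h_n$ and invoking the compactness of $\LL_\alpha^c$ on $L^1\left (m^{-1}\right )$ (the very ingredient that was used in the proof of Proposition \ref{propLocalisEssSpect}) together with the uniform resolvent bound, I would extract a strongly convergent subsequence $h_n \to h_\infty$ in $L^1\left (m^{-1}\right )$ with $\|h_\infty\|_{L^1\left (m^{-1}\right )} = 1$. The transport contribution $i(\gamma_n \cdot v) h_n$ vanishes in the limit because $|\gamma_n| \to 0$, while the identity
\[ \nu_\alpha h_n = (1-\alpha)\Delta_v h_n - i(\gamma_n \cdot v) h_n + \LL_\alpha^c h_n - \lambda_n h_n\]
provides a uniform moment bound $\|\nu_\alpha h_n\|_{L^1\left (m^{-1}\right )} \leq C$ needed to control this term. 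The result is $\LL_\alpha h_\infty = \lambda_\infty h_\infty$, and hence $\lambda_\infty \in \Sigma_d(\LL_\alpha) \cap \Delta_{-\mu_*}$ with $|\Im m \, \lambda_\infty| \geq \delta$.

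To reach a contradiction I would invoke the spectral structure of $\LL_\alpha$ near $\alpha = 1$ established in \cite{mischler:20091, mischler:20092}: by analytic perturbation theory applied to the isolated $(d+2)$-fold eigenvalue $0$ of $\LL_1$, the eigenvalues of $\LL_\alpha$ in $\Delta_{-\mu_*}$ form a finite cluster that converges to $0$ as $\alpha \to 1$. Choosing $\alpha_2 \in (\alpha_1, 1]$ close enough to $1$ therefore forces $|\Im m \, \mu| < \delta$ for every eigenvalue $\mu$ of $\LL_\alpha$ in $\Delta_{-\mu_*}$ whenever $\alpha \in (\alpha_2, 1]$, contradicting $|\Im m \, \lambda_\infty| \geq \delta$. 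Assertion \eqref{valambda} then follows verbatim under the same scheme, with $\Delta_{-\mu_*}$ replaced by $\Delta_{-\bar\lambda/2}$: tightening $\alpha_2$ if necessary forces the whole cluster of eigenvalues of $\LL_\alpha$ in $\Delta_{-\bar\lambda/2}$ to lie inside $\{|\zeta| \leq \delta\}$, giving the analogous contradiction $|\lambda_\infty| \leq \delta$.

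The main obstacle I anticipate is the extraction step, namely establishing a uniform-in-$\gamma$ resolvent bound for $D_{(\alpha, \gamma)}$ in the non-Hilbertian $L^1\left (m^{-1}\right )$ setting, and ensuring that the limit eigenvector does not vanish. For the resolvent bound, the multiplicative coercivity $\Re e \, C_{(\alpha, \gamma)}(v) \geq \nu_{0,\alpha}$ holds uniformly in $\gamma$, and the $L^2$ semigroup argument of Proposition \ref{propLocalisEssSpect} should transfer to $L^1\left (m^{-1}\right )$ by testing the evolution against $\sgn(h)\, m^{-1}$ and exploiting the sign of the Laplacian contribution after absolute-value convexification. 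The nonvanishing of $h_\infty$ then follows directly from the $L^1\left (m^{-1}\right )$ normalization of $h_n$ together with the strong compactness supplied by $\LL_\alpha^c$.
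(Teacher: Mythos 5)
Your strategy is essentially the one the paper follows: argue by contradiction, use the splitting $\LL_{(\alpha,\gamma)} = D_{(\alpha,\gamma)} + \LL_\alpha^c$ from \eqref{defDecompoLalphgam}, combine the compactness of $\LL_\alpha^c$ with a uniform resolvent bound on $D_{(\alpha,\gamma)}$ to extract a strongly convergent subsequence of eigenvectors, pass to the limit $\gamma_n \to 0$ to produce an eigenvalue of the space-homogeneous operator $\LL_\alpha$, and contradict the known structure of $\Sigma_d(\LL_\alpha)$ for $\alpha$ near $1$.

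One step fails as stated, however. For implication \eqref{imlambda} you invoke the claim that ``the eigenvalues of $\LL_\alpha$ in $\Delta_{-\mu_*}$ form a finite cluster that converges to $0$ as $\alpha \to 1$.'' This is not true: since $\mu_* > \bar\lambda$ and $-\bar\lambda$ is the spectral gap, the half-plane $\Delta_{-\mu_*}$ may contain nonzero eigenvalues of $\LL_1$ with real part in $[-\mu_*, -\bar\lambda]$, and the corresponding eigenvalues of $\LL_\alpha$ converge to those, not to $0$. What the contradiction actually requires --- and what the paper uses --- is only that the eigenvalues of $\LL_\alpha$ in a compact region can be made arbitrarily close (in $1-\alpha$) to those of $\LL_1$, which are \emph{real} by \cite{Mouhot:2006}; hence their imaginary parts are smaller than $\delta$ for $\alpha$ close to $1$, contradicting $\left | \Im m \, \lambda_\infty \right | \geq \delta$. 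Your cluster-at-$0$ argument is the right one only for \eqref{valambda}, where $\Delta_{-\bar\lambda/2}$ sits strictly inside the spectral gap, so the limit eigenvalue must either lie in the hydrodynamic cluster near $0$ or satisfy $\Re e \, \lambda \leq -\bar\lambda$. With that substitution the proof goes through. A smaller remark: to conclude that $(\lambda_n)$ is bounded you need the resolvent bound on $\left (\lambda_n - D_{(\alpha,\gamma_n)}\right )^{-1}$ to \emph{decay} in $|\lambda_n|$, of the form $C/(|\lambda_n|-\nu_{0,\alpha})$, not merely to be uniform in $n$; this is exactly the estimate the paper derives from the exponential decay of the semigroup generated by $D_{(\alpha,\gamma)}$.
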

		
		\begin{proof}
		  Let us first notice that if $\lambda$ is an eigenvalue of $\mathcal{L}_{(\alpha, \, \gamma)}$ and $h$ an associated eigenvector, then using the decomposition \eqref{defDecompoLalphgam} introduced in the proof of Proposition \ref{propLocalisEssSpect}, we can write
		  \begin{equation}
		    \label{eqDecompoEigVal}
		    \LL_\alpha^c h = \left ( \lambda - D_{(\alpha, \, \gamma)} \right ) h,
		  \end{equation}
		  where $\LL_\alpha^c$ is compact on $L^1\left (m^{-1}\right )$ (thanks to the sharp estimates of Lemma \ref{lemVracProp}) and 
		  \[D_{(\alpha, \, \gamma)} = -\left [i \, (\gamma \cdot v) + \nu_\alpha(v) \right ] \id + (1-\alpha) \Delta_v .\]
		  
		  We will proceed by contradiction using the representation \eqref{eqDecompoEigVal}. Concerning the first implication, if, for $\delta > 0$, there exist a sequence $(\gamma_n)_n \subset \mathbb{R}^d$ converging towards $0$, a sequence of functions $(h_n)_n \in L^1\left (m^{-1}\right )$ of unit norm, and a sequence of complex numbers $\lambda_n \in \Sigma_d\left (\mathcal{L}_{(\alpha,\,\gamma_n)}\right )$ verifying 
			\begin{equation} 
			  \label{eqDecompoEigValn}
			  \left \{ \begin{aligned}
			    & \LL_\alpha^c h_n = \left ( \lambda _n- D_{(\alpha, \, \gamma_n)} \right ) h_n, \\
				  & \left | \Im m \, \lambda_n \right | > \delta, \quad \Re e \,\lambda_n \geq - \mu_*,
				\end{aligned} \right.
			\end{equation}
			then we must have $\limsup \left| \Im m \, \lambda_n \right | < \infty$. Indeed, the operator $\LL_\alpha^c$ is compact on $L^1\left (m^{-1}\right )$, and then the sequence $(\LL_\alpha^c h_n)_n$ converges (up to an extraction) towards $g \in L^1\left (m^{-1}\right )$. Thus we can write using \eqref{eqDecompoEigValn}
			\begin{equation}
			  \label{eqLimGKFn}
				g =  \lim_{n \to \infty} \left ( \lambda_n - D_{(\alpha, \, \gamma_n)} \right ) h_n.
			\end{equation}
			
			We have seen in the proof of Proposition \ref{propLocalisEssSpect} that the semi-group $S_t^{(\alpha, \, \gamma)}$ associated to the operator $D_{(\alpha, \, \gamma)}$ in $L^1$ is exponentially decaying in time, uniformly in $\alpha$ and $\gamma$.
			But, we know (see \eg \cite{engel:2000}, chap. II) that if $\mathcal R(D_{(\alpha, \, \gamma)}, \cdot)$ is the resolvent operator of $D_{\alpha, \, \gamma}$, we have the integral representation for all $\zeta \in R(D_{(\alpha, \, \gamma)})$
			\begin{equation*}
			  \mathcal R\left (D_{(\alpha, \, \gamma)}, \zeta\right ) = \lim_{t \to +\infty} \int_0^{t} e^{-t \,\zeta} S_t^{(\alpha, \, \gamma)} \, dt. 
			\end{equation*}
			Thus, using the decay of $S_t^{(\alpha, \, \gamma)}$, we have $ \mathcal R\left (D_{(\alpha, \, \gamma)}, 0\right ) =: D_{(\alpha, \, \gamma)}^{-1} $ bounded in $L^1$, uniformly in  $\gamma$, and then according to \eqref{eqLimGKFn}
			\begin{equation}
			  \label{eqLimFng} 
			  \lim_{n \to \infty} h_n = \left ( \lim_{n \to \infty} \lambda_n - D_{(\alpha, \, 0)}\right )^{-1} g. 
			\end{equation}
			But, we also have for $v \in \RR^d$
			\begin{equation*}
			  \left ( \lambda_n - D_{(\alpha, \, \gamma_n)}\right )^{-1} g(v) = \frac{1}{\lambda_n + i ( \gamma_n \cdot v) + \nu_\alpha(v) } \left ( \id - \frac{1- \alpha}{\lambda_n + i ( \gamma_n \cdot v) + \nu_\alpha(v) } \Delta_v  \right )^{-1} g(v),
			\end{equation*}
			and then by considering again the behavior of the solutions to equation \eqref{eqSchrod}, which gives inequality \eqref{ineqDecaySchrod}, we obtain a constant $C$ independent on $(\alpha, \lambda_n, \gamma_n)$ such that
			\begin{equation}
			  \label{ineqLambPlusDGLinf}
			  \left \| \left ( \lambda_n - D_{(\alpha, \, \gamma_n)}\right )^{-1} g \right \|_{L^\infty} \leq  \frac{C}{|\lambda_n| - \nu_{0,\alpha}} \|g\|_{L^\infty}.
			\end{equation}
			Finally, if $\lim \left| \Im m \, \lambda_n \right | = \infty$ we would have according to the limit \eqref{eqLimFng} and the estimation \eqref{ineqLambPlusDGLinf} 
			\[\lim_{n \to \infty} \| h_n \|_{L^\infty} = 0\]
			with $\|h_n\|_{L^1\left (m^{-1}\right )} = 1$, which is not possible.
			Hence, $\left| \Im m \, \lambda_n \right| \leq C$ for an infinite number of indices $n$ and $C >0$. 
			
			But, we also have $-\mu_* \leq \Re e \,\lambda_n < r_0 $ (where $r_0 > 0$ is defined in the proof of Proposition \ref{propLocalisEssSpect}), and then we can extract another subsequence $\left (\lambda_{n_k}\right )_k$ converging towards $\lambda \in \CC$ such that $\Im m \, \lambda \geq \delta > 0$. Using the fact that $\gamma_n \to 0$ and the smoothness of the map $\lambda \mapsto \left (\lambda - D_{(\alpha, \, 0)}\right )^{-1}$, we obtain in \eqref{eqLimFng}
			\begin{equation*}
				\lim_{k \to \infty} h_{n_k} = \left (\lambda - D_{(\alpha, \, 0)}\right )^{-1} g =: h \in L^1\left (m^{-1}\right ),
			\end{equation*}
			with $\|h\|_{L^1\left (m^{-1}\right )} = 1$.
			Hence we conclude by inversion of $ \left (\lambda - D_{(\alpha, \, 0)}\right )^{-1}$ and by the smoothness of the nonlocal part of $\LL_\alpha$ that 
			\begin{align*}
			  \left (\lambda - D_{(\alpha, \, 0)}\right ) \, h = g = \lim_{n \to \infty} \LL_\alpha^c h_n = \LL_\alpha^c h 
			\end{align*} 
			which means according to the definition of $\LL_\alpha$ that 
			\[\lambda h = \mathcal{L}_\alpha h. \]
			This is absurd because $\left | \Im m \, \lambda \right |\geq \delta > 0$, $\Re e \,\lambda \geq - \mu_*$ for $\mu_*$ close to the spectral gap of $\LL_\alpha$, and  yet we know from \cite{mischler:20091} that the eigenvalues of $\LL_\alpha$ can be made arbitrarily close (with respect to $1-\alpha$) to the ones of $\LL_1$, which are real according to \cite{Mouhot:2006}.
			
			We shall now give the proof of the implication \eqref{valambda}, also by contradiction. 
			If for $\delta > 0$ there exist a sequence $(\gamma_n)_n$ converging towards $0$, a sequence $(h_n)_n \in L^1\left (m^{-1}\right )$ of unit norm, and some complex numbers $\lambda_n \in \Sigma_d\left (\mathcal{L}_{\ve, \gamma_n}\right )$ such that
			\begin{equation*}
				- \bar\lambda/2 \leq \Re e \,\lambda_n \leq -\delta,
			\end{equation*}
			then $ \lambda_n \in \Delta_{- \mu_*}$ and according to the relation \eqref{imlambda} we have $\left | \Im m \, \lambda_n \right | \leq \delta$. We can then extract a subsequence $\left (\lambda_{n_k}\right )_k$ which converges towards a complex number $\lambda$ also verifying $- \bar\lambda/2 \leq \Re e \,\lambda \leq -\delta$. 
			When $k \to \infty$, the same argument than before gives $\mathcal{L}_\alpha h = \lambda h$ with $\lambda \neq 0$.  By using again the spectral properties of $\LL_\alpha$, we then have $\Re e \, \lambda \leq -\bar\lambda$, which is absurd.
		\end{proof}
		
		This concludes the proof of Theorem \ref{thmLocalizSpect}. We also summarized the results of this proposition in Figure \ref{figSpect}.
		Moreover, it gives us some rough information on the behavior of the resolvent operator of $\mathcal{L}_{(\alpha, \gamma)}$.
		\begin{corollary}
			 If $\alpha \in (\alpha_1,1]$, the resolvent operator $\mathcal{R}(\mathcal{L}_{(\alpha, \, \gamma}), \zeta)$ is well defined for $\zeta \in \Delta_{-\mu_*}$ such that $\Im m \, \zeta > \delta$.
		\end{corollary}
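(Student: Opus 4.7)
The corollary should follow almost immediately from Theorem \ref{thmLocalizSpect} by combining its two parts; my plan is to show that the prescribed region avoids both the essential and the discrete spectrum, so it lies in the resolvent set and the resolvent operator is automatically well defined (and bounded by the closed graph theorem, since $\mathcal{L}_{(\alpha,\gamma)}$ is closed on $W^{2,1}_1(m^{-1})$).

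First, I would handle the essential spectrum. Recall from the discussion just before Proposition \ref{propLocalisDiscSpect} that the constants are ordered as $0 < \bar\lambda < \mu_* < \bar\mu_\alpha$, hence the half-plane inclusion $\Delta_{-\mu_*} \subset \Delta_{-\bar\mu_\alpha}$ holds. Proposition \ref{propLocalisEssSpect} gives $\Sigma_{ess}\bigl(\mathcal{L}_{(\alpha,\gamma)}\bigr) \subset \Delta_{-\bar\mu_\alpha}^c$, so every $\zeta$ with $\Re e\,\zeta \geq -\mu_*$ automatically avoids the essential spectrum.

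Second, I would deal with the discrete spectrum by invoking the implication \eqref{imlambda} in Proposition \ref{propLocalisDiscSpect}: for $\alpha \in (\alpha_2,1] \subset (\alpha_1,1]$ and $|\gamma| \leq \gamma_0$, any discrete eigenvalue $\lambda$ lying in $\Delta_{-\mu_*}$ satisfies $|\Im m\,\lambda| \leq \delta$. Consequently, the additional requirement $\Im m\,\zeta > \delta$ rules out $\zeta$ belonging to $\Sigma_d\bigl(\mathcal{L}_{(\alpha,\gamma)}\bigr)$.

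Combining these two steps, any $\zeta \in \Delta_{-\mu_*}$ with $\Im m\,\zeta > \delta$ is in $R\bigl(\mathcal{L}_{(\alpha,\gamma)}\bigr) = \Sigma\bigl(\mathcal{L}_{(\alpha,\gamma)}\bigr)^c$, so the resolvent operator $\mathcal{R}\bigl(\mathcal{L}_{(\alpha,\gamma)},\zeta\bigr)$ exists as a bounded operator on $L^1(m^{-1})$. There is genuinely no obstacle here: the entire content of the statement has already been carried out in Propositions \ref{propLocalisEssSpect} and \ref{propLocalisDiscSpect}; the only point one must explicitly record is the chain $\bar\lambda < \mu_* < \bar\mu_\alpha$ guaranteeing that the strip in question lies inside the Fredholm set on which the two excluded pieces of $\Sigma\bigl(\mathcal{L}_{(\alpha,\gamma)}\bigr)$ can occur.
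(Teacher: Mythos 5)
Your argument is correct and is exactly the intended one: the paper states this corollary as an immediate consequence of Propositions \ref{propLocalisEssSpect} and \ref{propLocalisDiscSpect}, combining the inclusion $\Delta_{-\mu_*}\subset\Delta_{-\bar\mu_\alpha}$ (which excludes the essential spectrum) with implication \eqref{imlambda} (which excludes discrete eigenvalues of imaginary part exceeding $\delta$). You are also right to record the hypotheses $\alpha\in(\alpha_2,1]$ and $|\gamma|\leq\gamma_0$, which the corollary's statement elides but which are genuinely needed to invoke Proposition \ref{propLocalisDiscSpect}.
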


	\section{Inelastic Dispersion Relations}
		\label{secEigTaylor}
		
		Our goal in this section is to precise the localization results of the previous section, by proving Theorem \ref{thmTaylorExpSpectrum}, that is to give a Taylor expansion of the eigenvalues of $\LL_{(\alpha, \gamma)}$ in $\alpha$ and $\gamma$.
		The purpose of this expansion is twofold: on the one hand, we want to establish that, at least for small values of $\alpha$ and $\gamma$, the eigenvalues of the linear operator $\LL_{(\alpha, \gamma)}$ stay at the left of the imaginary axis. This could be useful if \eg one wants to prove nonlinear stability of the solutions to \eqref{eqBoltzEPS}.
		On the other hand, obtaining this decomposition up to the second order in the spatial frequency $\gamma$  and to first order in $\alpha$ is necessary to establish the validity of the linearized quasi-elastic hydrodynamic limit of our model, in the same way than \cite{ellis:1975}.
		To obtain this expansion, we shall refine the method of proof of this paper, together with the use of some ideas introduced in \cite{mischler:20092} in order to deal with the quasi-elastic setting.
		
		We recall for the reader's convenience that we are interested in the following eigenvalue problem: finding $\lambda\in \CC$, $\gamma \in \RR^d$ and $h \in L^1(m^{-1})$ such that 
		\begin{equation*}
			\left (- i(\gamma \cdot v) + \mathcal{L}_\alpha \right )h = \lambda \, h,
		\end{equation*}
		which can be reformulated thanks to the decomposition \eqref{defDecompoLalphgam} as finding $\lambda\in \CC$, $\gamma \in \RR^d$ and $h \in L^1(m^{-1})$ such that 
		\begin{equation} 
		  \label{pbEigen}
			\mathcal L_\alpha^c h = \left (\lambda + \nu_\alpha(v) + i(\gamma \cdot v) - (1-\alpha) \Delta_v \right) h.
		\end{equation}

	\subsection{Projection of the Eigenvalue Problem} 
	  \label{subProjEig}
		
		Let us now define the scalar product we will use in the following. If $\phi, \psi$ are such that the following expression has a meaning, we will set
		\begin{equation*}
			\langle \phi, \psi \rangle := \int_{\RR^d} \phi(v) \, \overline{\psi}(v) \, F_1^{-1}(v) \,dv,
		\end{equation*}
	  where $F_1 = \mathcal M_{1,0,\bar T_1}$ is the quasi-elastic equilibrium and $\bar T_1$ is given by \eqref{defElasticEquilibTemperature}.
	  Indeed, our goal is to introduce a \emph{spectral decomposition} of $L^1\left (m^{-1}\right )$ as a direct sum of $\LL_\alpha$-invariant spaces. 
	  The inner product we use for this purpose is the one of $L^2\left (F_1^{-1} \right ) $ because $f = m + h \in L^1\left (m^{-1}\right )$ if and only if $h \in L^1\left (m^{-1}\right )$ and $f \in L^2\left (F_1^{-1}\right )$ if and only if $h \in L^2\left (F_1^{-1}\right )$.
	  This hilbertian structure allows us to define the spectral projections.
			
		Let us start by decomposing the operator $\mathcal L_\alpha^c$ as
		\begin{equation} 
		  \label{defK}
			\mathcal L_\alpha^c = \nu_\alpha^{1/2} \left ( \Pi + \mathcal S_\alpha \right ) \nu_\alpha^{1/2},
		\end{equation}
		where $ \Pi$ is the projection on the space 
		\[\mathcal{N}_\alpha := \nu_\alpha^{1/2} N_1 = \vect \{ \nu_\alpha^{1/2} F_1, \, \nu_\alpha^{1/2} v_i \, F_1 , \, \nu_\alpha^{1/2} |v|^2 \, F_1 : \, 1 \leq i \leq d \},\]
		and $\mathcal S_\alpha$ is given by
		\[ \mathcal S_\alpha := \nu_\alpha^{-1} \LL_\alpha^{c} - \nu_\alpha^{-1/2} \Pi \, \nu_\alpha^{1/2}.\]
		Actually, $\mathbb P := \nu_\alpha^{- 1/2} \Pi $ is the spectral projection on the null space of $\LL_1$, and can be defined using the resolvent operator of $\LL_1$ as
		\begin{align*} 
		  \mathbb P  & = \frac{1}{2 i \pi} \int_{\zeta \in \CC : |\zeta| = r} \mathcal{R}(\LL_1, \zeta) \, d \zeta
		\end{align*}
		where $r < \delta$ for $\delta$ sufficiently small (see the discussion in Section $5$ of \cite{mischler:20091}).
		In particular, $\mathbb{P}$ commutes with $\LL_1$.
		Moreover, the operator $\mathcal L_\alpha^c$ being compact on $L^1(m^{-1})$, $\Pi + \mathcal S_\alpha$ is compact on the same space.	Given that the rank of $\Pi$ is finite, $\mathcal S_\alpha$ is then a compact operator on $L^1(m^{-1})$.
		
		We first need to prove a result concerning the eigenvalues of the operator $\nu_1^{-1/2} \mathcal S_1 \, \nu_1^{1/2}$.
		
		\begin{lemma} 
		  \label{Lemvp1}
			On the space $L^1(m^{-1})$, $\lambda = 1$ is not an eigenvalue of $\nu_1^{-1/2} \mathcal S_1 \, \nu_1^{1/2}$.
		\end{lemma}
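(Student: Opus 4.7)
Suppose $h\in L^1(m^{-1})$ satisfies $\nu_1^{-1/2}\mathcal S_1\,\nu_1^{1/2} h = h$, and set $g := \nu_1^{1/2}h$ so that $\mathcal S_1 g = g$. The first step would be to use the defining identity \eqref{defK} together with the splitting $\mathcal L_1 = \mathcal L_1^c - \nu_1$ (valid at $\alpha=1$, where the thermal term $(1-\alpha)\Delta_v$ disappears) in order to rewrite this eigenvalue equation as the functional identity
\begin{equation*}
\mathcal L_1 h \,=\, \nu_1^{1/2}\,\Pi\bigl(\nu_1^{1/2}h\bigr).
\end{equation*}
The left-hand side lies in the image of $\mathcal L_1$ whereas the right-hand side belongs to $\nu_1\cdot N_1$; the proof then amounts to showing that these two subspaces intersect trivially.

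To make this precise, I would test the identity against an arbitrary $\phi \in N_1 = \ker \mathcal L_1$ using the Hilbertian pairing $\langle \cdot, \cdot\rangle$ defined just above the statement. This pairing is well defined because $\phi\,F_1^{-1}$ is polynomial and $h \in L^1(m^{-1})$ has finite polynomial moments of all orders (as $m^{-1}(v)=\exp(a|v|^s)$ with $s<1$ dominates every power of $|v|$). The classical self-adjointness of the elastic linearized operator $\mathcal L_1$ on $L^2\bigl(F_1^{-1}\bigr)$, combined with $\mathcal L_1\phi = 0$, then forces $\langle \mathcal L_1 h, \phi\rangle = \langle h, \mathcal L_1 \phi\rangle = 0$. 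Expanding $\Pi(\nu_1^{1/2}h) = \sum_i c_i\, \nu_1^{1/2} e_i$ in a basis $\{\nu_1^{1/2} e_i\}$ of $\mathcal N_1$ (with $\{e_i\}$ any basis of $N_1$) and choosing $\phi = e_j$ turns the right-hand side into the linear system $Mc = 0$ with
\begin{equation*}
M_{ij} \,:=\, \int_{\RR^d} \nu_1(v)\, e_i(v)\, e_j(v)\, F_1^{-1}(v)\, dv.
\end{equation*}
The lower bound $\nu_1(v) \geq \nu_{0,1}(1+|v|) > 0$ from \eqref{nuCrois} makes $M$ the Gram matrix of a genuine scalar product on the finite-dimensional space $N_1$, hence strictly positive definite; therefore $c = 0$, i.e. $\Pi(\nu_1^{1/2}h) = 0$.

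Feeding this information back into the key identity yields $\mathcal L_1 h = 0$, so $h \in N_1$ and $\nu_1^{1/2}h \in \nu_1^{1/2} N_1 = \mathcal N_1$, which is precisely the image of $\Pi$. But then $\Pi(\nu_1^{1/2}h) = \nu_1^{1/2}h$, which combined with $\Pi(\nu_1^{1/2}h)=0$ forces $h = 0$, as required. The main technical obstacle I anticipate is of a bookkeeping nature: reconciling the $L^1(m^{-1})$ functional setting in which the lemma is stated with the $L^2\bigl(F_1^{-1}\bigr)$ Hilbertian structure that underlies both the projection $\Pi$ and the self-adjointness of $\mathcal L_1$, and in particular extending the usual orthogonality $\mathrm{Range}(\mathcal L_1)\perp \ker \mathcal L_1$ to the pair $(h,\phi)$ considered here. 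Once the exponential weight $m^{-1}$ is used to absorb polynomial test functions, everything collapses to the finite-dimensional positive definite linear algebra sketched above.
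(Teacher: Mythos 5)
Your proposal is correct and follows essentially the same route as the paper: rewrite the fixed-point equation as $\mathcal L_1 h = \nu_1^{1/2}\,\Pi\,\nu_1^{1/2}h$, pair against the collision invariants in $N_1$ to kill the left-hand side, deduce $\Pi\,\nu_1^{1/2}h=0$ (your Gram-matrix computation is just an explicit version of the paper's ``orthogonal to the whole range of $\Pi$, hence zero''), and then bootstrap $\mathcal L_1 h=0$, $h\in N_1$, $\nu_1^{1/2}h=\Pi\,\nu_1^{1/2}h=0$. The only cosmetic difference is that you spell out the positive-definiteness of the weighted Gram matrix and the integrability bookkeeping between $L^1(m^{-1})$ and the $L^2(F_1^{-1})$ pairing, which the paper leaves implicit.
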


		\begin{proof}
			If there is an $h\in L^1(m^{-1})$ such that $\nu_1^{-1/2} \mathcal S_1 \, \nu_1^{1/2} h = h$, then according to \eqref{defK},
			\begin{align}
				\LL_1 \, h & = - \nu_1 h + \nu_1^{1/2} ( \Pi + \mathcal S_1 ) \, \nu_1^{1/2} h \notag \\
				 & = \nu_1^{1/2} \,\Pi \, \nu_1^{1/2} h. \label{Pimuf}
			\end{align}
			Projecting this equation upon the null space of $\LL_1$, using \eqref{Pimuf}, we obtain for all $\varphi \in N_1$  that
			\begin{align*}
			  0 & = \langle \LL_1 h, \varphi \rangle \\
			    & = \langle \Pi \, \nu_1^{1/2} h, \nu_1^{1/2} \, \varphi \rangle,
			\end{align*}
			and then $\Pi \, \nu_1^{1/2} h$ is orthogonal to $ \nu_1^{1/2} N_1$, which is the whole range of $\Pi$. Then, necessarily, as $\Pi$ is a projection, we have $\Pi \, \nu_1^{1/2} h = 0$. It means according to \eqref{Pimuf} that $\LL_1 \, h =0$, and then that $h \in N_1$, which is absurd because $\Pi$ is a projection onto $\nu_1^{1/2} N_1$ thus
			\[    \nu_1^{1/2} h = \Pi \, \nu_1^{1/2} h = 0. \]

		\end{proof}

		Let us now denote by $\Phi_{(\lambda, \gamma, \alpha)}$ the operator
		\begin{equation*}
			\Phi_{(\lambda, \gamma, \alpha)} = \left ( \nu_\alpha(v) + \lambda + i \, (\gamma \cdot v)  - (1-\alpha) \Delta_v\right )^{-1} \nu_\alpha(v).
		\end{equation*}
		If the triple $(\gamma, \lambda, h)$ is solution to the eigenvalue problem \eqref{pbEigen}, then we can write using the decomposition \eqref{defK}
		\begin{align}
			h & = \left ( \nu_\alpha(v) + \lambda + i \, (\gamma \cdot v)  - (1-\alpha) \Delta_v\right )^{-1} \nu_\alpha(v) \, (\nu_\alpha(v))^{-1} \LL_\alpha^c h \notag \\
			 & = \Phi_{(\lambda, \gamma, \alpha)} \nu_\alpha^{-1/2} \left ( \Pi + \mathcal S_\alpha \right) \nu_\alpha^{1/2} h. \label{eqfPhi}
		\end{align}
		This will allow us to rewrite the eigenvalues problem with bounded operators. 		
		To this purpose, we state a technical lemma about the asymptotic behavior of the operator $\Phi_{(\lambda, \gamma, \alpha)}$.
		
		\begin{lemma}
		  \label{lemAsymptBehavPhi}
		  For all $g \in W^{2,1}(m^{-1})$, we have
		  \begin{equation*}
		    \left \| \left ( \Phi_{(\lambda, \gamma, \alpha)} - \id \right ) g \right \|_{L^1(m^{-1})} \leq \ve(\lambda, \gamma, \alpha) \|g \|_{W^{2,1}(m^{-1})} ,
      \end{equation*}		
      where $\lim_{(\lambda, \gamma, \alpha) \to (0,0,1)} \ve(\lambda, \gamma, \alpha) = 0$.
		\end{lemma}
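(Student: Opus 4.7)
The main identity is the factorization
\[
(\Phi_{(\lambda,\gamma,\alpha)} - \id)\, g \;=\; A^{-1}\bigl(\nu_\alpha - A\bigr)g \;=\; A^{-1}\bigl(-\lambda g - i(\gamma\cdot v)g + (1-\alpha)\Delta_v g\bigr),
\]
where $A := -(1-\alpha)\Delta_v + \nu_\alpha + \lambda + i(\gamma\cdot v)$, obtained from the definitions $A\Phi = \nu_\alpha$ and $A\cdot\id = A$. The proof then reduces to two ingredients: a uniform bound $\|A^{-1}\|_{L^1(m^{-1})\to L^1(m^{-1})} \le C$ for $(\lambda,\gamma,\alpha)$ close to $(0,0,1)$, and a smallness estimate on the right-hand side of order $|\lambda| + |\gamma| + (1-\alpha)$ times $\|g\|_{W^{2,1}(m^{-1})}$.

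For the first ingredient, I would set $h = A^{-1} f$ and run an $L^{1}$ energy estimate: multiply $Ah = f$ by a regularized version of $\overline{\sgn(h)}\, m^{-1}$, integrate in $v$, and take real parts. The imaginary multiplier $i(\gamma\cdot v)|h|$ drops out. For the diffusion term, Kato's inequality $\Re(\overline{\sgn(h)}\Delta_v h) \le \Delta_v|h|$ combined with integration by parts transfers the Laplacian onto the weight, yielding
\[
-(1-\alpha)\int \Re\bigl(\overline{\sgn(h)}\Delta_v h\bigr)\, m^{-1}\, dv \;\ge\; -(1-\alpha)\int |h|\, \Delta_v m^{-1}\, dv \;\ge\; -(1-\alpha)\, C_m\, \|h\|_{L^1(m^{-1})},
\]
where $C_m<\infty$ because $m^{-1}=e^{a|v|^s}$ with $0<s<1$ makes $\Delta_v m^{-1}/m^{-1}$ uniformly bounded. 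Together with $\nu_\alpha(v)\ge\nu_{0,\alpha}$ from \eqref{nuCrois}, this gives
\[
\bigl(\nu_{0,\alpha} + \Re\lambda - (1-\alpha)C_m\bigr)\|h\|_{L^1(m^{-1})} \;\le\; \|f\|_{L^1(m^{-1})},
\]
and the prefactor is bounded below by a positive constant for $(\lambda,\alpha)$ close to $(0,1)$.

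For the second ingredient, the three terms on the right-hand side of the identity are controlled directly:
\[
\bigl\|-\lambda g - i(\gamma\cdot v)g + (1-\alpha)\Delta_v g\bigr\|_{L^1(m^{-1})} \;\le\; |\lambda|\|g\|_{L^1(m^{-1})} + |\gamma|\|\langle v\rangle g\|_{L^1(m^{-1})} + (1-\alpha)\|\Delta_v g\|_{L^1(m^{-1})},
\]
each of which is absorbed by $\|g\|_{W^{2,1}(m^{-1})}$ (reading this norm on the natural domain $W_1^{2,1}(m^{-1})$ of $\LL_{(\alpha,\gamma)}$, equivalently allowing a mild loss of exponential weight from $m$ to a slightly smaller one in order to convert the polynomial factor $\langle v\rangle$ into $m^{-1}$). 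Combining with the uniform bound on $A^{-1}$ produces
\[
\|(\Phi_{(\lambda,\gamma,\alpha)}-\id)g\|_{L^1(m^{-1})} \;\le\; C\bigl(|\lambda|+|\gamma|+(1-\alpha)\bigr)\|g\|_{W^{2,1}(m^{-1})},
\]
which is the claim with $\varepsilon(\lambda,\gamma,\alpha)\to 0$ as $(\lambda,\gamma,\alpha)\to(0,0,1)$.

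The main obstacle is the uniform $L^1(m^{-1})$ bound on $A^{-1}$: unlike the $L^2$ setting used in Proposition~\ref{propLocalisEssSpect}, the $L^1$ multiplier against $\overline{\sgn(h)}m^{-1}$ is not smooth, so one has to justify the energy inequality by approximation (\emph{e.g.} with $h_\varepsilon := \sqrt{|h|^2 + \varepsilon^2}$) and invoke Kato's inequality carefully. The small diffusion $(1-\alpha)\Delta_v$ is the other delicate point, but it is controlled by the dissipation $\nu_{0,\alpha}$ precisely because $s<1$ in the weight $m$ forces $\Delta_v m^{-1}/m^{-1}$ to be bounded; this is exactly what prevents the diffusive thermostat from destroying the dissipation in the weighted $L^1$ setting.
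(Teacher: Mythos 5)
Your factorization $(\Phi_{(\lambda,\gamma,\alpha)}-\id)\,g = A^{-1}\bigl(-\lambda g - i(\gamma\cdot v)g + (1-\alpha)\Delta_v g\bigr)$ is correct, and your route is genuinely different from the paper's. The paper isolates the diffusive part only: it reduces the lemma to the smallness of $\mathcal T_\ve := (\id-\ve\Delta_v)^{-1}-\id = \mathcal R(1/\ve,\Delta_v)\Delta_v$ from $W^{2,1}(m^{-1})$ to $L^1(m^{-1})$, which it gets from the known sectorial resolvent estimate for the Laplacian on exponentially weighted $L^1$ spaces \cite{CholewaDlotko:2004}; the multiplication part $(\lambda+i(\gamma\cdot v))/C_{(\lambda,\alpha,\gamma)}$ is then harmless because the denominator contains $\nu_\alpha\gtrsim \nu_{0,\alpha}(1+|v|)$. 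You instead prove a uniform $L^1(m^{-1})$ bound on the resolvent of the full Schr\"odinger-like operator $A=\lambda-D_{(\alpha,\gamma)}$ by a direct duality/Kato argument, which is more self-contained (no external resolvent estimate) and is the natural $L^1$ analogue of the $L^2$ decay computation \eqref{ineqDecaySchrod} already used in Proposition \ref{propLocalisEssSpect}.

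Two points in your execution need care. First, the intermediate term $|\gamma|\,\|\langle v\rangle g\|_{L^1(m^{-1})}$ is not controlled by $\|g\|_{W^{2,1}(m^{-1})}$, and your proposed remedy (a mild loss of exponential weight) proves a statement weaker than the lemma, whose two norms carry the same $m$. The loss is avoidable within your own scheme: write $A^{-1}=\Phi_{(\lambda,\gamma,\alpha)}\,\nu_\alpha^{-1}$, so that the uniformly bounded object is $A^{-1}\nu_\alpha=\Phi_{(\lambda,\gamma,\alpha)}$ and the transport term appears as $\nu_\alpha^{-1}(\gamma\cdot v)g$; then \eqref{nuCrois} gives $|\gamma\cdot v|/\nu_\alpha(v)\le |\gamma|/\nu_{0,\alpha}$ uniformly in $v$, which is exactly the cancellation the paper's formulation exploits automatically. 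Second, $\Delta_v m^{-1}/m^{-1}$ is \emph{not} uniformly bounded for $m^{-1}=e^{a|v|^s}$ with $0<s<1$: it behaves like $|v|^{s-2}$ near the origin, so the pointwise inequality $\Delta_v m^{-1}\le C_m\,m^{-1}$ fails there. This is repairable (replace $|v|^s$ by the equivalent smooth weight $\langle v\rangle^s$, or modify $m$ near $v=0$), but it must be said; together with the regularization needed to justify Kato's inequality for complex-valued $h$, these are the technical prices of your more elementary argument.
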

		
		\begin{proof}
		  
		  If we set $C_{(\lambda,\, \alpha, \, \gamma)} = \lambda + i \, (\gamma \cdot v) +\nu_\alpha$,  we can write for all $v \in \RR^d$
		  \begin{align*}
		    \Phi_{(\lambda, \gamma, \alpha)} g (v) - g (v) & = \left ( C_{(\lambda,\, \alpha, \, \gamma)}(v)  - (1-\alpha) \Delta_v\right )^{-1} \nu_\alpha(v) g (v) - g (v) \\
		     & = \frac{1}{C_{(\lambda,\, \alpha, \, \gamma)}} \left ( \id - \frac{1-\alpha}{C_{(\lambda,\, \alpha, \, \gamma)}(v)} \Delta_v \right )^{-1}  C_{(0,0,1)} g (v) - g (v).
		  \end{align*}
		  Then, to prove the Lemma, we need to prove that the norm of the operator 
		  \[\mathcal T_\ve := \left ( \id - \ve \, \Delta_v \right )^{-1} - \id,\]
		  defined from $W^{2,1}$ onto $L^1$, can be made arbitrarily small for small $\ve$. 
		  
		  Let us first reformulate this operator using the resolvent of the Laplace operator $\Delta_v$. Since we have
		  \[ \left (\id - \ve \Delta_v\right ) \mathcal{T}_\ve = \ve \Delta_v,\]
		  we can rewrite $\mathcal{T}_\ve$ as\footnote{If it has been possible to conduct this study on a unweighted $L^2$ space, it would have been enough to notice the following Fourier representation: 
		  \[\mathcal{F}_v\left (\mathcal T_\ve \,f \right )(\xi) = -\frac{\ve \, |\xi|^2}{1 + \ve \, |\xi|^2} \mathcal{F}_v(f)(\xi).\]}
		  \begin{align}
		    \mathcal{T}_\ve & =\left ( \frac{1}{\ve}\id - \Delta_v\right )^{-1} \Delta_v =: \mathcal{R}\left ( \frac1\ve , \Delta_v\right ) \Delta_v \label{eqReprResolvTve}.
		  \end{align}
		  It is known from \cite{CholewaDlotko:2004} (and classical for unweighted $L^p$ spaces) that the resolvent of the Laplace operator on exponentially weighted $L^1$ spaces verifies for an explicit real constant $a$ and a nonnegative constant $C$
		  \begin{equation}
		    \label{boundResolvLaplace}
		    \left \|\mathcal{R}\left ( \lambda , \Delta_v\right ) h\right \|_{L^1(m^{-1})} \leq \frac{C}{\lambda -a} \left \| h\right \|_{L^1(m^{-1})},
		  \end{equation}
		  for any $\lambda$ in an unbounded angular sector of the complex plane which does not contains $a$. 
		  In particular, using this inequality in the identity \eqref{eqReprResolvTve}, we have that for any $g \in W^{2,1}(m^{-1})$
		  \[ \left \| \mathcal{T}_\ve \, g\right \|_{L^1(m^{-1})} \leq \frac{\ve \, C}{1 - a \,\ve} \left \| g\right \|_{W^{2,1}(m^{-1})} \to_{\ve \to 0} 0,\]
		  which concludes the proof of the Lemma.

		\end{proof}
		
    In all the following of this section, we shall use the polar decomposition $\gamma = \rho \, \omega$ for $\rho \geq 0$ and $\omega \in \SSS^{d-1}$ of the frequency $\gamma$. 
    We prove an invertibility result, which is needed to rewrite the eigenvalue problem \eqref{pbEigen} using bounded operators:

		\begin{lemma} 
		  \label{lemInvPsi}
			There exist $\alpha_3 \in (\alpha_2, 1]$ and 
			some open sets $U_1 \times U_2 \subset\mathbb{R} \times \mathbb{C} $, neighborhood of $(0,0)$ such that if $(\rho, \lambda,\alpha) \in U_1 \times U_2 \times(\alpha_3,1]$, then for all $\omega \in \mathbb{S}^{d-1}$, the operator 
			\begin{equation*}
				\Psi_{(\lambda, \, \rho\, \omega,\, \alpha)} := \id \, - \, \Phi_{(\lambda, \, \rho \, \omega, \alpha)} \nu_\alpha^{-1/2} \mathcal S_\alpha \, \nu_\alpha^{1/2}
			\end{equation*}
			has a bounded inverse on $ L^1(m^{-1})$.
		\end{lemma}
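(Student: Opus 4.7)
The plan is to prove invertibility by a perturbation argument around the reference point $(\lambda,\rho,\alpha) = (0,0,1)$, where $\Psi$ takes the simple form $\Psi_{(0,0,1)} = \id - \nu_1^{-1/2}\mathcal S_1\,\nu_1^{1/2}$. At that point, the Fredholm alternative applies: the operator $\mathcal L_1^c$ is compact on $L^1(m^{-1})$ (this is the Mouhot decomposition recalled in the paper), hence $\mathcal S_1$ and therefore $\nu_1^{-1/2}\mathcal S_1\,\nu_1^{1/2}$ are compact on $L^1(m^{-1})$. By Lemma \ref{Lemvp1}, the number $1$ is not an eigenvalue of this operator, so $\ker\Psi_{(0,0,1)} = \{0\}$, and Fredholm theory then yields the bounded invertibility of $\Psi_{(0,0,1)}$ on $L^1(m^{-1})$.

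Next I would show that $\Psi_{(\lambda,\rho\omega,\alpha)}$ depends continuously on $(\lambda,\rho,\alpha)$ in the operator norm of $\mathcal L(L^1(m^{-1}))$ near $(0,0,1)$, so that a small perturbation of an invertible operator remains invertible (with uniform bound on the inverse). For this I decompose
\begin{equation*}
  \Psi_{(\lambda,\rho\omega,\alpha)} - \Psi_{(0,0,1)}
  = \bigl(\id - \Phi_{(\lambda,\rho\omega,\alpha)}\bigr)\,\nu_\alpha^{-1/2}\mathcal S_\alpha\,\nu_\alpha^{1/2}
   \; + \; \bigl(\nu_1^{-1/2}\mathcal S_1\,\nu_1^{1/2} - \nu_\alpha^{-1/2}\mathcal S_\alpha\,\nu_\alpha^{1/2}\bigr).
\end{equation*}
The second difference is controlled by the Hölder continuity of the gain term in operator norm with respect to $\alpha$ (Proposition~\ref{propHoldContOp}) together with the smoothness and boundedness of $\nu_\alpha$ relative to $\nu_1$ coming from \eqref{nuCrois}; this contribution tends to zero as $\alpha \to 1$.

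The delicate term is the first one, since Lemma \ref{lemAsymptBehavPhi} only controls $\id - \Phi_{(\lambda,\rho\omega,\alpha)}$ from $W^{2,1}(m^{-1})$ into $L^1(m^{-1})$, not from $L^1$ into itself. The key ingredient I would use is the regularizing property of the inelastic gain term: the operator $\mathcal L_\alpha^c = 2\mathcal Q_\alpha^+(F_\alpha,\cdot) - F_\alpha L(\cdot)$ maps $L^1(m^{-1})$ continuously into $W^{2,1}(m^{-1})$ (this is the smoothing estimate on $\mathcal Q_\alpha^+(F_\alpha,\cdot)$ available because $F_\alpha\in\mathcal S(\mathbb R^d)$, as used throughout~\cite{mischler:20091,mischler:20092}), and the multiplications by $\nu_\alpha^{\pm 1/2}$ preserve this chain of Sobolev spaces thanks to the bounds \eqref{nuCrois} and the smoothness of $F_\alpha$. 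Combining this gain of regularity with Lemma \ref{lemAsymptBehavPhi} yields
\begin{equation*}
  \bigl\|\bigl(\id - \Phi_{(\lambda,\rho\omega,\alpha)}\bigr)\,\nu_\alpha^{-1/2}\mathcal S_\alpha\,\nu_\alpha^{1/2}\, h\bigr\|_{L^1(m^{-1})}
  \leq \ve(\lambda,\rho\omega,\alpha)\,\|h\|_{L^1(m^{-1})},
\end{equation*}
with $\ve \to 0$ as $(\lambda,\rho,1-\alpha)\to 0$, uniformly in $\omega\in\mathbb S^{d-1}$ by rotational invariance of $\nu_\alpha$ and of $L^1(m^{-1})$.

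Putting these two estimates together gives $\|\Psi_{(\lambda,\rho\omega,\alpha)} - \Psi_{(0,0,1)}\|_{\mathcal L(L^1(m^{-1}))} \to 0$. Choosing $U_1\times U_2$ and $\alpha_3\in(\alpha_2,1]$ so that this norm stays below $\|\Psi_{(0,0,1)}^{-1}\|^{-1}$ on $U_1\times U_2\times(\alpha_3,1]$, the standard Neumann series argument produces a bounded inverse of $\Psi_{(\lambda,\rho\omega,\alpha)}$, independent of $\omega\in\mathbb S^{d-1}$. The main technical obstacle is the first difference: one must really use the $L^1 \to W^{2,1}$ smoothing of the gain term to convert the $W^{2,1}$-based estimate of Lemma \ref{lemAsymptBehavPhi} into a genuine $L^1(m^{-1})$ operator bound, otherwise the perturbation argument cannot close.
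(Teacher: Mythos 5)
Your opening step (invertibility of $\Psi_{(0,0,1)}=\id-\nu_1^{-1/2}\mathcal S_1\,\nu_1^{1/2}$ via compactness of $\mathcal S_1$, Lemma \ref{Lemvp1} and the Fredholm alternative) is sound and consistent with the paper. The gap is exactly where you flag ``the main technical obstacle'': your Neumann-series strategy needs the \emph{operator-norm} estimate
\begin{equation*}
\bigl\|\bigl(\id-\Phi_{(\lambda,\rho\omega,\alpha)}\bigr)\,\nu_\alpha^{-1/2}\mathcal S_\alpha\,\nu_\alpha^{1/2}\bigr\|_{L^1(m^{-1})\to L^1(m^{-1})}\longrightarrow 0,
\end{equation*}
and to get it from Lemma \ref{lemAsymptBehavPhi} you assert that $\LL_\alpha^c=2\,\Q_\alpha^+(F_\alpha,\cdot)-F_\alpha L(\cdot)$ maps $L^1(m^{-1})$ boundedly into $W^{2,1}(m^{-1})$. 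That smoothing estimate is not proven in the paper, is not among the results recalled from \cite{mischler:20091,mischler:20092} (the appendix gives the \emph{opposite} direction, $\|\LL_\alpha\|_{W^{k+2,1}_{q+1}(m^{-1})\to W^{k,1}_q(m^{-1})}\leq C$, and a regularizing bound only for the truncated--mollified operator $A_\delta$, not for the full gain term), and is doubtful: the known regularization of $\Q^+$ is a gain of roughly $(d-1)/2$ derivatives in $L^2$-based spaces, and the whole point of the $A_\delta$/$B_{\alpha,\delta}$ splitting \eqref{defABops} in those references is precisely that the untruncated gain term does not by itself provide such smoothing in $L^1(m^{-1})$. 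As written, your perturbation argument does not close.

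The paper avoids needing any operator-norm smallness. It observes that $\Phi_{(\lambda,\rho\omega,\alpha)}\nu_\alpha^{-1/2}\mathcal S_\alpha\,\nu_\alpha^{1/2}$ is compact, so by the Fredholm alternative it suffices to rule out nontrivial solutions of $h=\Phi\,\nu_\alpha^{-1/2}\mathcal S_\alpha\,\nu_\alpha^{1/2}h$ for small parameters; this is done by contradiction along sequences $(\lambda_n,\gamma_n,\alpha_n)\to(0,0,1)$, $\|h_n\|=1$. Compactness gives $\nu_{\alpha_n}^{-1/2}\mathcal S_{\alpha_n}\nu_{\alpha_n}^{1/2}h_n\to g$ along a subsequence, and Lemma \ref{lemAsymptBehavPhi} is applied only to this \emph{single fixed} function $g$, whose membership in $W^{2,1}(m^{-1})$ is obtained a posteriori from the elliptic-regularity bound $\|h\|_{W^{2,1}(m^{-1})}\leq C\|h\|_{L^1(m^{-1})}$ valid for solutions of the eigenvalue equation, via $h=B_{\alpha,\delta}^{-1}A_\delta h$ and the fact that $A_\delta$ (the truncated--mollified operator, not $\LL_\alpha^c$) maps $L^1$ into $W^{\infty,1}_\infty(m^{-1})$. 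If you want to keep a quantitative Neumann-series route, you would have to either prove a genuine $L^1(m^{-1})\to W^{s,1}(m^{-1})$ gain for $\mathcal S_\alpha$ (even $s\in(0,2)$ would do after interpolating the estimate on $\mathcal T_\ve$ in Lemma \ref{lemAsymptBehavPhi}), or insert the $A_\delta$/$B_{\alpha,\delta}$ decomposition into your first difference; neither is a one-line step.
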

		
		\begin{proof}
		
			We have seen that $\nu_\alpha^{1/2} \mathcal S_\alpha \, \nu_\alpha^{1/2}$ is a compact operator on $L^1(m^{-1})$. 
			Moreover, we have according to the representation \eqref{eqDecompoEigVal} (with $D_{(\alpha, \, \gamma)}$ given by \eqref{defDK})
			\begin{align*}
			  \Phi_{(\lambda, \, \rho \, \omega, \, \alpha)} & = \left ( \nu_\alpha(v) + \lambda + i \, (\gamma \cdot v)  - (1-\alpha) \Delta_v\right )^{-1} \nu_\alpha(v) \\
			   & =  \left ( \lambda - D_{(\alpha, \, \gamma)} \right )^{-1} \nu_\alpha(v)
			\end{align*}
			which is a bounded operator (at least for small frequencies $\gamma$), thanks to the analysis we led on Section \ref{secLocaliz}, and particularly from the localization of the discrete spectrum of $\LL_{(\alpha, \, \gamma)}$ of Proposition \ref{propLocalisDiscSpect}.
			Hence, the operator
			\[ \Phi_{(\lambda, \, \rho \, \omega, \, \alpha)} \nu_\alpha^{-1/2} \mathcal S_\alpha \, \nu_\alpha^{1/2} = \left ( \Phi_{(\lambda, \, \rho \, \omega, \alpha)} \nu_\alpha^{-1} \right ) \left ( \nu_\alpha^{1/2} \mathcal S_\alpha \, \nu_\alpha^{1/2} \right )\] 
			is compact on $L^1(m^{-1})$. 
			
			By Fredholm alternative, it just remains to show that for  $(\rho, \lambda)$ small enough and $\alpha$ close to $1$, there is no non-trivial solutions $h \in L^1(m^{-1})$ of
			\begin{equation}
			  \label{eqFredholm}
				h =  \Phi_{(\lambda, \, \rho \, \omega, \, \alpha)}\nu_\alpha^{-1/2} \mathcal S_\alpha \, \nu_\alpha^{1/2} h.
			\end{equation}
			Let us do this by contradiction.  
			Assume that there are sequences $(\lambda_n, \, \gamma_n, \, \alpha_n)_n \to (0,0,1)$ and $(h_n)_n \subset L^1(m^{-1})$, $\|h_n\| = 1$ solutions to \eqref{eqFredholm}. 
			First, we notice thanks to the continuity of the equilibrium profiles $F_\alpha$ with respect to $\alpha$ (recalled in Proposition \ref{propCvMaxwProfil}) and to the smoothness properties of these profiles (recalled in Proposition \ref{propRegSelfSimProf}) that we have 
			\[ \lim_{\alpha \to 1} \|\nu_{\alpha} - \nu_1 \|_{L^\infty} = 0.\] 
			Moreover, according to Lemma \ref{lemVracProp}, the operator $\LL_\alpha$ converges towards $\LL_1$ in the norm of graph in $L^1(m^{-1})$. Then, the operator $\nu_{\alpha_n}^{-1/2} \mathcal S_{\alpha_n} \, \nu_{\alpha_n}^{1/2}$, which is compact on $L^1(m^{-1})$, converges towards the operator $\nu_1^{-1/2} \mathcal S_1 \, \nu_1^{1/2} $, and we have up to a subsequence,
			\begin{equation*}
				\nu_{\alpha_n}^{-1/2} \mathcal S_{\alpha_n} \, \nu_{\alpha_n}^{1/2} h_n \to_{n \to \infty} g \in L^1(m^{-1}).
			\end{equation*}
			Thus, if we write $\Phi_n := \Phi_{(\lambda_n, \, \gamma_n, \, \alpha_n)}$, given that $h_n$ is solution to \eqref{eqFredholm}, we have 
			\begin{equation} 
			  \label{eqfnPhin}
				h_n = \Phi_n \left[ \nu_{\alpha_n}^{-1/2}  \mathcal S_{\alpha_n} \, \nu_{\alpha_n}^{1/2} h_n - g \right] + \Phi_n \, g.
			\end{equation}
			But, according to Lemma \ref{lemAsymptBehavPhi} we have
			\begin{equation}
			  \label{eqLimPhingG}
			  \lim_{n \to \infty } \| \Phi_n g - g \|_{L^1(m^{-1})} = 0.
			\end{equation}
			Hence, by Lebesgue dominated convergence Theorem, this implies with identity \eqref{eqfnPhin} that the sequence $(h_n)_n$ strongly converges towards $g$ (using also the fact that the sequence $(\Phi_n g)_n$  is bounded in $L^1(m^{-1})$ for large $n$ thanks to \eqref{eqLimPhingG}).
			Therefore, we have by continuity
			\[g = \lim_{n \to \infty} \nu_{\alpha_n}^{-1/2} \mathcal S_{\alpha_n} \, \nu_{\alpha_n}^{1/2} h_n = \nu_1^{-1/2} \mathcal S_1 \, \nu_1^{1/2} g,\]
			namely $g$ is an eigenvector%
			\footnote{This justifies the use of Lemma \ref{lemAsymptBehavPhi} in equation \eqref{eqLimPhingG}. Indeed, the solutions $h$ to this eigenvalue problem are such that 
		  \[ \| h \|_{W^{2,1}(m^{-1})} \le C \, \| h \|_{L^1(m^{-1})}, \]
		  for $C \geq 0$, as was shown for instance in \cite{mischler:20091} using the decomposition \eqref{defABops}: 
		  \[0 = (\LL_{\alpha, \gamma } - \lambda) h = \left (A_\delta - B_{\alpha, \, \delta} \left (\lambda + i \left (\gamma \cdot v \right )\right )\right )h.\]} 
		  for $\nu_1^{-1/2} \mathcal S_1 \, \nu_1^{1/2}$ associated to the eigenvalue $1$, which is absurd according to Lemma \ref{Lemvp1}. 
			Finally, the operator $\id \, - \, \Phi_{(\lambda, \,\gamma, \, \alpha)} \nu_\alpha^{-1/2} S \,\nu_\alpha^{1/2}$ is invertible on $L^1(m^{-1})$ for small $(\lambda, \, \gamma)$ and $\alpha$ close to $1$.

		\end{proof}

		Let us now rewrite the relation \eqref{eqfPhi} as
		\begin{align*}
			\Psi_{(\lambda,\gamma, \, \alpha)} h & = \left[ \id - \Phi_{(\lambda, \gamma, \, \alpha)} \nu_\alpha^{-1/2} \mathcal S_\alpha \, \nu_\alpha^{1/2} \right]  \Phi_{(\lambda, \gamma)} \nu_\alpha^{-1/2} \left( \Pi + \mathcal S_\alpha \right) \nu_\alpha^{1/2} h \\
			 & = \Phi_{(\lambda, \gamma, \, \alpha)}  \left \{ \nu_\alpha^{-1/2} \Pi \,\nu_\alpha^{1/2} + \nu_\alpha^{-1/2} \mathcal S_\alpha \, \nu_\alpha^{1/2} \left[ \id - \Phi_{(\lambda, \gamma)} \left ( \Pi + \mathcal S_\alpha \right) \nu_\alpha^{1/2} \right ] \right \}h.
		\end{align*} 
		According to Lemma \ref{lemInvPsi}, $\Psi_{(\lambda,\gamma, \, \alpha)}$ is invertible for small $(\lambda, \gamma)$ and $\alpha \in (\alpha_3, 1]$. Then, provided that $h$ is solution to \eqref{eqfPhi}, we have
		\begin{equation} 
		  \label{eqfPsi}
			h = \Psi_{(\lambda,\gamma, \, \alpha)}^{-1} \Phi_{(\lambda, \gamma, \, \alpha)} \nu_\alpha^{-1/2} \Pi \,\nu_\alpha^{1/2} h .
		\end{equation}
		Let us introduce the ``conjugated operator'' $\mathcal{P} :=\nu_\alpha^{-1/2} \Pi \,\nu_\alpha^{1/2} = \mathbb P \,\nu_\alpha^{1/2}$, where $\mathbb P$ is the projection onto the space of elastic collisional invariants $N_1$.
		We can use it to rewrite  \eqref{eqfPsi} (and then the eigenvalue problem \eqref{pbEigen}) as the following finite dimensional system of equations
		\begin{equation} \label{eqPfPpsi}
			\mathcal P h = \mathcal P \Psi_{(\lambda,\gamma, \, \alpha)}^{-1} \Phi_{(\lambda, \gamma, \, \alpha)} \mathcal P h,
		\end{equation}		 
		which can be understood as: 
		
		\squaredBox{
			\emph{Finding $X_{(\lambda,\gamma, \, \alpha)} = (x_0, \ldots, x_{d+1}) \in \RR^{d+2}$ such that
			\begin{equation*} 
				\left (A_{(\lambda,\gamma, \, \alpha)} - \id\right ) X_{(\lambda,\gamma, \, \alpha)} = 0,
			\end{equation*}
			where $A = \mathcal P \Psi^{-1} \Phi \in \mathcal M_{d+2,\,d+2}(\RR)$.}
		}
		
		\noindent We shall find in the following section some conditions on $\lambda$ in order to have 
		\begin{equation*}
			\det\left (A_{(\lambda,\gamma, \, \alpha)} - \id\right ) = 0.
		\end{equation*}
		Under such conditions, the abstract problem will admit a non-trivial solution $X$. Coming back to the original problem, given that $X = \mathcal P h = \mathbb P \nu_\alpha^{1/2} \, h$, we will obtain thanks to equation \eqref{eqfPsi} a solution $h$ to the original eigenvalue problem \eqref{pbEigen}.
		
	\subsection{Finite Dimensional Resolution} 
	  \label{subFinitDim}
	
		We shall study the vector $X$ component-wise, using the normalization
		\begin{equation} 
		  \label{eqXVect}
			X = \mathbb P \, \nu_\alpha^{1/2} \, h(v) = x_0 \, F_1(v) + (x \cdot v) \, F_1(v) + x_{d+1} \left (|v|^2 - c_\nu\right ) F_1(v),
		\end{equation}
		where $c_\nu$ is such that $\langle \nu_\alpha, |v|^2 \rangle = \langle \nu_\alpha, c_\nu\rangle$.
		If we compute the product of \eqref{eqXVect} with elements of 
		\[\mathcal N_\alpha = \vect \{ \nu_\alpha^{1/2} F_1, \, \nu_\alpha^{1/2} v_i \, F_1 , \, \nu_\alpha^{1/2} \left (|v|^2 -c_\nu\right ) F_1 : \, 1 \leq i \leq d \},\]
		 we find using the definition of $\mathbb P$ and the orthogonality of elements of $N_\alpha$ for $\langle \cdot, \cdot \rangle$ that
		\begin{equation}
		  \label{eqProjection}
			\left\{\begin{aligned}
				& \langle \nu_\alpha^{1/2}h, \,\nu_\alpha^{1/2} F_1 \rangle = x_0 \left \langle \nu_\alpha^{1/2} F_1, \, \nu_\alpha^{1/2} F_1 \right  \rangle, \\ \medskip
				& \langle \nu_\alpha^{1/2} h, \, \nu_\alpha^{1/2} \, v_i \, F_1\rangle = x_i \left \langle \nu_\alpha^{1/2} \, v_i \, F_1, \nu_\alpha^{1/2} \, v_i \, F_1 \right  \rangle, \quad \forall 1 \leq i \leq d, \\ \medskip
				& \langle \nu_\alpha^{1/2} h, \, \nu_\alpha^{1/2} \left (|v|^2-c_\nu\right ) F_1 \rangle = x_{d+1} \left \langle \nu_\alpha^{1/2} \left (|v|^2-c_\nu\right ) F_1, \nu_\alpha^{1/2} \left ( |v|^2-c_\nu \right ) F_1 \right \rangle.
			\end{aligned} \right.
		\end{equation}
		For clarity sake, let us set in the following
		\begin{equation*}
			\langle \phi, \psi \rangle_{F_1} := \int_{\RR^d} \phi(v) \, \overline{\psi}(v) \, F_1(v) \,dv,
		\end{equation*}		
		in order to have
		\[ \left \langle \nu_\alpha^{1/2} \, h_1 \, F_1, \nu_\alpha^{1/2} \, h_2 \, F_1 \right  \rangle = \langle \nu_\alpha h_1, h_2 \rangle_{F_1} .\]
		
		Using \eqref{eqfPsi} together with the relations \eqref{eqProjection}, we obtain for all $1 \leq i \leq d$
		\begin{equation} 
		  \label{eqSystXi}
			\left\{\begin{aligned}
				& x_0 \left \langle \nu_\alpha, 1 \right \rangle_{F_1} = x_0 \left \langle \nu_\alpha, T_\gamma 1 \right \rangle_{F_1} + \left \langle \nu_\alpha, T_\gamma \, (x \cdot v) \right \rangle_{F_1} + x_{d+1} \left \langle \nu_\alpha, T_\gamma \left (|v|^2-c_\nu\right ) \right \rangle_{F_1}, \\ \medskip
				& x_i \left \langle \nu_\alpha \, v_i, v_i \right \rangle_{F_1} = x_0 \left \langle \nu_\alpha \, v_i, T_\gamma 1 \right \rangle_{F_1} + \left \langle \nu_\alpha \, v_i, T_\gamma \, (x \cdot v) \right \rangle_{F_1} + x_{d+1} \left \langle \nu_\alpha \, v_i, T_\gamma \left (|v|^2-c_\nu\right ) \right \rangle_{F_1}, \\ \medskip
				& x_{d+1} \left \langle \nu_\alpha \left (|v|^2-c_\nu \right ), |v|^2-c_\nu \right \rangle_{F_1} = x_0 \left \langle \nu_\alpha \left (|v|^2-c_\nu \right ),T_\gamma 1 \right \rangle_{F_1} + \left \langle \nu_\alpha \left (|v|^2-c_\nu \right ), T_\gamma \, (x \cdot v) \right \rangle_{F_1} \\ \medskip
				& \qquad \qquad \qquad \qquad \qquad \qquad \qquad \ \ + x_{d+1} \left \langle \nu_\alpha \left (|v|^2-c_\nu \right ), T_\gamma \left (|v|^2-c_\nu\right ) \right \rangle_{F_1},
			\end{aligned} \right.
		\end{equation}
		where we have set for fixed $(\lambda, \alpha)$
		\begin{equation} \label{defTgamma}
			T_\gamma := \Psi^{-1}_{(\lambda,\gamma, \,\alpha)}\Phi_{(\lambda, \gamma, \,\alpha)}.
		\end{equation}

		The system \eqref{eqSystXi} is the componentwise version of the projected problem \eqref{eqPfPpsi}.
		We are now going to decompose this system of $d+2$ equations in $X=(x_0, \ldots, x_{d+1})$ in a closed system of $3$ equations in $x_0$, $x \cdot \omega$ and $x_{d+1}$ for a fixed $\omega \in \mathbb{S}^{d-1}$ (corresponding to the \emph{longitudinal} sound waves of the Boltzmann equation, see also the work of Nicolaenko \cite{Nicolaenko:71}) together with a scalar relation in $x_i$ for all $1 \leq i \leq d$ (corresponding to the \emph{transverse} sound waves). 
		For this, we need the following technical lemma:
		
		\begin{lemma}
			Let $x, y \in \mathbb{R}^d$, $\bm e := (1, 0, \ldots, 0)^\mathsf{T}$ and $\gamma = \rho \, \omega$ for $\rho \in \RR$ and $\omega \in \mathbb{S}^{d-1}$. Then, we have
			\begin{gather}
				 \left \langle \nu_\alpha, T_\gamma\, (x \cdot v) \right \rangle_{F_1}  = x \cdot \omega \left \langle \nu_\alpha, T_{\rho \bm{e}} \, v_1 \right \rangle_{F_1}, \label{eqNuTgammav} \\
				 \left \langle \nu_\alpha \,( x \cdot v), T_\gamma \, 1 \right \rangle_{F_1}  = x \cdot \omega \left \langle \nu_\alpha \, v_1, T_{\rho \bm{e}} \, 1 \right \rangle_{F_1}, \label{eqNuvTgamma1} \\
				\begin{split} \left \langle \nu_\alpha \, (x \cdot v), T_\gamma \, (y \cdot v) \right \rangle_{F_1} & = (x \cdot \omega) (y \cdot \omega) \left \langle \nu_\alpha \, v_1, T_{\rho \bm{e}} \, v_1 \right \rangle_{F_1} \\ & + \left[x\cdot \omega - (x \cdot \omega) (y \cdot \omega)\right ] \left \langle \nu_\alpha \, v_2, T_{\rho \bm{e}} \, v_2 \right \rangle_{F_1},\end{split} \label{eqNuvTgammav}  \\
				\left \langle \nu_\alpha \,( x \cdot v), T_\gamma \left (|v|^2-c_\nu \right ) \right \rangle_{F_1}  = x \cdot \omega \left \langle \nu_\alpha \, v_1,T_{\rho \bm{e}}\left (|v|^2-c_\nu \right ) \right \rangle_{F_1}, \label{eqNuvTgammav2} \\
				\left \langle \nu_\alpha \left (|v|^2-c_\nu \right ), T_\gamma\, (x \cdot v) \right \rangle_{F_1}  = x \cdot \omega \left \langle \nu_\alpha \left (|v|^2-c_\nu \right ), T_{\rho \bm{e}} \, v_1 \right \rangle_{F_1}. \label{eqNuv2Tgammav}
			\end{gather}
		\end{lemma}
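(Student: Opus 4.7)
The proof hinges on a single structural fact: the operator $T_\gamma = \Psi^{-1}_{(\lambda,\gamma,\alpha)}\Phi_{(\lambda,\gamma,\alpha)}$ is \emph{rotationally equivariant}. Writing $U_R f := f\circ R$ for $R\in O(d)$, I claim
\[ T_\gamma\, U_R \,=\, U_R\, T_{R\gamma}. \]
Indeed, the auxiliary operator $M_\gamma := \nu_\alpha(v) + \lambda + i(\gamma\cdot v) - (1-\alpha)\Delta_v$ satisfies $M_\gamma U_R = U_R M_{R\gamma}$, since $\nu_\alpha$ is radial (because $F_\alpha$ is), $\Delta_v$ commutes with orthogonal changes of variable, and $\gamma\cdot v = (R\gamma)\cdot(Rv)$. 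Inverting and multiplying by $\nu_\alpha$ on the right gives the same intertwining for $\Phi_{(\lambda,\gamma,\alpha)}$. The rotational covariance of $\mathcal{Q}_\alpha^\pm$ together with the radiality of $F_\alpha$ makes $\mathcal L_\alpha^c$ covariant, while the range $\nu_\alpha^{1/2}N_1$ of $\Pi$ is stable under $U_R$ (rotations mix the $v_i F_1$ among themselves and fix $F_1$, $|v|^2 F_1$); hence $\mathcal S_\alpha$, $\Psi_{(\lambda,\gamma,\alpha)}$, its inverse, and finally $T_\gamma$ all inherit the equivariance.

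Given $\gamma = \rho\omega$, I pick $R\in O(d)$ with $R\omega = \bm{e}$, so that $R\gamma = \rho\bm{e}$. Writing $x\cdot v = (Rx)\cdot(Rv) = U_R\bigl((Rx)\cdot w\bigr)$, the equivariance yields $T_\gamma(x\cdot v) = U_R\,T_{\rho\bm{e}}\bigl((Rx)\cdot w\bigr)$. Changing variable $w = Rv$ in each of the five inner products, which is legitimate because $\nu_\alpha$, $F_1$ and $|v|^2 - c_\nu$ are all radial, reduces every integral to an $\bm{e}$-aligned one of the form $\sum_i (Rx)_i\langle\nu_\alpha\phi, T_{\rho\bm{e}}v_i\rangle_{F_1}$, and its bilinear analogue for \eqref{eqNuvTgammav}.

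It remains to identify the nonzero base integrals $\langle\nu_\alpha\phi_i, T_{\rho\bm{e}}\phi_j\rangle_{F_1}$ for $\phi_k\in\{1,v_1,\ldots,v_d,|v|^2-c_\nu\}$. Applying the equivariance once more to any $R\in O(d)$ fixing $\bm{e}$, the operator $T_{\rho\bm{e}}$ commutes with every reflection $v_j\mapsto -v_j$ ($j\geq 2$) and with every rotation about the first axis. A parity check then forces \eqref{eqNuTgammav}, \eqref{eqNuvTgamma1}, \eqref{eqNuvTgammav2}, \eqref{eqNuv2Tgammav} (only the $v_1$ mode survives on the right) and kills the cross-terms $\langle\nu_\alpha v_i, T_{\rho\bm{e}}v_j\rangle_{F_1}$ for $i\neq j$; the axial symmetry further makes $\langle\nu_\alpha v_i, T_{\rho\bm{e}}v_i\rangle_{F_1}$ independent of $i\in\{2,\ldots,d\}$, hence equal to the $i=2$ value. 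Combining these observations with the orthogonal-matrix identities $(Rx)_1 = x\cdot\omega$, $(Rx)\cdot(Ry) = x\cdot y$ and $\sum_{i\geq 2}(Rx)_i(Ry)_i = x\cdot y - (x\cdot\omega)(y\cdot\omega)$ assembles the five claims.

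The only non-algebraic step is the initial equivariance of $T_\gamma$: once it is in hand, every identity reduces to a parity/symmetry bookkeeping on the $d-1$ transverse directions. Identity \eqref{eqNuvTgammav} is the most delicate because it simultaneously couples the parallel-perpendicular splittings of both $x$ and $y$, whereas the other four involve only a single such splitting.
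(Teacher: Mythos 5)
Your proof is correct and follows essentially the same route as the paper's: establish the rotational equivariance of $T_\gamma$ through the intertwining of $\Phi_{(\lambda,\gamma,\alpha)}$ with orthogonal changes of variable, rotate $\gamma$ onto $\rho\,\bm e$, and then finish with parity and axial-symmetry bookkeeping on the transverse directions. You are in fact slightly more explicit than the paper in checking that $\mathcal S_\alpha$ and $\Psi_{(\lambda,\gamma,\alpha)}$ also commute with rotations, and your coefficient $x\cdot y - (x\cdot\omega)(y\cdot\omega)$ in \eqref{eqNuvTgammav} is the correct one (the displayed statement's $x\cdot\omega - (x\cdot\omega)(y\cdot\omega)$ is a typo, as the paper's own final computation with $MM^{*}=I_d$ confirms).
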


		\begin{proof}
			According to the definitions of $T_\gamma$ and $\Psi_{(\lambda, \gamma, \,\alpha)}$, the $\gamma$--dependency of $T_\gamma$ is only happening through the operator $\Phi_{(\lambda, \gamma, \,\alpha)}$. 
			But, for $M \in \mathcal{O}(d)$ the orthogonal group of $\mathbb R^d$ (namely,  $M M^* = \bf I_d$) one has for $v \in \mathbb{R}^d$ and $g \in \domain\left (\Phi_{(\lambda, \gamma, \,\alpha)} \right )$, using the fact that $\nu_\alpha$ is a radial function, for all $v \in \RR^d$,
			\begin{align*}
				\left (\Phi_{(\lambda, \gamma, \,\alpha)} \, g\right )(M v) & = \left ( \nu_\alpha(M v) + \lambda + i \, (\gamma \cdot M v) - (1-\alpha) \Delta_v\right )^{-1} \nu_\alpha(M v) \, g(M v), \\
				 & = \left ( \Phi_{(\lambda, M^{-1}\,  \gamma, \,\alpha)} M g\right )(v),
			\end{align*}
			where we have set $Mg\,(v) := g(Mv)$. Then
			\begin{equation*}
				\left (T_\gamma \,g \right ) (M v) = \left (T_{M^{-1} \gamma} \,M g \right ) (v), \quad \forall \, v \in \RR^d.
			\end{equation*}
			Especially, if $g$ is a radial function, there exists a function $\Gamma_g$ such that
			\begin{equation*}
				\left (T_\gamma \, g \right ) (v) = \Gamma_g(\gamma\cdot v, |v|).
			\end{equation*}			
			One has thanks to this result
			\begin{align*}
				\left \langle \nu_\alpha, T_\gamma\, (x \cdot v) \right \rangle_{F_1} & = \left \langle T_\gamma^*\,\nu_\alpha,  (x \cdot v) \right \rangle_{F_1} \\
				 & = \int_{\mathbb{R}^d} \Gamma_{\nu_\alpha}(\gamma\cdot v, |v|) \, (x \cdot v) \, {F_1}(v) \, dv.
			\end{align*}
			Let $M \in \mathcal{O}(d)$ such that $M^{-1} \, \omega = \bm{e}$. Thanks to the change of variables $v = M\xi$ and using the polar coordinates $\gamma = \rho \, \omega$ one has $\gamma \cdot v = \rho \, M^{-1} \, \omega \cdot \xi = \rho \, \xi_1$ and then
			\begin{align*}
				\left \langle \nu_\alpha, T_\gamma\, (x \cdot v) \right \rangle_{F_1} & = \int_{\mathbb{R}^d} \Gamma_{\nu_\alpha}( \rho \,\xi_1, |\xi|) \, (M^{-1} x \cdot \xi) \, {F_1}(\xi) \, d\xi	\\
				 & =  \int_{\mathbb{R}^d} \Gamma_{\nu_\alpha}^{odd}( \rho \, \xi_1, |\xi|) \, (M^{-1} x \cdot \xi)\, {F_1}(\xi) \, d\xi,
			\end{align*}
			where $g^{odd}(a, \cdot) = \left (g(a, \cdot) - g(-a, \cdot)\right )/2$. Given that ${F_1}$ is a radial function of $v$,  $\langle g^{odd}, h \rangle_{F_1} = \langle g, h^{odd} \rangle_{F_1}$ and $(M^{-1} x)_1 = (M^{-1} x) \cdot M^{-1} \omega$, one has
			\begin{align*}
				\left \langle \nu_\alpha, T_\gamma\, (x \cdot v) \right \rangle_{F_1} & =  \int_{\mathbb{R}^d} \Gamma_{\nu_\alpha}^{odd}( \rho \, \xi_1, |\xi|) \, (M^{-1} x)_1 \, \xi_1\, {F_1}(\xi) \, d\xi \\
				 & = (M^{-1} x)_1 \int_{\mathbb{R}^d} \Gamma_{\nu_\alpha}( \rho \, \xi_1, |\xi|) \, \xi_1\, {F_1}(\xi) \, d\xi \\
				 & = x \cdot \omega \left \langle T_{\rho \bm{e}}^*\, \nu_\alpha, v_1 \right \rangle_{F_1},
			\end{align*}
			which proves \eqref{eqNuTgammav}.
			
			Thanks to the same arguments
			\begin{align*}
				\left \langle \nu_\alpha \,( x \cdot v), T_\gamma \, 1 \right \rangle_{F_1} & = \int_{\mathbb{R}^d} \nu_\alpha(v) \, ( x \cdot v) \, \Gamma_1(\gamma\cdot v, |v|) \, {F_1}(v) \, dv \\
				 & = \int_{\mathbb{R}^d} \nu_\alpha(\xi) \, (M^{-1} x \cdot \xi) \, \Gamma_1( \rho \, \xi_1, |\xi|) \, {F_1}(\xi) \, d\xi \\
				 & = \int_{\mathbb{R}^d} \nu_\alpha(\xi) \, (M^{-1} x \cdot \xi) \, \Gamma_1^{odd}( \rho \, \xi_1, |\xi|) \, {F_1}(\xi) \, d\xi \\
				 & = x \cdot \omega  \left \langle \nu_\alpha \, v_1, T_{\rho \bm{e}} 1  \right \rangle_{F_1},
			\end{align*}
			which proves \eqref{eqNuvTgamma1}. Concerning the next identity, one has
			\begin{align*}
				\left \langle \nu_\alpha \, (x \cdot v), T_\gamma \, (y \cdot v) \right \rangle_{F_1} & = \sum_{1 \leq i,j \leq d} x_i \, y_j \int_{\mathbb{R}^d} \nu_\alpha(v) \,v_i \, (T_\gamma \,v_j) (v) \, {F_1}(v) \, dv \\
				 & = \sum_{1 \leq i,j \leq d} x_i \, y_j \int_{\mathbb{R}^d} \nu_\alpha(\xi) \, (M\xi)_i \left (T_{\rho \bm{e}} (M \xi)_j\right ) (\xi) \, {F_1}(\xi) \, d\xi \\
				 & = \sum_{1 \leq i,j,k,l \leq d} x_i \, y_j \, M_{i l} \, M_{j k} \int_{\mathbb{R}^d} \nu_\alpha(\xi) \, \xi_l \left (T_{\rho \bm{e}}\,\xi_k\right ) (\xi) \, {F_1}(\xi) \, d\xi.
			\end{align*}
			If $k \neq l$, this integral is zero (it is clear by doing the transformation $\xi \to -\xi$). In the other case, one has
			\begin{align*}
				\left \langle \nu_\alpha \, (x \cdot v), T_\gamma \, (y \cdot v) \right \rangle_{F_1}  & = \sum_{1 \leq i,j \leq d} x_i \, y_j \, M_{i 1} \, M_{j 1} \int_{\mathbb{R}^d} \nu_\alpha(\xi) \, \xi_1 \left (T_{\rho \bm{e}} \, \xi_1 \right ) (\xi) \, {F_1}(\xi) \, d\xi \\ & + \sum_{\substack{1 \leq i,j \leq d \\ 2 \leq l \leq d}} x_i \, y_j \, M_{i l} \, M_{j l} \int_{\mathbb{R}^d} \nu_\alpha(\xi) \, \xi_2 \left (T_{\rho \bm{e}} \, \xi_2 \right ) (\xi) \, {F_1}(\xi) \, d\xi \\
				  & = (x \cdot \omega) (y \cdot \omega) \left \langle \nu_\alpha \, v_1, T_{\rho \bm{e}} \, v_1 \right \rangle_{F_1} \\ & + \sum_{1 \leq i,j \leq d} x_i \, y_j \left [ (M M^*)_{i j} - M_{i 1}\, M_{j_1}\right ]\left \langle \nu_\alpha \, v_2, T_{\rho \bm{e}} \, v_2 \right \rangle_{F_1},
			\end{align*}
			which is \eqref{eqNuvTgammav} because $M M^* = I_d$. The inequalities \eqref{eqNuvTgammav2} and \eqref{eqNuv2Tgammav} are finally obtained using the same methods of proof.
		\end{proof}
	
		Applying this lemma to system \eqref{eqSystXi}, we find for all $1 \leq i \leq d$
		\begin{gather} 
			x_0 \left \langle \nu_\alpha, T_{\rho \bm{e}} 1  - 1 \right \rangle_{F_1} + x \cdot \omega \left \langle \nu_\alpha, T_{\rho \bm{e}} v_1 \right \rangle_{F_1} + x_{d+1}\left \langle \nu_\alpha, T_{\rho \bm{e}}\left (|v|^2-c_\nu \right ) \right \rangle_{F_1} = 0, \label{eqX0} \\
			\begin{split} x_i \langle \nu_\alpha \, v_i, v_i \rangle_{F_1} & = \omega_i \, x_0 \left \langle \nu_\alpha \, v_1, T_{\rho \bm{e}} 1 \right \rangle_{F_1} + \left[x_i - \omega_i \,(x \cdot \omega) \right] \left \langle \nu_\alpha \, v_2, T_{\rho \bm{e}}v_2 \right \rangle_{F_1}  \\ & + \omega_i \, (x \cdot \omega) \left \langle \nu_\alpha \, v_1, T_{\rho \bm{e}} v_1 \right \rangle_{F_1} + \omega_i \, x_{d+1} \left \langle \nu_\alpha \, v_1, T_{\rho \bm{e}}\left (|v|^2-c_\nu \right ) \right \rangle_{F_1}, 
			\end{split} \label{eqXi} \\
			\begin{split} x_0 \left \langle \nu_\alpha \left (|v|^2-c_\nu \right ), T_{\rho \bm{e}} \, 1 \right \rangle_{F_1} & + x \cdot \omega \left \langle \nu_\alpha \left (|v|^2-c_\nu \right ), T_{\rho \bm{e}} v_1 \right \rangle_{F_1}  \\ & + x_{d+1} \left \langle \nu_\alpha \left (|v|^2-c_\nu \right ), T_{\rho \bm{e}}\left (|v|^2-c_\nu \right ) - \left (|v|^2-c_\nu \right ) \right \rangle_{F_1} = 0. 
			\end{split} \label{eqXd1}
		\end{gather}
	
		Now, on the one hand, if we multiply \eqref{eqXi} by $\omega_i$ and sum over all $i$, using the fact that $|\omega| = 1$, we find that
		\begin{equation} \label{eqXomega}
			x_0 \left \langle \nu_\alpha \, v_1 , T_{\rho \bm{e}} 1 \right \rangle_{F_1} + x \cdot \omega \, \left \langle \nu_\alpha \, v_1, T_{\rho \bm{e}} v_1 - v_1 \right \rangle_{F_1} + x_{d+1}\left \langle \nu_\alpha \, v_1, T_{\rho \bm{e}}\left (|v|^2-c_\nu \right ) \right \rangle_{F_1} = 0.
		\end{equation}
		The system \eqref{eqX0}--\eqref{eqXomega}--\eqref{eqXd1}  is closed in $(x_0, x \cdot \omega, x_{d+1})$ for a fixed $\omega \in\mathbb{S}^{d-1}$. 
		Coming back to a more abstract form, there exists solutions to this system if and only if
		\begin{equation} \label{eqDispRel}
			D(\lambda, \rho, \alpha) = 0,
		\end{equation}
		where we have defined $D$ as the following Gram-like matrix (remember that $T_\gamma $ is given by \eqref{defTgamma} and depends on $(\lambda, \gamma, \alpha)$)
		\begin{multline} \label{defDetlambda}
			D(\lambda, \rho, \alpha) := \\ \begin{vmatrix}
				\left \langle \nu_\alpha, \left (T_{\rho \bm{e}} - \id\right ) 1\right \rangle_{F_1} & \left \langle \nu_\alpha, T_{\rho \bm{e}} v_1 \right \rangle_{F_1} & \left \langle \nu_\alpha, T_{\rho \bm{e}}\left (|v|^2-c_\nu \right ) \right \rangle_{F_1} \\ \medskip
				\left \langle \nu_\alpha \, v_1 , T_{\rho \bm{e}} 1 \right \rangle_{F_1} & \left \langle \nu_\alpha\,  v_1, \left (T_{\rho \bm{e}} - \id\right ) v_1\right \rangle_{F_1} &  \left \langle \nu_\alpha \, v_1, T_{\rho \bm{e}}\left (|v|^2-c_\nu \right ) \right \rangle_{F_1} \\ \medskip
				\left \langle \nu_\alpha\left (|v|^2-c_\nu \right ), T_{\rho \bm{e}} 1 \right \rangle_{F_1} & \left \langle \nu_\alpha\left (|v|^2-c_\nu \right ), T_{\rho \bm{e}} v_1 \right \rangle_{F_1} & \left \langle \nu_\alpha\left (|v|^2-c_\nu \right ), \left (T_{\rho \bm{e}} - \id\right ) \left (|v|^2-c_\nu \right ) \right \rangle_{F_1}
			\end{vmatrix}.
		\end{multline}

		On the other hand, if one multiplies \eqref{eqXomega} by $\omega_i$ and subtract this expression to \eqref{eqXi}, one finds
		\begin{equation} \label{eqXiFinal}
			\left[x_i - \omega_i \, (x \cdot \omega) \right] D_\omega(\lambda, \rho, \alpha) = 0,
		\end{equation}
		where we have set
		\begin{equation} \label{defD0}
			D_\omega(\lambda, \rho, \alpha) := \left \langle \nu_\alpha \,v_1 , \left (T_{\rho \bm{e}} - \id \right ) v_1 \right \rangle_{F_1}.
		\end{equation}
	  Then, if one solves \eqref{eqXomega} in $C \cdot \omega$, the relation \eqref{eqXiFinal} will give the expression of $x_i$, provided that the equation $D_\omega(\lambda, \rho, \alpha) = 0$ admits an unique solution $\lambda$. 	
	
		We will simplify these expressions thanks to the following Lemma.
		\begin{lemma} 
		  \label{lemSimplificationCollInv}
			Let  $(\rho, \lambda,\alpha) \in U_1 \times U_2 \times(\alpha_3,1]$. If $g,h$ are elastic collisional invariants, namely if 
			\[g, h \in N_1 =\vect \{ F_1, \, v_i \, F_1 , |v|^2 \, F_1 : \, 1 \leq i \leq d \},\]
			then we can write for all $\omega \in \SSS^{d-1}$ and $\gamma = \rho \, \omega$
			\begin{gather*}
				\left (\Psi^{-1}_{(\lambda,\gamma, \,\alpha)}\Phi_{(\lambda, \gamma, \,\alpha)} - \id\right ) h =  \Psi^{-1}_{(\lambda, \gamma, \, \alpha)} \left ( \Phi_{(\lambda, \gamma, \,\alpha)} - \id \right )h, \\ 
				\left \langle \nu_\alpha g,\Psi^{-1}_{(\lambda,\gamma, \, \alpha)}\Phi_{(\lambda, \gamma, \,\alpha)} h\right \rangle = \left \langle \nu_\alpha g,  \Psi^{-1}_{(\lambda, \gamma, \, \alpha)} \left ( \Phi_{(\lambda, \gamma, \,\alpha)} - \id \right ) h\right \rangle .
			\end{gather*}
		\end{lemma}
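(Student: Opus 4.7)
The plan is to treat both parts of the lemma as consequences of a single vanishing property of $\mathcal S_\alpha$ on $\mathcal N_\alpha$. I would first establish the operator identity (the first claim) and then deduce the paired identity (the second claim) from it.

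For the operator identity, a direct algebraic manipulation starting from the definition $\Psi_{(\lambda,\gamma,\alpha)} = \id - \Phi_{(\lambda,\gamma,\alpha)}\nu_\alpha^{-1/2}\mathcal S_\alpha\nu_\alpha^{1/2}$ gives
\[
(\Psi^{-1}\Phi - \id) h \;=\; \Psi^{-1}(\Phi - \Psi) h \;=\; \Psi^{-1}(\Phi - \id) h \;+\; \Psi^{-1}\Phi\,\nu_\alpha^{-1/2}\mathcal S_\alpha\nu_\alpha^{1/2} h .
\]
The first identity therefore reduces to the key vanishing claim $\mathcal S_\alpha(\nu_\alpha^{1/2} h) = 0$ whenever $h \in N_1$. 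This vanishing is built into the decomposition $\mathcal L_\alpha^c = \nu_\alpha^{1/2}(\Pi + \mathcal S_\alpha)\nu_\alpha^{1/2}$ combined with the defining property of $\Pi$ as the projection onto $\mathcal N_\alpha = \nu_\alpha^{1/2} N_1$: for $h \in N_1$ one has $\Pi(\nu_\alpha^{1/2} h) = \nu_\alpha^{1/2} h$, so the vanishing is equivalent to $\mathcal L_\alpha^c h = \nu_\alpha h$ on $N_1$, which in turn follows from the conservation laws of $\mathcal Q_\alpha$ together with the specific choice of the energy-correction constant $c_\nu$.

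For the second identity, I would test the first one against $\nu_\alpha g$ in the $\langle \cdot, \cdot\rangle = L^2(F_1^{-1})$ pairing. This yields $\langle \nu_\alpha g, (\Psi^{-1}\Phi - \id) h\rangle = \langle \nu_\alpha g, \Psi^{-1}(\Phi - \id) h\rangle$, from which the stated equality follows by absorbing the scalar $\langle \nu_\alpha g, h\rangle$, which vanishes for distinct elements of the modified basis $\{F_1,\, v_i F_1,\, (|v|^2-c_\nu)F_1\}$: the mass/momentum and momentum/energy cross-terms vanish by parity of $F_1$, and the mass/energy cross-term vanishes exactly by the defining relation $\langle \nu_\alpha, |v|^2\rangle_{F_1} = \langle \nu_\alpha, c_\nu\rangle_{F_1}$.

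The main obstacle is checking that $\mathcal S_\alpha(\nu_\alpha^{1/2} h) = 0$ extends from the elastic case $\alpha = 1$, where it is immediate (since $\mathcal L_1 h = 0$ and $\mathcal L_1^c = \mathcal L_1 + \nu_1\,\id$ force $\mathcal L_1^c h = \nu_1 h$), to the inelastic range $\alpha \in (\alpha_3, 1]$. In the inelastic regime the elastic invariants are no longer exact null vectors of $\mathcal L_\alpha$, and one must argue using the inelastic conservation structure — mass and momentum being preserved exactly by $\mathcal Q_\alpha$, and the energy invariant being reabsorbed through the $c_\nu$ correction together with the $\alpha$-deformation of the projection $\Pi$ — to obtain the required cancellation uniformly in $\alpha$ close to $1$.
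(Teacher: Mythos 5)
Your reduction of both identities to the single vanishing claim $\mathcal S_\alpha\,\nu_\alpha^{1/2}h=0$ for $h\in N_1$ is exactly the paper's route (the paper phrases it as $\Psi_{(\lambda,\gamma,\alpha)}h=h$, hence $\Psi^{-1}_{(\lambda,\gamma,\alpha)}h=h$, which is equivalent to your identity $(\Psi^{-1}\Phi-\id)h=\Psi^{-1}(\Phi-\Psi)h$), and your treatment of the second identity --- pairing against $\nu_\alpha g$ and killing the cross term $\langle\nu_\alpha g,h\rangle$ by parity and by the defining relation for $c_\nu$ --- is the paper's ``orthogonality of the collisional invariants'' step, made more explicit. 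The problem is the vanishing claim itself, which you correctly flag as ``the main obstacle'' but do not close. The route you sketch --- show $\mathcal L_\alpha^c h=\nu_\alpha h$ on $N_1$ and deduce it from the conservation laws of $\mathcal Q_\alpha$ together with the choice of $c_\nu$ --- cannot work for $\alpha<1$. Indeed $\mathcal L_\alpha^c h=\nu_\alpha h$ is equivalent to $\mathcal L_\alpha h=(1-\alpha)\Delta_v h$, i.e.\ to $\mathcal Q_\alpha(h,F_\alpha)+\mathcal Q_\alpha(F_\alpha,h)=0$ for every $h\in N_1$; this holds at $\alpha=1$, where $N_1=\ker\mathcal L_1$, but fails for $\alpha<1$, since the elastic invariants $F_1$, $v_iF_1$, $|v|^2F_1$ are built on the Maxwellian and are not annihilated by the inelastic linearized operator (the paper itself records that $\ker\mathcal L_\alpha$ is spanned by $F_\alpha$ and $v_iF_\alpha$ only). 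Conservation laws are of no help here: they are moment identities $\int\mathcal Q_\alpha(f,g)\,\varphi\,dv=0$ for $\varphi\in\{1,v\}$, i.e.\ statements about the adjoint acting on low-order polynomials, and yield no pointwise identity for $\mathcal L_\alpha^c$ applied to elements of $N_1$; the constant $c_\nu$ plays no role in that computation.

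The paper avoids this issue entirely: the vanishing $\mathcal S_\alpha\,\nu_\alpha^{1/2}h=0$ on $\mathcal N_\alpha=\nu_\alpha^{1/2}N_1$ is read off from the construction of the splitting $\mathcal L_\alpha^c=\nu_\alpha^{1/2}(\Pi+\mathcal S_\alpha)\nu_\alpha^{1/2}$, in which $\Pi$ carries the whole action on $\mathcal N_\alpha$ and $\mathcal S_\alpha$ is the complementary piece, so that $\mathcal S_\alpha$ annihilates the range of $\Pi$ by construction; no property of $\mathcal Q_\alpha$ beyond this bookkeeping is invoked. Your analysis does expose that, under the most literal reading of the definitions of $\Pi$ and $\mathcal S_\alpha$, the vanishing would indeed require the false identity $\mathcal L_\alpha^c h=\nu_\alpha h$ --- so the splitting must be understood so that $\mathcal S_\alpha$ kills $\mathcal N_\alpha$ by fiat, with the $O(1-\alpha)$ discrepancy absorbed elsewhere. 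But as a proof of the lemma for the full range $\alpha\in(\alpha_3,1]$, your argument is incomplete: it establishes the conclusion only at $\alpha=1$.
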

			
		\begin{proof}
			By definition of $\mathcal N_\alpha$, we have $\nu_\alpha^{1/2} h \in \mathcal{N}_\alpha$, and then $\mathcal S_\alpha \, \nu_\alpha^{1/2} h = 0$. 
			But, we know that 
			\[ \Psi_{(\lambda, \gamma, \alpha)} = \id - \Phi_{(\lambda, \gamma, \alpha)}\nu_\alpha^{-1/2} \mathcal S_\alpha \,\nu_\alpha^{1/2}.\]
			Thus, we have $\Psi_{(\lambda, \gamma, \alpha)} h = h$, and given that $\Psi_{(\lambda, \gamma, \, \alpha)}$ is invertible for $(\rho, \lambda,\alpha) \in U_1 \times U_2 \times(\alpha_3,1]$ and $\gamma = \rho \, \omega$, we have
			\begin{equation}
			  \label{eqPsiSimplification}
			  \Psi^{-1}_{(\lambda, \gamma, \alpha)} h = h,
			\end{equation}
			which proves the first relation.
			Using the orthogonality of the collisional invariants and \eqref{eqPsiSimplification}, we obtain the second equality:
			\begin{align*}
			  \left \langle \nu_\alpha g,\Psi^{-1}_{(\lambda,\gamma, \, \alpha)}\Phi_{(\lambda, \gamma, \alpha)} h\right \rangle & = \left \langle \nu_\alpha g,\Psi^{-1}_{(\lambda,\gamma, \, \alpha)}\Phi_{(\lambda, \gamma, \alpha)} h\right \rangle - \left \langle \nu_\alpha g, h\right \rangle \\
			   & = \left \langle \nu_\alpha g, \left (\Psi^{-1}_{(\lambda,\gamma, \, \alpha)}\Phi_{(\lambda, \gamma, \alpha)} - \id \right ) h\right \rangle \\
			   & = \left \langle \nu_\alpha g,  \Psi^{-1}_{(\lambda, \gamma, \, \alpha)} \left ( \Phi_{(\lambda, \gamma, \alpha)} - \id \right ) h\right \rangle .
			\end{align*}
		\end{proof}
		
	  Let us set $ \Upsilon_{(\lambda, \gamma, \, \alpha)} := \Psi^{-1}_{(\lambda, \gamma, \, \alpha)} \left ( \Phi_{(\lambda, \gamma, \, \alpha)} - \id \right )$. Thanks to this lemma, to the definition of $c_\nu$ and by the nullity of the odd moments of the centered Gaussian $F_1$, we can write \eqref{defDetlambda} in a ``simpler'' form, namely 
		\begin{equation}
		  \label{defDetlambdaSimp}
			D(\lambda, \rho, \alpha) = \begin{vmatrix}
				\left \langle \nu_\alpha, \Upsilon_{(\lambda, \, \rho \bm{e}, \, \alpha)} \, 1\right \rangle_{F_1} & \left \langle \nu_\alpha, \Upsilon_{(\lambda, \, \rho \bm{e}, \, \alpha)} \, v_1 \right \rangle_{F_1} & \left \langle \nu_\alpha, \Upsilon_{(\lambda, \, \rho \bm{e}, \, \alpha)} \,g \right \rangle_{F_1} \\
				\left \langle \nu_\alpha \, v_1 , \Upsilon_{(\lambda, \, \rho \bm{e}, \, \alpha)} \, 1 \right \rangle_{F_1} & \left \langle \nu_\alpha\,  v_1, \Upsilon_{(\lambda, \, \rho \bm{e}, \, \alpha)} \, v_1\right \rangle_{F_1} &  \left \langle \nu_\alpha \, v_1, \Upsilon_{(\lambda, \, \rho \bm{e}, \, \alpha)} \,g \right \rangle_{F_1} \\
				\left \langle \nu_\alpha \,g,\Upsilon_{(\lambda, \, \rho \bm{e}, \, \alpha)} \, 1 \right \rangle_{F_1} & \left \langle \nu_\alpha \,g, \Upsilon_{(\lambda, \, \rho \bm{e}, \, \alpha)} \, v_1 \right \rangle_{F_1} & \left \langle \nu_\alpha \,g, \Upsilon_{(\lambda, \, \rho \bm{e}, \, \alpha)} \,g \right \rangle_{F_1}
			\end{vmatrix},
		\end{equation}
		where we have set $g(v) := |v|^2-c_\nu$. We can also write \eqref{defD0} the same way
		\begin{equation} 
		  \label{defD0Simp}
			D_\omega(\lambda, \rho, \alpha) = \left \langle \nu_\alpha \,v_1 , \Upsilon_{\rho \bm{e}} \, v_1 \right \rangle_{F_1}.
		\end{equation}		
    Before solving these equations, we need a last lemma.
		
		\begin{lemma} 
		  \label{lemPsi0}
			Let $h$ be an elastic collisional invariant (namely $h \in N_1$). If $\Psi^*$ denotes the adjoint operator of $\Psi$, then we have
			\begin{align*}
				\left (\Psi^*_{(0, 0, 1)}\right )^{-1} \, \nu_1 h  = \nu_1 h.
			\end{align*}
		\end{lemma}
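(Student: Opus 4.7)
The plan is to compute $\Psi_{(0,0,1)}$ explicitly, show that $\mathcal{S}_1(\nu_1^{1/2}h) = 0$ for $h \in N_1$, and then transfer this to the adjoint via the self-adjoint structure of the relevant operators on $L^2(F_1^{-1})$. Setting $(\lambda,\gamma,\alpha) = (0,0,1)$ in the definition $\Phi_{(\lambda,\gamma,\alpha)} = (\nu_\alpha + \lambda + i(\gamma\cdot v) - (1-\alpha)\Delta_v)^{-1}\nu_\alpha$ collapses all perturbative terms and gives $\Phi_{(0,0,1)} = \id$, hence $\Psi_{(0,0,1)} = \id - \nu_1^{-1/2}\mathcal{S}_1\nu_1^{1/2}$. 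Since Lemma \ref{lemInvPsi} guarantees that $\Psi_{(0,0,1)}$, and therefore also $\Psi^*_{(0,0,1)}$, is invertible, it suffices to verify the fixed-point relation $\Psi^*_{(0,0,1)}(\nu_1 h) = \nu_1 h$.

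The key computation is showing $\mathcal{S}_1(\nu_1^{1/2}h) = 0$ for every $h \in N_1$. Since the Laplacian term drops out at $\alpha = 1$, we have $\mathcal{L}_1 = -\nu_1\,\id + \mathcal{L}_1^c$, so the decomposition \eqref{defK} rewrites as $\mathcal{S}_1 = \nu_1^{-1/2}\mathcal{L}_1^c\nu_1^{-1/2} - \Pi$. For $h \in N_1 = \ker\mathcal{L}_1$ one has $\mathcal{L}_1^c h = \mathcal{L}_1 h + \nu_1 h = \nu_1 h$, and since $\nu_1^{1/2}h \in \mathcal{N}_1 = \range(\Pi)$ the projection acts as the identity on it, $\Pi(\nu_1^{1/2}h) = \nu_1^{1/2}h$. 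The two contributions exactly cancel, yielding $\mathcal{S}_1(\nu_1^{1/2}h) = \nu_1^{-1/2}(\nu_1 h) - \nu_1^{1/2}h = 0$.

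To pass from this to the adjoint statement, I would invoke the self-adjointness of $\mathcal{S}_1$ with respect to the inner product $\langle\cdot,\cdot\rangle$ on $L^2(F_1^{-1})$. This rests on three classical facts: $\mathcal{L}_1$ is self-adjoint on $L^2(F_1^{-1})$, the multiplications by the real weights $\nu_1^{\pm 1/2}$ are self-adjoint, and $\Pi$ (being the conjugate of the spectral projection $\mathbb{P}$ onto $\ker\mathcal{L}_1$) is the orthogonal projection onto $\mathcal{N}_1$. Consequently $\mathcal{S}_1^* = \mathcal{S}_1$, and the previous paragraph already delivers $\mathcal{S}_1^*(\nu_1^{1/2}h) = 0$. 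Testing $\Psi^*_{(0,0,1)}(\nu_1 h)$ against an arbitrary $g$ through the natural duality then gives
\[
\langle \Psi^*_{(0,0,1)}(\nu_1 h), g\rangle = \langle\nu_1 h, g\rangle - \langle \nu_1^{1/2}h, \mathcal{S}_1\nu_1^{1/2}g\rangle = \langle\nu_1 h, g\rangle - \langle\mathcal{S}_1^*\nu_1^{1/2}h, \nu_1^{1/2}g\rangle = \langle\nu_1 h, g\rangle,
\]
so $\Psi^*_{(0,0,1)}(\nu_1 h) = \nu_1 h$ and the lemma follows by invertibility of $\Psi^*_{(0,0,1)}$.

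The main obstacle I anticipate is the bookkeeping around the self-adjointness of $\Pi$: although $\Pi$ enters only implicitly through the splitting of $\mathcal{L}_1^c$, showing that it genuinely coincides with the orthogonal projection onto $\mathcal{N}_1$ in the $L^2(F_1^{-1})$ pairing is what makes $\mathcal{S}_1^* = \mathcal{S}_1$ and hence the whole argument go through. A secondary concern is consistency of function spaces: $\Psi_{(0,0,1)}$ is inverted on $L^1(m^{-1})$, while the duality calculation above naturally lives in $L^2(F_1^{-1})$, so one has to verify that $\nu_1 h$ and the test functions $g$ sit in the intersection where all the manipulations are legitimate. This is unproblematic because the elastic collisional invariants have Gaussian tails, but it is the point that deserves the most care in a fully rigorous write-up.
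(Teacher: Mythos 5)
Your proposal is correct and follows essentially the same route as the paper: both reduce the claim to the identity $\Psi^*_{(0,0,1)}(\nu_1 h)=\nu_1 h$ by observing that $\Phi_{(0,0,1)}=\id$ and that the $\mathcal{S}_1$-term annihilates $\nu_1^{1/2}h$ for $h\in N_1$, then invert. In fact you supply a detail the paper leaves implicit — the verification that $\mathcal{S}_1^*(\nu_1^{1/2}h)=0$, which you obtain from the explicit cancellation $\mathcal{S}_1(\nu_1^{1/2}h)=\nu_1^{-1/2}\mathcal{L}_1^c h-\Pi(\nu_1^{1/2}h)=0$ together with the self-adjointness of $\mathcal{L}_1$ and the orthogonality of $\Pi$ in $L^2(F_1^{-1})$ — so your write-up is, if anything, slightly more complete on that point.
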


		\begin{proof} 
			Let $ (\rho, \lambda,\alpha) \in U_1 \times U_2 \times(\alpha_2,1]$ and $\omega \in \mathbb{S}^{d-1}$ and set $\gamma = \rho \, \omega$.
      If $T$ is an invertible operator on a Banach space, it is known that $(T^*)^{-1} = (T^{-1})^*$. Moreover, provided that $\nu_1 \in \RR$, the adjoint  operator $\Phi^*_{(\lambda, \gamma, \, 1)}$ is the operator of multiplication by 
			\begin{equation*}
				\frac{\nu_1(v)}{ \nu_1(v) + \bar \lambda - i \, (\gamma \cdot v)}
			\end{equation*}			
			Then, if $(\lambda, \rho) \to (0,0)$ we have $\Phi_{(\lambda, \rho \,\omega, \, 1)}^* \to \id$ strongly.
      But, we can also compute 
			\begin{align*}
			  \Psi_{(\lambda, \, \rho\, \omega,\, \alpha)}^* & = \id \, - \, \left (\Phi_{(\lambda, \rho \, \omega)} \nu_1^{-1/2} \mathcal S_1 \, \nu_1^{1/2}\right )^* \\
			  & =  \id \, - \, \nu_1^{1/2} \mathcal S_1^* \,\nu_1^{-1/2} \, \Phi_{(\lambda, \rho \, \omega)}^*,
			\end{align*}			
			and as $h \in N_1$,  we have 
			\[\nu_1^{1/2} \mathcal S_1^* \, \nu_1^{- 1/2} \, \nu h  =  0. \]
			Finally, we can write
			\begin{align*}
				\Psi_{(0, 0, 1)}^* \nu_1 h & = \left ( \id \, - \, \nu_1^{1/2} \mathcal S_1^* \, \nu_1^{-1/2} \right ) \nu_1 h \\
				 & = \nu_1 h,
			\end{align*}
			which concludes the proof after inversion.
		\end{proof}

		\begin{remark}
		  \label{remAnalyticEig}
		  The eigenvalues and eigenvectors of $\LL_{\alpha, \,\rho\omega}$ are analytic function or $\rho$. Indeed, thanks to the hard spheres kernel and estimates \eqref{nuCrois},  there exists a nonnegative constant $M$ such that
		  \[ \left \|(\omega \cdot v) \,h\right \|_{L^1(m^{-1})} \leq M \left ( \left \|h\right \|_{L^1(m^{-1})}+\left \| \LL_\alpha h\right \|_{L^1(m^{-1})}\right ). 
		  \]
		  We can then apply \cite[Thm. VII.2.6 and Rem. VII.2.7]{kato:1966} about the analyticity of the spectrum of a closed operator on a Banach space.
		\end{remark}

	\subsection{First Order Coefficients of the Taylor Expansion}
		
		We can now study in details for what values of the parameters $\lambda$ and $\alpha$ one can solve the projected eigenvalue problem \eqref{eqXiFinal}. We start by considering the behavior of the transverse sound waves.
		
		
		\begin{proposition} 
		  \label{propD0}
			Let $\omega \in \SSS^{d-1}$. There exist $\rho_0 > 0$ and $\alpha_4 \in (\alpha_3, 1]$ such that the problem of solving the equation
			\[D_\omega(\lambda, \rho, \alpha) = 0\] 
			has a unique solution $\lambda_\omega = \lambda_\omega(\rho, \alpha) \in \mathcal{C}^{\infty} \left ( (-\bar \rho_0, \bar \rho_0) \times (\alpha_4, 1]\right )$, verifying
			\begin{equation*}
			  \lambda_\omega(0,1) = \frac{\partial \lambda_\omega}{\partial \rho} (0,1) = \frac{\partial \lambda_\omega}{\partial \alpha} (0,1) = 0.  
			\end{equation*}
		\end{proposition}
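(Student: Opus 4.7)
The plan is to apply the scalar implicit function theorem to the equation $D_\omega(\lambda, \rho, \alpha) = 0$, solving for $\lambda$ as a function of $(\rho, \alpha)$ in a neighborhood of $(0, 1)$. Recall from \eqref{defD0Simp} that $D_\omega(\lambda, \rho, \alpha) = \langle \nu_\alpha v_1, \Upsilon_{(\lambda, \rho \bm{e}, \alpha)} v_1 \rangle_{F_1}$ with $\Upsilon = \Psi^{-1}(\Phi - \id)$. At the base point, the resolvent $\Phi_{(0, 0, 1)}$ reduces to multiplication by $\nu_1 \cdot \nu_1^{-1} = 1$, hence $\Phi_{(0,0,1)} = \id$ and consequently $\Upsilon_{(0, 0, 1)} = 0$, giving $D_\omega(0, 0, 1) = 0$.

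The crucial step is verifying $\partial_\lambda D_\omega(0, 0, 1) \neq 0$. Since $(\Phi - \id)|_{(0,0,1)} = 0$, the Leibniz term $\partial_\lambda(\Psi^{-1}) \cdot (\Phi - \id)$ vanishes at the base point and one gets $\partial_\lambda \Upsilon|_{(0,0,1)} = \Psi^{-1}_{(0,0,1)} \, \partial_\lambda \Phi|_{(0,0,1)}$. A direct differentiation of the resolvent formula yields $\partial_\lambda \Phi|_{(0,0,1)} = - \nu_1^{-1}$ as a multiplication operator. To handle the remaining $\Psi^{-1}_{(0,0,1)}$, I plan to invoke Lemma \ref{lemPsi0}: since $v_1 F_1 \in N_1$, the lemma allows one to transpose $\Psi^{-1}_{(0,0,1)}$ onto the left argument $\nu_1 v_1$, reducing the computation to
\[
\partial_\lambda D_\omega(0, 0, 1) \;=\; - \int_{\RR^d} v_1^2 \, F_1(v) \, dv \;=\; - \bar T_1 \;\neq\; 0,
\]
which provides the required nondegeneracy for the implicit function theorem.

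The vanishing of $\partial_\rho \lambda_\omega$ and $\partial_\alpha \lambda_\omega$ at $(0,1)$ then follows by computing $\partial_\rho D_\omega$ and $\partial_\alpha D_\omega$ at the base point and dividing by $\partial_\lambda D_\omega(0,0,1)$. For $\rho$, the same cancellation yields $\partial_\rho \Upsilon|_{(0,0,1)} = \Psi^{-1}_{(0,0,1)} \partial_\rho \Phi|_{(0,0,1)}$; since $\gamma = \rho \bm{e}$ enters through $i\rho v_1$, one has $\partial_\rho \Phi|_{(0,0,1)} = -i v_1 / \nu_1$, and applying Lemma \ref{lemPsi0} together with the parity of $F_1$ gives $\partial_\rho D_\omega(0,0,1) = -i \int v_1^3 F_1 \, dv = 0$. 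For $\alpha$, one observes that $\Phi = A^{-1} \nu_\alpha$ with $A = \nu_\alpha + \lambda + i\gamma \cdot v - (1-\alpha) \Delta_v$, so the $\partial_\alpha \nu_\alpha|_{\alpha=1}$ contributions from the two occurrences of $\nu_\alpha$ cancel, leaving $\partial_\alpha \Phi|_{(0,0,1)} = -\nu_1^{-1} \Delta_v$; applied to $v_1$ this is identically zero since $\Delta_v v_1 = 0$. The outer contribution $\langle \partial_\alpha \nu_\alpha|_1 v_1, \Upsilon_{(0,0,1)} v_1 \rangle_{F_1}$ also vanishes because $\Upsilon_{(0,0,1)} = 0$, so $\partial_\alpha D_\omega(0,0,1) = 0$.

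The main obstacle is not the algebra above but verifying $\mathcal{C}^\infty$ joint regularity of $D_\omega$ in $(\lambda, \rho, \alpha)$ so that the smooth implicit function theorem applies. Holomorphy in $(\lambda, \rho)$ follows from the holomorphic dependence of the resolvent $\Phi$ on $(\lambda, \gamma)$ combined with the bounded invertibility of $\Psi$ near the base point (Lemma \ref{lemInvPsi}), via a Neumann expansion; this is coherent with the analyticity statement of Remark \ref{remAnalyticEig}. Smoothness in $\alpha$ is more delicate and relies on the continuous dependence $\alpha \mapsto F_\alpha$ recalled in the appendix, together with sufficient regularity of $\partial_\alpha^k F_\alpha$ and $\partial_\alpha^k \nu_\alpha$ in the operator norm on $W^{2,1}(m^{-1})$. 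Once these regularity points are secured, the implicit function theorem produces a unique smooth branch $\lambda_\omega \in \mathcal{C}^\infty\bigl((-\bar\rho_0, \bar\rho_0) \times (\alpha_4, 1]\bigr)$ satisfying the three stated vanishing conditions.
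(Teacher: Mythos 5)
Your proposal is correct in substance but takes a different route from the paper at the key step. You apply the implicit function theorem directly to $(\lambda,\rho,\alpha)\mapsto D_\omega(\lambda,\rho,\alpha)$ at the base point $(0,0,1)$, verifying nondegeneracy via $\partial_\lambda D_\omega(0,0,1)=-\bar T_1\neq 0$ and then checking $\partial_\rho D_\omega(0,0,1)=\partial_\alpha D_\omega(0,0,1)=0$ by hand; all three computations are right (the transposition of $\Psi^{-1}_{(0,0,1)}$ onto $\nu_1 v_1$ via Lemma \ref{lemPsi0} is exactly the move the paper itself makes, and the cancellation of the $\partial_\alpha\nu_\alpha$ contributions together with $\Delta_v v_1=0$ is sound). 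The paper instead rescales, setting $z=\lambda/\rho$, $s=1-\alpha$ and applying the implicit function theorem to $G_\omega(z,\rho,s):=\rho^{-1}D_\omega(\rho z,\rho,1-s)$ at $(0,0,0)$, using $G_\omega(z,0,0)=z\,\bar T_1$. For the transverse relation the two are essentially equivalent because the root of $\lambda\mapsto D_\omega(\lambda,0,1)$ at the origin is simple, so your direct argument works; what the rescaling buys is (i) consistency with Proposition \ref{propD}, where the root is triple and the rescaling is genuinely needed to separate branches, and (ii) the identity $\lambda_\omega(\rho,\alpha)=\rho\, z_\omega(\rho,1-\alpha)$, which yields $\lambda_\omega(0,\alpha)\equiv 0$ and hence all three vanishing conditions for free, without computing $\partial_\rho D_\omega$ or $\partial_\alpha D_\omega$. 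One caveat you rightly flag but do not resolve: the joint $\mathcal C^\infty$ regularity in $\alpha$ is not actually furnished by the estimates recalled in the appendix (Propositions \ref{propHoldContOp} and \ref{propCvMaxwProfil} give only H\"older continuity in $\alpha$), so the smooth implicit function theorem in the $\alpha$ (or $s$) variable is not fully justified; this gap, however, is present in the paper's own argument as well and is not specific to your route.
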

		
		\begin{proof}
			Let us write thanks to the compact expression \eqref{defD0Simp} of $D_\omega$
			\begin{align*}
				0 & = - D_\omega(\lambda, \rho, \alpha) \\	
				 & = - \left \langle \left (\Psi_{(\lambda, \, \rho \bm{e}, \, \alpha)}^*\right )^{-1} ( \nu_\alpha v_1) , \left( \Phi_{(\lambda, \, \rho \bm{e}, \, \alpha)} - \id \right) v_1 \right \rangle_{F_1} \\
				 & = -\int_{\mathbb{R}^d} \left (\Psi_{(\lambda, \, \rho \bm{e}, \, \alpha)}^* \right )^{-1} ( \nu_\alpha v_1) \left [  \left ( \nu_\alpha(v) + \lambda + i \, (\rho \bm{e} \cdot v)  - (1-\alpha) \Delta_v\right )^{-1} \nu_\alpha(v) - \id \right ] (v_1) \,{F_1}(v) \, dv \\
				 & = \int_{\mathbb{R}^d} \left (\Psi_{(\lambda, \, \rho \bm{e}, \, \alpha)}^* \right )^{-1} ( \nu_\alpha v_1) \left ( \nu_\alpha(v) + \lambda + i \, \rho \, v_1  - (1-\alpha) \Delta_v\right )^{-1} \left ( \lambda + i \, \rho \, v_1  - (1-\alpha) \Delta_v\right ) (v_1) \\
				 & \qquad \qquad\qquad\qquad\qquad\qquad\qquad\qquad \qquad\qquad\qquad\qquad\qquad\qquad\qquad\qquad\qquad \qquad{F_1}(v) \, dv.
			\end{align*}
			Let us now set $z = \lambda/\rho$ and $s =  1-\alpha $. We shall take the limit $(\rho, s) \to (0, 0)$ in $D_\omega$. For this, we define a new function $G_\omega$ as
			\begin{equation*}
				G_\omega(z, \rho, s) := \frac{1}{\rho} D_\omega(\rho z, \rho, 1- s).
			\end{equation*}
			Then, as $\Delta_v (v_1) = 0$, we will have $D_\omega(\lambda, \rho, \alpha) = 0$ if and only if
			\begin{align*}
				0 & = -G_\omega(z, \rho, s) \\
				 & = \int_{\mathbb{R}^d} \left (\Psi_{(\rho z, \rho \bm{e}, 1- s)}^*\right )^{-1} ( \nu_{1- s} v_1) \left ( \nu_{1- s}(v) + \rho z + i \, \rho \, v_1  - s \Delta_v\right )^{-1} \left ( ( z + i v_1 ) \, v_1 \right ) {F_1}(v) \, dv.
			\end{align*}
			
			Moreover, if $\alpha \to 1$, thanks to the continuity of the equilibrium profiles $F_\alpha$ with respect to $\alpha$ (recalled in Proposition \ref{propCvMaxwProfil}) and to the smoothness properties of these profiles (recalled in Proposition \ref{propRegSelfSimProf}), we have $\nu_{\alpha}(v) \to \nu_1(v)$, uniformly in $v$. 
			Hence, if we take the limit $(\rho,s) \to (0,0)$, we find thanks to Lemma \ref{lemPsi0} that
			\begin{align*}
				0 & = - G_\omega(z, 0, 1) \\
				 & = \int_{\mathbb{R}^d} \left (\Psi_{(0, \, 0, \, 1)}^*\right )^{-1} ( \nu_1 v_1) \frac{z + i v_1}{\nu_1} (v_1) \, {F_1}(v) \, dv \\
				 & = z \int_{\mathbb{R}^d} v_1^2  \, {F_1}(v) \, dv = z \, \bar T_1.
			\end{align*}
			Provided that $\bar T_1$ is nonzero, we have $z = 0$.
			
			It just remains to apply the implicit function theorem to the map $(z, \rho, s) \mapsto G_\omega(z, \rho,s)$ in $(0,0,0)$. Provided that we have 
			\begin{equation*}
				\left \{ \begin{aligned}
					& G_\omega(0,0,0) = 0, \\
					& \frac{\partial G_\omega}{\partial z}(0,0,0) = {\bar T_1},
				\end{aligned} \right.
			\end{equation*}
			there exist two real constants $\bar \rho_0 > 0$, $\alpha_4 \in (\alpha_3, 1]$ and a mapping $z_\omega \in \mathcal{C}^\infty\left ((-\bar \rho_0, \bar \rho_0) \times [0, 1-\alpha_4)\right )$ such that if $|\rho| \leq \bar \rho_0$ and $ s \in [0, 1-\alpha_4)$, then
			\begin{equation*}
				\frac{1}{\rho} D_\omega\left (\rho z_\omega(\rho,s), \rho, 1- s \right ) = G_\omega\left( z_\omega(\rho,s), \rho, s\right ) = 0.
			\end{equation*}
			To conclude the proof, we set $\lambda_\omega(\rho,\alpha) := \rho z_\omega\left ( \rho,1-\alpha \right )$ and this function has the properties we were looking from.
		\end{proof}

		Let us now turn to the dispersion relations \eqref{eqDispRel}, corresponding to the longitudinal sound waves. 
		We recall the simplified expression of $D$ for the reader convenience:
    \begin{equation*}
			D(\lambda, \rho, \alpha) = \begin{vmatrix}
				\left \langle \nu_\alpha, \Upsilon_{(\lambda, \, \rho \bm{e}, \, \alpha)} \, 1\right \rangle_{F_1} & \left \langle \nu_\alpha, \Upsilon_{(\lambda, \, \rho \bm{e}, \, \alpha)} \, v_1 \right \rangle_{F_1} & \left \langle \nu_\alpha, \Upsilon_{(\lambda, \, \rho \bm{e}, \, \alpha)} \,g \right \rangle_{F_1} \\
				\left \langle \nu_\alpha \, v_1 , \Upsilon_{(\lambda, \, \rho \bm{e}, \, \alpha)} \, 1 \right \rangle_{F_1} & \left \langle \nu_\alpha\,  v_1, \Upsilon_{(\lambda, \, \rho \bm{e}, \, \alpha)} \, v_1\right \rangle_{F_1} &  \left \langle \nu_\alpha \, v_1, \Upsilon_{(\lambda, \, \rho \bm{e}, \, \alpha)} \,g \right \rangle_{F_1} \\
				\left \langle \nu_\alpha \,g,\Upsilon_{(\lambda, \, \rho \bm{e}, \, \alpha)} \, 1 \right \rangle_{F_1} & \left \langle \nu_\alpha \,g, \Upsilon_{(\lambda, \, \rho \bm{e}, \, \alpha)} \, v_1 \right \rangle_{F_1} & \left \langle \nu_\alpha \,g, \Upsilon_{(\lambda, \, \rho \bm{e}, \, \alpha)} \,g \right \rangle_{F_1}
			\end{vmatrix}.
		\end{equation*}
		We prove the following result concerning the behavior of the eigenvalues for small frequency and inelasticity.
		  
		\begin{proposition} 
		  \label{propD}
			For $\lambda \in U_2$ (see Lemma \ref{lemInvPsi}), there exists $\bar \rho > 0$ and $\alpha_5 \in (\alpha_4,1]$ such that for $\alpha \in (\alpha_5,1]$ the elastic dispersion relation $D(\lambda, \rho, \alpha ) = 0$ has exactly three branches of solutions $\lambda^{(j)}(\rho, \alpha)$ for all $j \in \{-1,0,1\}$ and $\rho \in (-\bar \rho_1, \bar \rho_1)$. 
			These solutions are of class $\mathcal{C}^{\infty}(-\bar \rho_1, \bar \rho_1)$ and verify
			\begin{equation*}
				\left\{ \begin{aligned} 
					& \lambda^{(j)}(0,1) = 0, && \forall \, j \in \{-1,0,1\}, \\
					& \frac{\partial \lambda^{(j)}}{\partial \rho}(0,1) = j \, i \, \sqrt{\bar T_1 + \frac{2\bar T_1^2}{d}}, && \forall \, j \in \{-1,0,1\}, \\
					& \frac{\partial \lambda^{(0)}}{\partial \alpha}(0,1) = - \frac{3}{\bar T_1},
				\end{aligned}\right.
			\end{equation*}
			where, $ \lambda^{(0)}$ is the so-called \emph{energy eigenvalue} and $\bar T_1$ is given by \eqref{defElasticEquilibTemperature}.
			Finally, we also have the symmetry properties
			\begin{equation}
			  \label{eqSymRel}
			  \lambda^{(j)}(-\rho, \alpha) = \overline{\lambda^{(j)}}(\rho, \alpha) = \lambda^{(-j)}(\rho, \alpha).
			\end{equation}
		\end{proposition}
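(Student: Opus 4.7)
The strategy closely follows Proposition \ref{propD0} and Ellis-Pinsky~\cite{ellis:1975}, refined to handle three simultaneous branches and incorporating the $\alpha$-dependence from \cite{mischler:20092}. First I expand the operator $\Upsilon_{(\lambda, \rho \bm{e}, \alpha)} = \Psi_{(\lambda, \rho \bm{e}, \alpha)}^{-1}(\Phi_{(\lambda, \rho \bm{e}, \alpha)} - \id)$ to leading order in $(\lambda, \rho, s)$ with $s := 1 - \alpha$. Using the identity
\[(\Phi_{(\lambda, \rho \bm{e}, \alpha)} - \id) h = -\bigl(\nu_\alpha + \lambda + i \rho v_1 - s \Delta_v\bigr)^{-1}\bigl(\lambda + i \rho v_1 - s \Delta_v\bigr) h,\]
together with Lemmas \ref{lemAsymptBehavPhi} and \ref{lemPsi0}, acting on $\{1, v_1, g\}$ (with $\Delta_v 1 = \Delta_v v_1 = 0$ and $\Delta_v g = 2d$), I obtain the leading-order expressions for the nine matrix entries in \eqref{defDetlambdaSimp}. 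Parity in $v_1$ with respect to $F_1$ and the elastic moment identities then reduce, after appropriate row and column operations, the determinant $D(\lambda, \rho, \alpha)$ to a polynomial of total degree three in $(\lambda, \rho, s)$ near the origin.

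Following Ellis-Pinsky, I set $z = \lambda/\rho$ and define $G(z, \rho, s) := \rho^{-3} D(\rho z, \rho, 1-s)$, which extends smoothly up to $\rho = 0$ when $s = 0$. The limit $G(z, 0, 0)$ is, up to a nonzero multiplicative constant, the cubic polynomial $z\bigl(z^2 + (\bar T_1 + 2 \bar T_1^2/d)\bigr)$, whose three simple roots are $z = 0$ and $z = \pm i \sqrt{\bar T_1 + 2 \bar T_1^2/d}$. For the two non-zero roots, the implicit function theorem (with $\partial_z G \neq 0$ there) produces smooth branches $z^{(\pm 1)}(\rho, s)$ defined in a neighborhood of $(0, 0)$. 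Setting $\lambda^{(\pm 1)}(\rho, \alpha) := \rho\, z^{(\pm 1)}(\rho, 1-\alpha)$ immediately yields $\lambda^{(\pm 1)}(0, 1) = 0$ and $\partial_\rho \lambda^{(\pm 1)}(0, 1) = \pm i \sqrt{\bar T_1 + 2 \bar T_1^2/d}$.

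The main obstacle is the energy branch $\lambda^{(0)}$: the rescaling $\lambda = \rho z$ forces $\lambda^{(0)}(0, \alpha) \equiv 0$, which is incompatible with the fact that at $\rho = 0$ this branch must coincide with the nonzero energy eigenvalue $\lambda_E(\alpha)$ of the space-homogeneous operator $\LL_\alpha$. I handle this by constructing $\lambda^{(0)}$ from two partial pieces of data glued together by analyticity. At $\alpha = 1$, the simple root $z = 0$ of the Ellis-Pinsky cubic still gives $\lambda^{(0)}(\rho, 1) = O(\rho^2)$, hence $\partial_\rho \lambda^{(0)}(0, 1) = 0$. At $\rho = 0$, the branch reduces to the space-homogeneous energy eigenvalue $\lambda_E(\alpha)$ studied in \cite{mischler:20092}, which is analytic in $\alpha$ with derivative $\partial_\alpha \lambda_E(1) = -3/\bar T_1$. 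Kato's analyticity theorem (Remark \ref{remAnalyticEig}) ensures that these two partial characterizations glue uniquely into a single analytic function $\lambda^{(0)}(\rho, \alpha)$ near $(0, 1)$ with the required partial derivatives. Finally, the symmetry relation \eqref{eqSymRel} follows from the identity $\overline{\LL_{\alpha, \rho \omega} h} = \LL_{\alpha, -\rho \omega} \bar h$: complex conjugation maps eigenvalues $\lambda$ of $\LL_{\alpha, \rho \omega}$ to eigenvalues $\bar \lambda$ of $\LL_{\alpha, -\rho \omega}$, and the involution $j \mapsto -j$ exchanges the non-zero roots $z = \pm i \sqrt{\bar T_1 + 2 \bar T_1^2/d}$ while fixing $z = 0$, matching this symmetry at the level of the labeling.
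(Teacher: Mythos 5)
Your overall route is the paper's own: the rescaling $z=\lambda/\rho$, $s=1-\alpha$, the cubic limit $G(z,0,0)=2d\,\bar T_1^2\,z\bigl(z^2+\bar T_1+2\bar T_1^2/d\bigr)$, and the implicit function theorem at the simple roots (the paper in fact applies it at all three roots $z_0,z_{\pm1}$, including $z_0=0$). However, you never prove the word \emph{exactly} in the statement: the implicit function theorem yields at least three local branches, not that they exhaust the solution set of $D(\lambda,\rho,\alpha)=0$ near $(0,0,1)$. The paper closes this with an argument-principle step: $\lambda\mapsto D(\lambda,0,1)=\lambda^3 H(\lambda)$ with $H(0)=1$ winds three times around the origin along a small circle, and the uniform convergence $\sup_{\lambda\in U_2}|D(\lambda,\rho,\alpha)-D(\lambda,0,1)|\to 0$ as $(\rho,\alpha)\to(0,1)$ preserves that winding number, so there are precisely three roots. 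Some such Rouch\'e-type count is needed in your write-up.

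The second, more serious gap concerns the energy branch. You correctly sense the tension created by the ansatz $\lambda=\rho z$ (which forces $\lambda^{(0)}(0,\alpha)=0$), but your proposed repair is not a proof: knowing the restrictions of a function of two variables to the two axes $\{\alpha=1\}$ and $\{\rho=0\}$ does not determine the function, and Remark \ref{remAnalyticEig} provides analyticity in $\rho$ only --- joint analyticity in $\alpha$ is nowhere available (the dependence of $\mathcal Q_\alpha^+$ on $\alpha$ is only H\"older, Proposition \ref{propHoldContOp}), so the ``gluing by Kato analyticity'' step has no support. Moreover, you simply quote $\partial_\alpha\lambda_E(1)=-3/\bar T_1$ from \cite{mischler:20092}, whereas this value is one of the assertions to be established. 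The paper derives it: integrate the eigenvalue equation for $h^{(0)}_{(\rho\omega,\alpha)}$ against $|v|^2$, use the dissipation functional \eqref{defDissipFunctional} and the vanishing of the mass of $h^{(0)}_{(0,1)}$, divide by $1-\alpha$, let $\rho\to0$ and $\alpha\to1$, and compute $\mathcal E\bigl(h^{(0)}_{(0,1)}\bigr)=2d\,c_0\,\bar T_1^2$ and $D\bigl(F_1,h^{(0)}_{(0,1)}\bigr)=\tfrac32 d\,c_0\,\bar T_1$, which gives $-2\cdot2\cdot\tfrac32 d\,c_0\,\bar T_1/(2d\,c_0\,\bar T_1^2)=-3/\bar T_1$. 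You need to carry out a computation of this kind rather than cite the conclusion. The symmetry argument at the end of your proposal is fine and matches the paper's.
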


		\begin{proof}
		  We shall use the ideas introduced in the proof of Proposition \ref{propD0}: instead of solving directly the equation \eqref{eqDispRel}, we want to solve an equivalent one depending on $z = \lambda/\rho$ and we set to simplify $s =  1-\alpha $. We then introduce a function $G = G(z, \rho, s)$ by setting
			\begin{equation*}
				G(z, \rho, s) := \frac{1}{\rho^3} D(\rho z, \rho, 1- s). 
			\end{equation*}
			According to the simplified expression \eqref{defDetlambdaSimp} of $D$, all the components of the matrix found in $G(z, \rho, s)$ can be written for $h_1, \ h_2 \in N_1$
			\begin{multline*}
			   \frac 1 \rho \left \langle \nu_{1-s} \, h_1 , \Upsilon_{(\rho z, \, \rho \bm{e}, \, 1-s)} \, h_2\right \rangle_{F_1} = \int_{\mathbb{R}^d} \left (\Psi_{(\rho z, \, \rho \bm{e}, \, 1- s)}^*\right )^{-1} ( \nu_{1- s} h_1)(v) \\
			   \left ( \nu_{1- s}(v) + \rho z + i \, \rho \, v_1  - s \Delta_v\right )^{-1} \left ( z + i v_1 - \frac s \rho \Delta_v \right )( h_2)(v) F_1(v) \, dv.
			\end{multline*}
			By doing the same computations than in the proof of Proposition \ref{propD0}, this quantity becomes for $\rho = s = 0$
			\begin{equation*}
				\int_{\mathbb{R}^d}  h_1(v) ({z + i v_1 }) h_2 (v) \, F_1(v)  \, dv.
			\end{equation*}					
			Moreover, according to the definition of the Maxwellian distribution $F_1$, we have
			\begin{equation*}
			  \int_{\mathbb{R}^d} \begin{pmatrix} 1 \\ |v|^2 \\ v_1^2 \, |v|^2 \end{pmatrix} F_1(v) \, dv
			  = \begin{pmatrix} 1 \\ \bar T_1 \\ \left (d+2\right ) \bar T_1^2  \end{pmatrix}.
			\end{equation*}	
			Thus, we can write $D$ as
			\begin{align*}
				G(z, 0, 1) & = \begin{vmatrix}
					\langle 1, z + i v_1 \rangle_{F_1} & \langle 1, (z + i v_1) v_1 \rangle_{F_1}  & \langle 1, (z + i v_1) g\rangle_{F_1} \\
					\langle v_1, z + i v_1 \rangle_{F_1} & \langle v_1, (z + i v_1) v_1 \rangle_{F_1} & \langle v_1, (z + i v_1) g \rangle_{F_1} \\
					\langle g, z + i v_1 \rangle_{F_1} & \langle g, (z + i v_1) v_1 \rangle_{F_1} & \langle g, (z + i v_1) g \rangle_{F_1} \\	
				\end{vmatrix} \\
				 & = \begin{vmatrix}
					z          & i \, \bar T_1  & z \left ( d \, \bar T_1 - c_\nu \right ) \\
					i \, \bar T_1 & z \, \bar T_1  & i \, \bar T_1 \left ( \left (d+2\right )\bar T_1  - c_\nu \right ) \\
					z \left ( d \, \bar T_1 - c_\nu \right ) & i \, \left ( \left (d+2\right )\bar T_1 - c_\nu \right ) & z \left ( \left ( d \, \bar T_1 - c_\nu\right )^2 + 2 d \, \bar T_1^2 \right ) 
				\end{vmatrix}  \\
			  & = 2   \, \bar T_1^2 \, z \left (d z^2 +d\,\bar T_1+2 \, \bar T_1^2\right ) \\			  
				& = 2 d \, \bar T_1^2 \, (z-z_{-1}) ( z - z_0) \, (z-z_{+1}),
			\end{align*}
			where we have set for any $j \in \{-1,0,+1\}$
			\[z_j := j \, i \, \sqrt{\bar T_1 + \frac{2 \bar T_1^2}{d}}.\] 
			Hence, provided that $G(z, 0, 1)$ has no multiple root, we have shown that
			\begin{equation*}
				\left \{ \begin{aligned}
					& G(z_j, 0,0) = 0, \\
					& \frac{\partial G}{\partial z}(z_j, 0,0) \neq 0,
				\end{aligned} \right.
			\end{equation*}
			and we can apply again the implicit function theorem to show that in a neighborhood $\mathcal B \times (-\bar \rho_1, \bar \rho_1) \times [0, 1 - \alpha_5)$ of $(z_j, 0)$, there exists an unique function $\widetilde{z_j} \in \mathcal{C}^{\infty}\left ((-\bar \rho_1, \bar \rho_1) \times [0, 1 - \alpha_5)\right )$ such that if $|\rho| \leq \bar \rho_1$ and $s \in [0, 1 - \alpha_5)$, then $G(\widetilde{z_j}(\rho,s), \rho, s) = 0$ (and of course $\widetilde{z_j}(0,0) = z_j$). 
			We finally set $\lambda^{(j)}(\rho,s) := \rho \widetilde{z_j}(\rho,s)$, which give a solution to  \eqref{eqDispRel} (with $\alpha = 1-s$) verifying
			\begin{equation*}
				\left \{ \begin{aligned}
					& \lambda^{(j)}(0,0) = 0, \\
					& \frac{\partial \lambda^{(j)}}{\partial \rho}(0,0) = z_j. \\
				\end{aligned} \right .
			\end{equation*}
			We now have to prove that these three branches are the only solutions to $D(\lambda, \rho, \alpha) = 0$ for small $\rho$ and $1-\alpha$.
			
			For this, following once more \cite{ellis:1975}, we shall use tools from complex analysis. Let us fix $|\rho| \leq \bar \rho_1$ and $\alpha \in (\alpha_5,1]$; according to the definition of $D$, the map $\lambda \mapsto D(\lambda, \rho,\alpha)$ is holomorphic on the set $U_2$ (defined Lemma \ref{lemInvPsi}). 
			Moreover, following the previous computations, we also have $D(\lambda, 0, 1) = \lambda^3 H(\lambda)$ for a function $H$ holomorphic on $U_2$ such that $H(0) = 1$. Hence, if $\lambda$ is defined along a circle $\mathcal C$ around $0$, then $D(\lambda, 0,1)$ will encircle the origin exactly three times. 
			Using the strong convergence of the multiplication operator $\Phi_{(\lambda, \, \rho \bm{e}, \, \alpha)}$ towards $\id$ when $(\rho, \, \alpha) \to (0,1)$, we can write
			\begin{equation*}
				\lim_{(\rho, \, \alpha) \to (0,1)} \sup_{\lambda \in U_2} | D(\lambda, \rho, \alpha) - D(\lambda, 0, 1)| = 0.
			\end{equation*}
			Hence, for small $(\rho, 1- \alpha)$, the function $\lambda \mapsto D(\lambda, \rho, \alpha)$ encircles the origin also only three times when $\lambda$ traverses $\mathcal C$. This function then only has three roots for fixed $\rho$ and $\alpha$.
			
			Next, we compute the partial derivative with respect to $\alpha$ of the energy eigenvalue. This eigenvalue is given by the solution of the dispersion relation that depends on $\rho$ only at second order, namely $\lambda^{(0)}$. 
			Let $h^{(0)}_{(\rho \, \omega, \, \alpha)}$ be the associated eigenvector. We then have for all $\omega \in \SSS^{d-1}$ and $\rho \geq 0$
			\begin{equation}
			  \label{defEnergyEigenEq}
			  \LL_{(\alpha,\gamma)} \, h^{(0)}_{(\rho \, \omega, \, \alpha)}(v) \, = \, \lambda^{(0)}(\rho,\alpha) \, h^{(0)}_{(\rho \, \omega, \, \alpha)}(v), \quad \forall \, v \in \RR^d.
			\end{equation}
			In particular, the ``elastic, space homogeneous'' energy eigenvector $h^{(0)}_{(0,1)}$ is defined thanks to the Maxwellian profile $F_1$  (given in \eqref{defElasticEquilibTemperature}) as
			\begin{equation}
			  \label{eqEigenValEnergElastic}
			  h^{(0)}_{(0,1)} = c_0 \left ( |v|^2 - d \, \bar T_1\right ) F_1,
			\end{equation}
			where $c_0$ is a normalizing constant. We have by construction, using some elementary properties of Gaussian functions 
			\[ \left \| h^{(0)}_{(0,1)} \right \|_{L^1(m^{-1})} = 1, \quad N\left ( h^{(0)}_{(0,1)}\right ) = 0, \quad \mathcal E \left ( h^{(0)}_{(0,1)}\right ) = 2 \, c_0 \, d \, \bar T_1^2, \]
			where we have defined the \emph{mass} $N(f)$ and the \emph{kinetic energy} $\mathcal E(f)$ of a given distribution $f$ as
			\[ N(f) := \int_{\RR^d} f(v) \, dv, \quad \mathcal E(f) := \int_{\RR^d} f(v) \, |v|^2 \, dv.\]
			
			By integrating the eigenvalue equation \eqref{defEnergyEigenEq} against $|v|^2$ we obtain according to the expression of the energy dissipation functional \eqref{defDissipFunctional}
			\begin{multline*}
			  \lambda^{(0)}(\rho,\alpha) \mathcal{E}\left (h^{(0)}_{(\rho \, \omega, \, \alpha)}\right ) = \\
			  -2 (1-\alpha^2) \, D\left (F_\alpha,  h^{(0)}_{(\rho \, \omega, \alpha)}\right ) + 2 d N(1-\alpha)\left (h^{(0)}_{(\rho \, \omega, \, \alpha)}\right ) + i {\rho} \omega \cdot \int_{\RR^d} h^{(0)}_{(\rho \, \omega, \, \alpha)}(v) \,v \, |v|^2 \, dv.
			\end{multline*}
			As $\rho$ tends to $0$, dividing by $1-\alpha$ yields
			\begin{equation*}
			  \frac{\lambda^{(0)}(0,\alpha)}{1-\alpha} \mathcal{E}\left (h^{(0)}_{(0, \alpha)}\right ) = -2 (1+\alpha) \, D\left (F_\alpha,  h^{(0)}_{(0, \alpha)}\right ) + 2 d N\left (h^{(0)}_{(0, \alpha)}\right ).
			\end{equation*}
			Now, we use the rate of convergence of the inelastic profile $F_\alpha$ towards the elastic one $F_1$ recalled in Proposition \ref{propCvMaxwProfil} and the smoothness of $ h^{(0)}_{(0, \alpha)}$ with respect to $\alpha$ obtained thanks to the use of the implicit functions theorem. We then obtain thanks to the nullity of the mass of $h^{(0)}_{(0,1)}$
			\begin{equation}
			  \label{eqLambda10alpha}
			  \frac{\lambda^{(0)}(0,\alpha)}{1-\alpha} \mathcal{E}\left (h^{(0)}_{(0, 1)}\right ) = -2 (1+\alpha) \, D\left (F_1,  h^{(0)}_{(0,1)}\right ) + \mathcal O(1-\alpha).
			\end{equation}
			Finally, we compute thanks to the expression of the elastic energy eigenvector \eqref{eqEigenValEnergElastic} and to the definition \eqref{defElasticEquilibTemperature} of the equilibrium temperature the quantities
			\[ \mathcal{E}\left (h^{(0)}_{(0, 1)}\right ) = 2 \, d \, c_0 \, \bar T_1^2, \quad D\left (F_1,  h^{(0)}_{(0,1)}\right ) = \frac 32 d \, c_0 \,  \bar T_1.\]
			Gathering these relations and passing to the limit $\alpha \to 1$ in \eqref{eqLambda10alpha} gives the result.

			Concerning the last assertion of the proposition, we notice thanks to the invariance of the eigenvalue problem \eqref{pbEigen} under the composition of the convex conjugation and the reflection $\gamma \to -\gamma$ that $\overline D(\lambda, \rho, \alpha) = D(\overline \lambda, -\rho, \alpha) = D(\overline \lambda, \rho, \alpha)$.
		\end{proof}
		
			\begin{remark}
			  As a consequence of the symmetry  relation \eqref{eqSymRel}
			  \[\lambda^{(j)}(-\rho, \alpha) = \overline{\lambda^{(j)}}(\rho, \alpha) = \lambda^{(-j)}(\rho, \alpha),\]
			  we have $\lambda^{(0)}(\rho, \alpha) \in \RR$.
			\end{remark}

			Thanks to this proposition, we can construct the $d+2$ normalized hydrodynamic eigenvectors
			\[\left (h^{(j)}_{(\rho\,\omega, \, \alpha)} \right )_{j \in \{-1, \ldots, d\}}\]
			of the inelastic linearized collision operator, for small $\rho$,  $\alpha$ close to $1$ and a given $\omega \in \SSS^{d-1}$. 
			Indeed, on the one hand, for $j \in \{2, \ldots, d\}$, we take $\lambda = \lambda_\omega(\rho, \alpha)$ for $|\rho| \leq \bar \rho_0$ and $\alpha \in (\alpha_4,1]$ given by Proposition \ref{propD0} and choose in \eqref{eqXVect} $x_0 =x_{d+1} = 0$ and any vector $x \in \omega^\perp$.
	    The relation \eqref{eqfPsi} then allows us to construct the eigenvectors $h^{(j)}_{(\rho, \,\omega, \, \alpha)}$ associated to the conservation of momentum.
	    	
	    On the other hand, for $j \in \{-1, 0, 1 \}$, we pick a solution $\lambda = \lambda^{(j)}(\rho, \, \alpha)$ for $|\rho| \leq \bar \rho_1$ and $\alpha \in (\alpha_5,1]$ to the dispersion relation $D(\lambda, \rho, \alpha)=0$ given by Proposition \ref{propD} and choose the vector $\left (x_0, x \cdot \omega, x_{d+1}\right )$ to be a solution to the system \eqref{eqX0}--\eqref{eqXomega}--\eqref{eqXd1} corresponding to this eigenvalue. 
	    We then set $x = (x \cdot \omega) \, \omega$ and recover through \eqref{eqXVect}
	    \[ \mathcal P h^{(j)}_{(\rho\,\omega, \, \alpha)}(v) = x_0(\rho, \, \alpha) + (x \cdot v)(\rho, \omega \cdot v, \, \alpha) + x_{d+1}(\rho, \, \alpha) \left (|v|^2 - c_\nu\right ).\]
	    Inserting this expression in \eqref{eqfPsi} finally gives us the eigenvalue, depending on $\rho$, $\alpha$ (as a $\mathcal C^\infty$ function), $|v|$ and $v \cdot \omega$.
	    With this procedure, we have constructed three independent solutions (corresponding to the acoustic waves and the kinetic energy) $h^{(j)} = h^{(j)}_{(\rho\, \omega, \, \alpha)} \in L^1(m^{-1})$ to the eigenvalue problem
	    \[ \left (- i\rho (\omega \cdot v) + \mathcal{L}_\alpha \right ) h^{(j)} = \lambda^{(j)} \,h^{(j)}, \quad \forall \, j \in \{-1, 0, 1 \} .\]

	  \subsection{Higher Order Expansion}
	    \label{subHighOrder}
	    
	    We are interested in this section to give an expression for the expansion of the eigenvalues with respect to the spatial coordinate $\gamma = \rho \, \omega$.
	    We have seen in Remark \ref{remAnalyticEig} that for a fixed $\omega \in \SSS^{d-1}$, the eigenvalues $\lambda^{(j)}(\rho, \, \alpha)$ and eigenvectors $h^{(j)}_{(\rho\,\omega, \, \alpha)}$ are analytic functions of the radial coordinate $\rho$ and the inelasticity $1-\alpha$. 
	    Hence, we have for any $v \in \RR^d$
	    \begin{align}
	      & \lambda^{(j)}(\rho, \, \alpha) = \sum_{n \geq 0} \lambda^{(j)}_n \rho^n + (1-\alpha) e^{(j)}_1 + \mathcal{O}\left ((1-\alpha)^2 + (1-\alpha)\rho\right ), \label{eqDevelLambda} \\ 
	      & h^{(j)}_{(\rho\, \omega, \, \alpha)}(v) = \sum_{n \geq 0} h^{(j)}_n(\omega)(v) \rho^n + (1-\alpha) f^{(j)}_1(v) + \mathcal{O}\left ((1-\alpha)^2 + (1-\alpha)\rho\right ). \label{eqDevelH}
	    \end{align}
	    According to the computations of the previous subsection, the first order components of this expansion are given by
			\begin{equation*}
				\left\{ \begin{aligned} 
					& \lambda^{(j)}_0 = 0, && \forall \, j \in \{-1,\ldots,d\}, \\
					& \lambda^{(j)}_1 = j \, i \, \sqrt{\bar T_1 + \frac{2\bar T_1^2}{d}}, && \forall \, j \in \{-1,0,1\}, && \lambda^{(j)}_1 = 0, && \forall \, j \in \{2,\ldots,d\}, \\
					& e_1^{(0)} = - \frac{3}{\bar T_1}, && && e_1^{(j)} = 0,&& \forall \, j \in \{-1,1,\ldots,d\}.
				\end{aligned}\right.
			\end{equation*}
			We also have for any $v \in \RR^d$
			\[ h^{(0)}_0(v) = c_0 \left ( |v|^2 - d \, \bar T_1\right ) F_1,\]
			for a nonnegative normalizing constant $c_0$.
	    Since the triple $\left (\rho \, \omega, \lambda^{(j)}(\rho, \, \alpha), h^{(j)}_{(\rho\, \omega, \, \alpha)}\right )$ is solution to the eigenvalue problem \eqref{pbEigenValueLambdaGammaRho}, we can equate the power of $\rho$ and $1-\alpha$ in \eqref{eqDevelLambda}--\eqref{eqDevelH} to obtain
	    \begin{equation}
	      \label{eqExpansionRecurs}
	      \left \{ \begin{aligned}
	        & \LL_0 \, h^{(j)}_0(\omega) = 0, && \forall \, j \in \{-1,\ldots,d\},\\
	        & \LL_0 \, h^{(j)}_1(\omega) = \left (\lambda^{(j)}_1 + i (\omega \cdot v)\right )h^{(j)}_0(\omega), && \forall \, j \in \{-1,\ldots,d\},\\
	        & \LL_0 \, h^{(j)}_n(\omega) = \left (\lambda^{(j)}_1 + i (\omega \cdot v)\right )h^{(j)}_{n-1}(\omega) + \sum_{k=2}^{n} \lambda^{(j)}_k h^{(j)}_{n-k}(\omega),  && \forall \, j \in \{-1,\ldots,d\}, \ n \geq 2,
	      \end{aligned}\right.
	    \end{equation}
	    where we also used the smoothness of $\LL_\alpha$ with respect to $1-\alpha$ (Proposition \ref{propHoldContOp}).
	    
	    Hence, the coefficients of the expansion can be computed by induction. For example, to compute $\lambda^{(j)}_2$, we can integrate the eigenvalue problem \eqref{pbEigenValueLambdaGammaRho} with respect to $|v|^2$ and use the equations \eqref{eqExpansionRecurs} with $n=2$ to obtain
	    \begin{equation}
	      \label{eqLambdajExplicit}
	      \lambda^{(j)}_2 = -\frac{i}{2 \, d \, c_0 \, \bar T_1^2} \, \omega \cdot {q\left (h^{(j)}_1(\omega)\right )},
      \end{equation}	     
      where we have set
      \[ q(h) := \int_{\RR^d} h(v) \, v \, |v|^2 \, dv.\]
      Now, using again \eqref{eqExpansionRecurs} for $n=1$, we know that
      \[\LL_0 \, h^{(j)}_1(\omega) = \left (\lambda^{(j)}_1 + i (\omega \cdot v)\right )h^{(j)}_0(\omega). \]
      Since $\lambda^{(j)}_1$ is an imaginary number and $h^{(j)}_0$ a real number (it is the elastic, space homogeneous eigenvector), we have that $h^{(j)}_1(\omega)(v)$ is also imaginary for all $v \in \RR^d$.
      Gathering this information with the explicit representation \eqref{eqLambdajExplicit}, we obtain that for any $j \in \{-1,\ldots,d\}$, the second order expansion in $\rho$ of $\lambda^{(j)}$, denoted by $\lambda^{(j)}_2$ is nonpositive\footnote{Some explicit computations are given in the $L^2$ case in \cite[Section 4]{ellis:1975}.}.
      The higher order expansions can be computed by the same induction process.
      
      This concludes the proof of Theorem \ref{thmTaylorExpSpectrum}.

	\section*{Acknowledgment}
	  The research of the author was granted by the ERC Starting Grant 2009 \#239983 (NuSiKiMo), NSF Grants  \#1008397 and \#1107444 (KI-Net) and ONR grant \#000141210318.
	  The author would like to thanks F. Filbet and C. Mouhot for their careful reading and fruitful comments on the manuscript.
		
	\begin{appendix}
	  
	  \section{Functional Toolbox on the Collision Operator}
	    \label{secFuncToolbox}
	    
	    Let us present some important properties concerning the granular gases operator we heavily used on this paper.

			To be consistent with \cite{mischler:20091}, we shall define for $\delta > 0$ the regularized operator
			\begin{equation*} 
			  \mathcal L_{1, \delta} \, = \, \mathcal L_{1, \delta}^+ - \mathcal L^* - \mathcal L^\nu,
			\end{equation*}
			where $\mathcal L_{1, \delta}^+$ is the regularization of the truncated gain term introduced in \cite{Mouhot:2006}.
	    One of the key properties of the regularized operator is that it converges towards $\mathcal L_1$ when $\delta \to 0$ in the norm of graph of $L^1(m^{-1})$ (and also in the weighted Sobolev spaces $W^{k,1}_q(m^{-1})$) but with a loss of integration weights:
	    \begin{proposition}[Proposition 5.5 of \cite{mischler:20091}]
	      \label{propApproxL1}
	      For any $k,q \in \NN$, we have
	      \[ \left \| \left (  \mathcal L_{1, \delta} -  \mathcal L_{1}\right )g\right \|_{W^{k,1}_q(m^{-1})} \leq \ve(\delta) \, \|g\|_{W^{k,1}_{q+1}(m^{-1})}, \]
	      where $\ve(\delta)$ is an explicit constant, going to $0$ as $\delta \to 0$.
	    \end{proposition}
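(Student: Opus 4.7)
The plan is as follows. First, since the regularization only modifies the gain term---the multiplication operator $\mathcal{L}^{\nu_1}$ and the convolution-type loss $\mathcal{L}^*$ involve only $F_1$ and the cross-section $b$, which are untouched---the whole difference reduces to $\mathcal{L}_{1,\delta}^+-\mathcal{L}_1^+$. So I only need to estimate this gain-term difference in $W^{k,1}_q(m^{-1})$, and by a Leibniz/induction argument I treat the base case $k=q=0$ first and reduce the higher-order weighted norms to it.

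The regularization of Mouhot replaces the kernel $|u|\,b(\widehat u\cdot\sigma)$ by a truncated and mollified version that cuts off (i) small relative velocities $|u|\leq\delta$, (ii) grazing angles $\widehat u\cdot\sigma\geq 1-\delta$, and smooths $b$ on angular scale $\delta$. The difference $\mathcal{L}_{1,\delta}^+-\mathcal{L}_1^+$ therefore splits into a \emph{truncation} piece, supported on the small excised region, and a \emph{mollification} piece supported where $b$ differs from its smoothed version. I would estimate each in duality: pair against a test function $\phi$ with $\|\phi\|_{L^\infty(m)}\leq 1$, use the weak form \eqref{defBoltzweak}, and perform the usual pre-/post-collisional change of variables. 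For the truncation piece, the vanishing measure of the excised set together with the elementary bound $|u|\leq\langle v\rangle+\langle v_*\rangle$ yields control by $\ve(\delta)\,\|g\|_{L^1_1(m^{-1})}$, the extra $\langle v\rangle$ factor being the exact origin of the loss of one polynomial weight. The mollification piece is handled via the Lipschitz regularity of $b$ assumed in \eqref{hypCrossSec1}, which provides an $\mathcal{O}(\delta)$ factor, the $v_*$-dependence being absorbed by the Gaussian decay of $F_1$.

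For $k\geq 1$ or $q\geq 1$, I would commute derivatives and polynomial weights through the gain operator: any $\partial_v$ falling on the equilibrium $F_1$ produces another fast-decaying Gaussian that $m^{-1}$ absorbs, while $\partial_v$ falling on $g$ reduces to the base estimate applied to $\partial^\beta g$; the polynomial weight $\langle v\rangle^q$ is treated via $\langle v\rangle^q\leq C(\langle v'\rangle^q+\langle v'_*\rangle^q)$ inside the weak formulation. The main obstacle is the exponentially weighted $L^1$ setting: unlike the Hilbertian $L^2(\mathcal{M}_{1,0,1}^{-1})$ framework, there is no spectral-theoretic shortcut and one cannot move $m^{-1}$ freely across the collision integral, so the proof rests on careful pointwise control of the ratio $m^{-1}(v)/(m(v')\,m(v_*))$ under the elastic collision map---precisely the Dunford--Pettis-type estimates developed in \cite{Mouhot:2006, mischler:20091}---which are what force the precise one-unit loss of polynomial weight appearing in the statement.
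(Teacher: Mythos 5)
The paper itself does not prove Proposition \ref{propApproxL1}: it is quoted verbatim in the appendix toolbox as Proposition 5.5 of \cite{mischler:20091}, so there is no internal proof to compare your argument against. Judged on its own terms, your outline is consistent with the strategy of \cite{Mouhot:2006,mischler:20091}: the regularization touches only the gain term, so the difference reduces to $\mathcal{L}^+_{1,\delta}-\mathcal{L}^+_{1}$; this splits into a truncation error (small relative velocities, grazing angles) and a mollification error controlled by the Lipschitz continuity of $b$ from \eqref{hypCrossSec1}; both are estimated by duality against $L^\infty(m)$ through the weak form \eqref{defBoltzweak} and the pre-/post-collisional change of variables; the hard-sphere factor $|u|\le \langle v\rangle+\langle v_*\rangle$ is indeed the source of the one-unit loss of polynomial weight; and derivatives distribute over $\Q^+$ by translation invariance, reducing $W^{k,1}_q$ to the base case.

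Two points need correction or completion. First, the pointwise weight inequality you invoke is written the wrong way around: what the gain term requires in the weak form (taking $\psi=\sgn(\cdot)\,m^{-1}$) is $m^{-1}(v')\le m^{-1}(v)\,m^{-1}(v_*)$, i.e.\ boundedness of $m^{-1}(v')\,m(v)\,m(v_*)$, not of $m^{-1}(v)/(m(v')\,m(v_*))$, which is unbounded; the correct inequality holds because $|v'|^2\le|v|^2+|v_*|^2$ for elastic collisions and $t\mapsto t^{s/2}$ is subadditive for $s\le 2$. Second, your argument as stated shows only that the error vanishes as $\delta\to 0$, whereas the proposition asserts an \emph{explicit} rate $\ve(\delta)$: obtaining it requires the concrete form of the truncation and mollification in the Carleman representation of $\Q^+$ used in \cite{Mouhot:2006}, and a complete proof would have to carry those constants through. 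As a blind reconstruction of the cited result the plan is sound; as a proof it remains a sketch at the key quantitative step.
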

	    
	    We then state a result about the H\"older continuity (in the norm of the graph) of the  gain term of the granular gases operator with respect to the restitution coefficient $\alpha$.
	    \begin{proposition}[Proposition 3.2 of \cite{mischler:20091}]
	      \label{propHoldContOp}
	      For any $\alpha, \alpha' \in (0, 1]$, and any $g \in L^1_1(m^{-1})$, $f \in W^{1,1}_1(m^{-1})$, there holds
	      \begin{equation*}
	        \left \{ \begin{aligned}
	          & \left \| \Q_\alpha^+ (g,f) - \Q_{\alpha'}^+ (g,f) \right \|_{L^1(m^{-1})} \leq \ve  \left (\alpha - \alpha'\right ) \|f\|_{W^{1,1}_1(m^{-1})} \| g \|_{L^1_1(m^{-1})}, \\
	          & \left \| \Q_\alpha^+ (f,g) - \Q_{\alpha'}^+ (f,g) \right \|_{L^1(m^{-1})} \leq \ve \left (\alpha - \alpha'\right ) \|f\|_{W^{1,1}_1(m^{-1})} \| g \|_{L^1_1(m^{-1})},
	        \end{aligned} \right.
	      \end{equation*}
	      where we have set 
	      \[\ve(r) = C \,r^{\frac{1}{3+4s}}\] for a constant $s$ given by the weight function $m(v)= \exp \left (- a\, |v|^s\right )$.
	    \end{proposition}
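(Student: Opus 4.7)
The plan is to prove the Hölder continuity by a standard regularization-plus-optimization argument, capitalizing on the fact that the post-collisional velocities $'v$ and $'v_*$ depend smoothly (in fact, linearly) on the restitution coefficient $\alpha$. First I would write
\[
\Q_\alpha^+(g,f) - \Q_{\alpha'}^+(g,f) = \int_{\RR^d\times \SSS^{d-1}} |u|\,b(\widehat u\cdot\sigma)\left(\tfrac{\, 'g\,'f_*}{\alpha^2} - \tfrac{\, 'g_{\alpha'}\,'f_{*,\alpha'}}{\alpha'^2}\right) d\sigma\,dv_*,
\]
split off the elementary $\alpha$-dependence of the Jacobian factor $1/\alpha^2$ (which contributes a harmless $|\alpha-\alpha'|$ term), and then telescope the product $\,'g\,'f_*$ to reduce matters to bounding the difference $|f('v_\alpha)-f('v_{\alpha'})|$ weighted by $g_*$ and integrated against $m^{-1}\langle v\rangle$.

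The main step is to control that difference in spite of the fact that $f$ lies only in $W^{1,1}_1(m^{-1})$ (no pointwise bound on $\nabla f$). I would introduce a convolution regularization $f_\eta = f * \chi_\eta$ at scale $\eta > 0$ and write $f = f_\eta + (f - f_\eta)$. For $f_\eta$, the mean value theorem and the explicit $\alpha$-dependence of the inelastic pre-collisional velocities give
\[
\left|f_\eta('v_\alpha) - f_\eta('v_{\alpha'})\right| \leq C\,|\alpha-\alpha'|\,|u|\,\|\nabla f_\eta\|_{L^\infty} \leq C\,|\alpha-\alpha'|\,|u|\,\eta^{-d-1}\,\|f\|_{L^1},
\]
up to the usual weight corrections (an $e^{C\eta^s}$ factor from exponential reweighting). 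For the remainder $f - f_\eta$ I would use the usual mollification estimate $\|f - f_\eta\|_{L^1(m^{-1})} \lesssim \eta\,\|f\|_{W^{1,1}(m^{-1})}$, again with a weight correction; these can be integrated against $|u|\,g_*\,m^{-1}(v)$ using the Povzner-type identity that $m^{-1}(v)\,m^{-1}(v_*) \lesssim m^{-1}('v)\,m^{-1}('v_*)$ (true for stretched exponential weights of order $s \in (0,1)$) to bring the weights back onto $f$ and $g$. This yields a bound of the form
\[
\|\Q_\alpha^+(g,f) - \Q_{\alpha'}^+(g,f)\|_{L^1(m^{-1})} \leq C\,e^{c\eta^s}\left(\eta + |\alpha-\alpha'|\,\eta^{-d-1}\right)\|f\|_{W^{1,1}_1(m^{-1})}\|g\|_{L^1_1(m^{-1})}.
\]

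Optimizing in $\eta$ is then the final step: balancing $\eta$ against $|\alpha-\alpha'|\,\eta^{-d-1}$ gives an exponent close to $1/(d+2)$, but the exponential weight $e^{c\eta^s}$ deteriorates this rate because it forces $\eta$ to stay modest, producing (after sharpening via the weight-transfer inequality, which costs a power $\eta^{-3s}$ or similar when one tracks the precise Povzner constants) the advertised Hölder exponent $1/(3+4s)$. I expect the delicate point to be exactly this last balancing: carefully tracking how the exponential weight $m^{-1}$ is distorted both by convolution at scale $\eta$ (via $e^{c\eta^s}$) and by the inelastic change of variables (via the Povzner-type bound), so that the constants combine into a single power of $|\alpha-\alpha'|$ with the correct exponent. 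The symmetric estimate, with $g$ and $f$ swapped, is then obtained identically since the roles of the two distributions in the gain operator are symmetric once one places the $W^{1,1}_1$-regularity on the appropriate factor.
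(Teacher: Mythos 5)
The paper itself contains no proof of this statement: Proposition \ref{propHoldContOp} is imported verbatim as Proposition 3.2 of \cite{mischler:20091} and used as a black box in the appendix, so your attempt can only be measured against the original argument there (which is likewise a regularization-plus-interpolation argument, so your overall skeleton is in the right family). Judged on its own terms, however, your proposal has two genuine gaps.

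First, the central estimate does not close. Once you bound $\left|f_\eta({}'v_\alpha)-f_\eta({}'v_{\alpha'})\right|\le C\,|\alpha-\alpha'|\,|u|\,\|\nabla f_\eta\|_{L^\infty}$ with the \emph{unweighted} $L^\infty$ norm, every trace of decay of $f$ in velocity is gone, and the remaining integral $\int |u|^2\,|g_*|\,m^{-1}(v)\,dv\,dv_*\,d\sigma$ diverges in $v$, since $m^{-1}(v)=e^{a|v|^s}\to\infty$ and nothing is left to compensate it. The weight-transfer inequality you invoke cannot repair this: it moves weight from $v$ onto the pre-collisional velocities, i.e.\ onto the arguments of $f$ and $g$, but $f$ has already been erased into a constant. (Moreover, the two-sided form you state, $m^{-1}(v)\,m^{-1}(v_*)\lesssim m^{-1}({}'v)\,m^{-1}({}'v_*)$, is false for $s<1$ --- take $|v|=|v_*|=R$ with $|{}'v|^2+|{}'v_*|^2=2R^2$ concentrated on one velocity, so that $|{}'v|^s+|{}'v_*|^s=2^{s/2}R^s<2R^s$; only the one-sided version $m^{-1}(v)\le m^{-1}({}'v)\,m^{-1}({}'v_*)$ holds, which is fortunately the one actually needed for the gain term.) Repairing this requires either a weighted sup $\|m^{-1}\nabla f_\eta\|_{L^\infty}$ together with a way to trade $m({}'v_\theta)$ for decay in $v$ --- which fails pointwise, since only the sum $|{}'v_\theta|^2+|{}'v_{*,\theta}|^2$ is bounded below by $|v|^2+|v_*|^2$ --- or an additional truncation of the velocities at a radius $R$, paying $e^{-aR^s}$ for the tail. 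Second, your optimization does not produce the stated exponent, and your explanation of the discrepancy does not hold up: the reweighting factor $e^{c\eta^s}$ tends to $1$ as $\eta\to 0$, so it is harmless and cannot ``force $\eta$ to stay modest'' or degrade the rate at all. Your own balancing yields $1/(d+2)$, which for $d=3$ and $s<1/2$ is \emph{smaller} than $1/(3+4s)$, so the true rate is better, not worse, than what your mechanism predicts; and the fact that the true exponent is dimension-independent and depends on $s$ strongly indicates that the correct bookkeeping involves the extra velocity-truncation parameter $R$ (whose error $e^{-aR^s}$ is where $s$ actually enters), not the two-parameter balancing you describe. The closing remark that the two estimates are ``identical by symmetry'' also glosses over the fact that $\Q_\alpha^+(g,f)$ and $\Q_\alpha^+(f,g)$ place the regularity on different collisional arguments (${}'v$ versus ${}'v_*$), which is precisely why the proposition states two separate inequalities; the adaptation is routine but not literally a symmetry.
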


	    We also need to estimate the smoothness, the tail behavior and the pointwise lower bound (uniformly with respect to the restitution coefficient $\alpha$) of the equilibrium profiles $F_\alpha$ solutions to \eqref{eqEquilib}. We have the following result.
	    
	    \begin{proposition}[Propositions 2.1 and 2.3 of \cite{mischler:20092}]
	      \label{propRegSelfSimProf}
	      Let us fix $\alpha_0 \in (0,1)$. There exist some positive constants $a_1, a_2, a_3, a_4$ (independent of $\alpha$) and, for any $k \in \NN$ a positive constant $C_k$ such that for all $\alpha \in [\alpha_0, 1)$
	      \begin{gather*}
	        \|F_\alpha\|_{L^1\left (e^{a_1}|v|\right )} \leq a_2, \quad \|F_\alpha\|_{H^k(\RR^d)} \leq C_k,
 \\
	        F_\alpha(v) \geq a_3 \, e^{-a_4|v|^8}, \quad \forall \, v \in \RR^d.
	      \end{gather*}
	    \end{proposition}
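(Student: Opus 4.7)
The plan is to read off all three estimates from the stationary equation $\mathcal{Q}_\alpha(F_\alpha,F_\alpha) + (1-\alpha)\Delta_v F_\alpha = 0$ together with the balance identity $(1+\alpha)D(F_\alpha,F_\alpha) = 2d$, which pins down the cubic moment of $F_\alpha$ uniformly in $\alpha \in [\alpha_0, 1)$ and, by moment interpolation, all lower-order moments. The running difficulty throughout is to keep the constants uniform as $\alpha \to 1$, since the diffusion coefficient $(1-\alpha)$ degenerates there; I would split the range $[\alpha_0,1)$ into a genuinely inelastic regime $[\alpha_0, 1-\eta]$ (for some small $\eta$ to be fixed) and a near-elastic regime $[1-\eta,1)$ treated by perturbation of the smooth Maxwellian $F_1$.

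For the exponential moment estimate, I would test the stationary equation against the weight $\varphi(v) := e^{a_1 |v|}$ and invoke Povzner-type cancellation inequalities for the hard-sphere gain operator. For $a_1$ small enough (depending only on $\alpha_0$), the loss term $F_\alpha L(F_\alpha)\varphi$ dominates the gain contribution and produces a coercive control on $\int F_\alpha \langle v\rangle \varphi\,dv$. The diffusion, integrated by parts twice against $\varphi$, contributes a term of order $(1-\alpha) a_1^2 \int F_\alpha \varphi\,dv$ plus lower-order correctors from $\Delta_v|v|$, which is absorbed by the coercive term once $a_1$ is taken small. A continuity-in-$a_1$ bootstrap (starting from the already known polynomial moments) then yields a finite constant $a_2$ independent of $\alpha$.

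For the Sobolev estimates, the key input is the gain-of-derivative property of $\mathcal{Q}_\alpha^+$ (in the spirit of Lions and Bouchut--Desvillettes). Writing $(1-\alpha)\Delta_v F_\alpha = -\mathcal{Q}_\alpha^+(F_\alpha,F_\alpha) + F_\alpha L(F_\alpha)$ and iterating, the smoothing of $\mathcal{Q}_\alpha^+$ together with the polynomial moment bounds improves the regularity of the right-hand side at each step; elliptic regularity (valid uniformly as long as $1-\alpha \geq \eta$) then lifts this to a derivative gain on $F_\alpha$, and one bootstraps up to $H^k$. In the near-elastic regime $\alpha \in [1-\eta,1)$, the coefficient in front of the Laplacian degenerates, but one can instead write $F_\alpha = F_1 + (F_\alpha - F_1)$ and use the quantitative convergence $F_\alpha \to F_1$ in $H^k$ obtained by inverting the linearized operator $\mathcal{L}_1$ on the perturbation. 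Gluing the two regimes yields $H^k$ bounds uniform in $\alpha \in [\alpha_0,1)$.

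For the pointwise lower bound $F_\alpha \geq a_3 e^{-a_4|v|^8}$, I would adapt the iterative Carleman--Pulvirenti--Wennberg argument to the stationary setting. A uniform lower bound on a fixed ball is available from the non-triviality of the mass and the uniform continuity (a consequence of the $H^k$ bounds just obtained together with Sobolev embedding). Successive iterations of the equation $F_\alpha = L(F_\alpha)^{-1}(\mathcal{Q}_\alpha^+(F_\alpha,F_\alpha) + (1-\alpha)\Delta_v F_\alpha)$ propagate the positivity to larger velocity regions, at the price of a super-exponential loss in the bound at each step; tracking how the ball grows (essentially linearly, because the hard-sphere kernel $|v-v_*|$ dominates) and how the constant shrinks at each iteration gives the exponent $8$. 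The main obstacle, and the most delicate part of the whole proof, is keeping the iteration uniform in $\alpha$: this requires the uniform H\"older continuity of $\mathcal{Q}_\alpha^+$ in $\alpha$ recalled in Proposition~\ref{propHoldContOp}, the uniform collision frequency bounds \eqref{nuCrois}, and a careful control of the contribution of $(1-\alpha)\Delta_v F_\alpha$ (which has no definite sign) inside the iteration.
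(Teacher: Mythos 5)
You should first be aware that the paper offers no proof of this statement: Proposition \ref{propRegSelfSimProf} is quoted verbatim from Propositions 2.1 and 2.3 of \cite{mischler:20092} and sits in the appendix as an imported black box, so there is nothing internal to the paper to compare your argument against. Judged on its own terms, your sketch identifies the right ingredients (Povzner-type inequalities for the exponential moment, the smoothing of $\mathcal Q_\alpha^+$ for the Sobolev bounds, a Pulvirenti--Wennberg-type iteration for the pointwise lower bound), but two steps contain genuine gaps.

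First, your treatment of the $H^k$ bounds in the near-elastic regime is circular. You propose to write $F_\alpha = F_1 + (F_\alpha - F_1)$ and to invoke ``the quantitative convergence $F_\alpha \to F_1$ in $H^k$''; but that convergence is not an available input --- the paper only records convergence in $L^1_2$ (Proposition \ref{propCvMaxwProfil}), and in the source reference the strong convergence in Sobolev norms is \emph{deduced} from the uniform $H^k$ bounds by interpolation, not the other way around. The splitting of $[\alpha_0,1)$ into two regimes is in fact unnecessary and obscures the actual mechanism: since $-(1-\alpha)\Delta_v$ is a nonnegative, dissipative operator, the resolvent of $\nu_\alpha - (1-\alpha)\Delta_v$ is bounded on every $H^k_q$ \emph{uniformly} in $\alpha$, so one writes $F_\alpha = \left(\nu_\alpha - (1-\alpha)\Delta_v\right)^{-1}\mathcal Q_\alpha^+(F_\alpha,F_\alpha)$ and lets the gain term supply all the smoothing; the diffusion never needs to be used for regularity, it only needs not to hurt, and that is precisely what uniformity in $\alpha$ requires. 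Second, the exponent $8$ in the lower bound is asserted rather than derived. The entire content of the spreading-of-positivity iteration lies in the bookkeeping between the growth of the ball and the decay of the constant at each step; the same bookkeeping in the elastic hard-sphere case produces a Gaussian lower bound (exponent $2$), so the jump to $|v|^8$ must be traced to a quantified degradation coming from the inelastic gain term and the sign-indefinite contribution of $(1-\alpha)\Delta_v F_\alpha$, which your sketch does not attempt. As written, your last paragraph would establish at best \emph{some} stretched-exponential lower bound with an unidentified exponent, not the stated one.
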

	    
	    Moreover, these profiles converge in $L^1_2$ towards the elastic Maxwellian $F_1$, with an explicit rate:
	    \begin{proposition}[Proposition 3.1 of \cite{mischler:20092}]
	      \label{propCvMaxwProfil}
	      For any $\ve > 0$, there exists $C_\ve$ such that
	      \begin{equation*}
	        \|F_\alpha - F_1\|_{L^1_2} \leq C_\ve (1 - \alpha)^{\frac{1}{2+\ve}}.
	      \end{equation*}
	    \end{proposition}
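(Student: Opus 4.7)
The plan is to linearize around the elastic Maxwellian $F_1$ and to exploit the spectral gap of the elastic linearized operator $\mathcal{L}_1$ on the complement of its $(d+2)$-dimensional kernel $N_1$. Setting $h_\alpha := F_\alpha - F_1$ and subtracting the stationary equations \eqref{eqEquilib} for restitution coefficients $\alpha$ and $1$, the bilinearity of $\mathcal{Q}_\alpha$ together with the splitting $\mathcal{Q}_\alpha = \mathcal{Q}_1 + (\mathcal{Q}_\alpha - \mathcal{Q}_1)$ yields
\begin{equation*}
\mathcal{L}_1 h_\alpha = -[\mathcal{Q}_\alpha-\mathcal{Q}_1](F_1,F_1) - [\mathcal{Q}_\alpha-\mathcal{Q}_1](h_\alpha,F_\alpha) - [\mathcal{Q}_\alpha-\mathcal{Q}_1](F_1,h_\alpha) - \mathcal{Q}_\alpha(h_\alpha,h_\alpha) - (1-\alpha)\,\Delta_v F_\alpha.
\end{equation*}
The reason this is useful is that $\mathcal{L}_1$ is well understood in $L^1(m^{-1})$ thanks to \cite{Mouhot:2006}, and the right-hand side is built out of manifestly small quantities together with terms that are either quadratic in $h_\alpha$ or contain a small factor in $\alpha-1$.

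The next step is to estimate each remainder. By Proposition \ref{propHoldContOp} the delinearization term $[\mathcal{Q}_\alpha-\mathcal{Q}_1](F_1,F_1)$ is of size $\mathcal{O}((1-\alpha)^{1/(3+4s)})$ in $L^1(m^{-1})$, at the cost of one polynomial weight which is absorbable since $F_1$ is a fixed Gaussian with all moments. The diffusion $(1-\alpha)\Delta_v F_\alpha$ is of order $1-\alpha$ in $L^1(m^{-1})$ by the uniform Sobolev estimates of Proposition \ref{propRegSelfSimProf}. The cross terms $[\mathcal{Q}_\alpha-\mathcal{Q}_1](h_\alpha,F_\alpha)$ and $[\mathcal{Q}_\alpha-\mathcal{Q}_1](F_1,h_\alpha)$ give a multiplicative small factor $(1-\alpha)^{1/(3+4s)}$ times $\|h_\alpha\|_{W^{1,1}_1(m^{-1})}$, absorbable in a bootstrap. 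The quadratic $\mathcal{Q}_\alpha(h_\alpha,h_\alpha)$ is also absorbable once a rough bound on $h_\alpha$ is available.

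Then I invert $\mathcal{L}_1$ using its spectral gap on $N_1^\perp$. Decompose $h_\alpha = \pi_{N_1} h_\alpha + h_\alpha^\perp$. The orthogonal part satisfies $\|h_\alpha^\perp\|_{L^1_2} \lesssim \|\mathcal{L}_1 h_\alpha\|_{L^1_2}$, hence is bounded by the previous estimates. The kernel projection is pinned down by macroscopic constraints: since both $F_\alpha$ and $F_1$ have unit mass and zero momentum, the $F_1$ and $v_iF_1$ components of $\pi_{N_1} h_\alpha$ vanish, so only an energy (temperature) component remains. That energy component is controlled by the balance equation \eqref{defBalanceEq}: $(1+\alpha)\,D(F_\alpha,F_\alpha)=2d$ compared with $D(F_1,F_1)=d$, together with the continuity of the cubic form $D$ under the uniform $L^1_3$ bounds of Proposition \ref{propRegSelfSimProf}, gives $|T_\alpha-\bar T_1|=\mathcal{O}(1-\alpha)$, from which the desired $L^1_2$ bound on $\pi_{N_1}h_\alpha$ follows.

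The principal obstacle will be to extract the claimed exponent $1/(2+\varepsilon)$ rather than the naive $1/(3+4s)$ coming from the Hölder continuity. Since $s\in(0,1)$, the raw exponent is worse than $1/3$, whereas the statement requires essentially $1/2$. Bridging this gap requires a refined interpolation: trade the polynomial-weight loss in Proposition \ref{propHoldContOp} against the uniform exponential tails of $F_\alpha$ from Proposition \ref{propRegSelfSimProf}, re-doing the continuity estimate in a weighted norm chosen to match the integrability of $F_\alpha$, and then closing the bootstrap for $\|h_\alpha\|_{L^1_q}$ across several weights. The $\varepsilon$ loss in the final exponent is precisely the artefact of this interpolation between polynomial weights and exponential moments.
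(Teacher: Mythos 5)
First, a point of comparison: the paper itself offers no proof of this statement. Proposition \ref{propCvMaxwProfil} is quoted verbatim from \cite{mischler:20092} as part of the appendix toolbox, so there is no internal proof to measure you against; your argument can only be judged against what the claim requires. Your skeleton --- write $\mathcal{L}_1 h_\alpha$ as a sum of small remainders, invert $\mathcal{L}_1$ on the complement of $N_1$ via its $L^1(m^{-1})$ spectral gap, and pin down the kernel component using the mass/momentum normalizations and the balance equation \eqref{defBalanceEq} --- is sound in outline. It does, however, also need a preliminary \emph{qualitative} convergence $\|h_\alpha\|\to 0$ (obtained by compactness from the uniform bounds of Proposition \ref{propRegSelfSimProf} plus uniqueness of the elastic equilibrium) before you may ``absorb'' the cross and quadratic terms, and before the $O(1-\alpha)$ defect in the balance equation can be converted into a temperature estimate by linearizing the cubic functional $D$ around $F_1$; you use this implicitly without stating it.

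The genuine gap is the exponent, and you have correctly located it but not resolved it. Proposition \ref{propHoldContOp} only yields $(1-\alpha)^{1/(3+4s)}$ with $s\in(0,1)$, hence an exponent strictly below $1/3$, whereas the statement requires essentially $1/2$. Your proposed repair --- trading the polynomial-weight loss against the exponential tails of $F_\alpha$ --- cannot work, because the exponent $1/(3+4s)$ is not produced by a weight loss at all: it is the H\"older modulus in $\alpha$ of the strong-form gain operator measured in $L^1(m^{-1})$, and no re-weighting of the norm changes that modulus. The mechanism that actually produces $1/(2+\ve)$ is different in kind: one first proves an $O(1-\alpha)$, i.e.\ Lipschitz-in-$\alpha$, bound on $F_\alpha - F_1$ in a \emph{weak} (negative Sobolev, or dual) norm, using the weak formulation \eqref{defBoltzweak} in which the $\alpha$-dependence enters only through $\psi(v')+\psi(v_*')$ and is genuinely Lipschitz in $\alpha$ for smooth test functions $\psi$; one then interpolates this first-order weak estimate against the uniform $H^k$ bounds of Proposition \ref{propRegSelfSimProf}. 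The $2$ in $1/(2+\ve)$ is precisely the interpolation exponent between those two pieces of information, and the $\ve$ is the loss incurred in passing from a weighted $L^2$ interpolation space to $L^1_2$ and controlling the tails. Without this (or an equivalent) step, your argument proves convergence only at the rate $(1-\alpha)^{1/(3+4s)}$, not the stated one.
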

	    			
			We now define for $\zeta \in \CC$ and $\delta > 0$ the operators
	    \begin{equation}
	      \label{defABops}
	      A_\delta := \mathcal L_{1, \delta}^+ -  \mathcal L^* \quad \text{ and } \quad B_{\alpha, \, \delta} (\zeta) := \LL^{\nu_1} + \mathcal I_\alpha + \zeta - \left ( \mathcal L_{1}^+ - \mathcal L_{1, \delta}^+ \right ),
	    \end{equation}
	    where $\mathcal I_\alpha := \LL_1 - \LL_\alpha$ is the difference between the elastic and inelastic linearized operators.
	    We can then write the problem of computing the inverse of resolvent operator of $\LL_\alpha$ as the perturbation equation
	    \begin{equation*}
	      \LL_{\alpha } - \zeta = A_\delta - B_{\alpha, \, \delta} \left (\zeta \right ).
	    \end{equation*}
	    We  state a result of convergence of the linearized granular gases operator towards the linearized elastic operator (which is a consequence of Proposition \ref{propHoldContOp}), as well as estimates on the operator $B_{\alpha, \, \delta}$.
	   
	    \begin{lemma}[Lemmas 5.9 of \cite{mischler:20091} and 5.2 of \cite{mischler:20092}]
	      \label{lemVracProp}
	      For any $k,q \in \NN$ and any exponential weight function $m$, the following properties hold:
	      \begin{enumerate}[leftmargin=*]
	        \item There exist a constructive $\alpha_0 \in (0,1]$ and some nonnegative constant $C = C(k,q,m)$ such that for any $\alpha \in (\alpha_0, 1]$,
		        \begin{gather*}
		          \|\LL_\alpha \|_{W^{k+2,1}_{q+1}(m^{-1}) \to W^{k,1}_{q}(m^{-1})} \leq C, \\
		          \left \|\LL_\alpha - \LL_1 \right \|_{W^{3,1}_{3}(m^{-1}) \to L^1(m^{-1})} \leq C \, (1 - \alpha).
		        \end{gather*}
	        \item For any $\delta >0$, the operator $A_\delta: L^1 \to W^{\infty,1}_\infty\left (m^{-1}\right )$ is a bounded linear operator (more precisely, it maps function $L^1$ into $ \mathcal{C}^\infty$ functions with compact support).
	        \item There exists some constants $\delta^* >0$ and $\alpha_1 \in (\alpha_0, 1)$ such that for any $\zeta \in \Delta_{-\nu_0}$, $\delta < \delta_*$ and $\alpha \in [\alpha_1,1]$ the operator \[ B_{\alpha, \, \delta} (\zeta) : W^{k+2,1}_{q+1}(m^{-1}) \to W^{k,1}_{q}(m^{-1}) \] is invertible. Moreover, its inverse operator satisfies
		        \begin{gather*}
		          \left \| B_{\alpha, \, \delta} (\zeta)^{-1} \right\|_{W^{k,1}_{q}(m^{-1}) \to W^{k,1}_{q}(m^{-1})} \leq \frac{C_1}{|\nu_0 - \Re e \, \zeta|}, \\
		          \left \| B_{\alpha, \, \delta} (\zeta)^{-1} \right\|_{W^{k,1}_{q}(m^{-1}) \to W^{k+2,1}_{q+1}(m^{-1})} \leq \frac{C_2}{|\nu_0 - \zeta|}
		        \end{gather*}
		        for some explicit constants $C_1, C_2$ depending on $k, q, \delta^*, \alpha_1$.
	      \end{enumerate}
	    \end{lemma}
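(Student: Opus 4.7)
The plan is to reduce each of the three claims to tools already gathered in this appendix. For claim (1), I would start from the explicit splitting
\[
\LL_\alpha h = 2\,\Q_\alpha^+(F_\alpha, h) - F_\alpha\, L(h) - \nu_\alpha\, h + (1-\alpha)\Delta_v h,
\]
apply Leibniz-type bounds for the gain operator in weighted Sobolev spaces, and use the uniform smoothness and exponential integrability of $F_\alpha$ given by Proposition \ref{propRegSelfSimProf}. Each term costs at most one weight (from the collision frequency growth in \eqref{nuCrois}) and, via the Laplacian, two derivatives; this is exactly the shift from $W^{k+2,1}_{q+1}(m^{-1})$ to $W^{k,1}_{q}(m^{-1})$. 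For the difference $\LL_\alpha - \LL_1$, I would add and subtract $\Q_1^+(F_\alpha,\cdot)$ and $\Q_1^+(\cdot,F_\alpha)$ and apply the Hölder continuity of the gain term in $\alpha$ (Proposition \ref{propHoldContOp}) together with the convergence rate $\|F_\alpha - F_1\|_{L^1_2} \lesssim (1-\alpha)^{1/(2+\varepsilon)}$ from Proposition \ref{propCvMaxwProfil}; the extra regularity $W^{3,1}_3$ in the domain absorbs the fractional exponent $1/(3+4s)$ and the extra weight lost to the linear factor $(1-\alpha)$.

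Claim (2) is essentially a regularization statement inherited from \cite{Mouhot:2006}. The truncated gain operator $\mathcal L^+_{1,\delta}$ is constructed by mollifying and truncating the relative-velocity kernel of $\Q^+_1(F_1,\cdot)$, so its output is automatically $\mathcal C^\infty$ with velocity support inside a fixed compact whose size depends only on $\delta$ and on the exponential tail of $F_1$ controlled by Proposition \ref{propRegSelfSimProf}. The convolution-like part $\mathcal L^*$ inherits smoothness from the Schwartz profile $F_1$. Together these facts furnish, for each $k,q$, a bound on $A_\delta : L^1 \to W^{k,1}_q(m^{-1})$ that is uniform in $\delta$ fixed.

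The heart of the lemma, and the step I expect to be the main obstacle, is claim (3). I would rewrite
\[
B_{\alpha,\delta}(\zeta) = (\nu_1+\zeta)\,\id + \mathcal I_\alpha - (\mathcal L_1^+ - \mathcal L^+_{1,\delta}),
\]
factor the principal multiplicative part on the left,
\[
B_{\alpha,\delta}(\zeta) = (\nu_1+\zeta)\bigl[\id + (\nu_1+\zeta)^{-1}\bigl(\mathcal I_\alpha - (\mathcal L_1^+ - \mathcal L^+_{1,\delta})\bigr)\bigr],
\]
and invert the bracketed term by a Neumann series once its operator norm is strictly less than one. The assumption $\Re e\,\zeta > -\nu_0$, combined with the lower bound $\nu_1(v) \geq \nu_{0,1}(1+|v|)$ from \eqref{nuCrois}, gives $|\nu_1(v)+\zeta| \geq |\nu_0 - \Re e\,\zeta|$ uniformly in $v$, which both controls the multiplicative inverse and produces the denominators appearing in the two estimates. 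The bracketed remainder can be made small uniformly in $\zeta$ by choosing $\delta \leq \delta^*$ (invoking Proposition \ref{propApproxL1}, which gives $\|\mathcal L_1^+ - \mathcal L^+_{1,\delta}\| \leq \varepsilon(\delta)$ with a one-weight loss) and $\alpha \geq \alpha_1$ (invoking Proposition \ref{propHoldContOp}, which gives $\|\mathcal I_\alpha\| \leq C(1-\alpha)^{1/(3+4s)}$). The sharper bound with a gain of two derivatives and one weight follows because each derivative applied to the multiplier $(\nu_1+\zeta)^{-1}$ loses one polynomial weight, so that inverting against it trades two derivatives in the target for one extra polynomial weight in the source.

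The main difficulty is threading this Neumann argument through the whole scale of weighted Sobolev spaces $W^{k,1}_q(m^{-1})$ with the exact bookkeeping $(k,q) \mapsto (k+2,q+1)$ required by the statement. The cited propositions all come with a one-weight loss, and the one-weight gain contained in $(\nu_1+\zeta)^{-1}$ must compensate it \emph{exactly} so that the contraction threshold of the Neumann series can be chosen independently of $k$ and $q$; this in turn allows $\delta^*$ and $\alpha_1$ to be fixed once and for all. Once this bookkeeping is in place, choosing $\delta$ so small that $\varepsilon(\delta) < 1/2$ and $\alpha$ so close to $1$ that $C(1-\alpha)^{1/(3+4s)} < 1/2$ closes the argument.
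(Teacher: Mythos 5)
This lemma is not proved in the paper at all: it is imported verbatim from Lemma~5.9 of \cite{mischler:20091} and Lemma~5.2 of \cite{mischler:20092}, so there is no in-paper argument to measure your sketch against. Judged on its own terms, your outline follows the broad strategy of those references for claims (1) and (2), but two steps in your plan would not go through as written.

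The serious problem is in claim (3). Your factorization
$B_{\alpha,\delta}(\zeta) = (\nu_1+\zeta)\bigl[\id + (\nu_1+\zeta)^{-1}\bigl(\mathcal I_\alpha - (\mathcal L_1^+ - \mathcal L^+_{1,\delta})\bigr)\bigr]$
puts the term $-(1-\alpha)\Delta_v$, hidden inside $\mathcal I_\alpha = \LL_1 - \LL_\alpha$, into the bracket you intend to invert by a Neumann series. But $(\nu_1+\zeta)^{-1}(1-\alpha)\Delta_v$ is an unbounded operator on each $W^{k,1}_q(m^{-1})$; its operator norm on a fixed space is infinite for every $\alpha<1$, so no choice of $\alpha_1$ or $\delta^*$ makes the bracket a contraction and the series does not converge. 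The principal part has to be the full Schr\"odinger operator $\nu_1 + \zeta - (1-\alpha)\Delta_v$, inverted by dissipativity/semigroup arguments (exactly as the paper does for $D_{(\alpha,\gamma)}$ in the proof of Proposition \ref{propLocalisEssSpect}), with only the collisional differences --- small by Propositions \ref{propApproxL1} and \ref{propHoldContOp}, at the price of a loss of derivatives and weights --- left in the perturbation. Relatedly, the second resolvent bound, with its gain of two derivatives and one weight, cannot be extracted from your multiplicative principal part: dividing by $\nu_1+\zeta$ gains a polynomial weight but no derivatives, so the $W^{k+2,1}_{q+1}$ regularity of $B_{\alpha,\delta}(\zeta)^{-1}g$ must come from the elliptic term you have relegated to the remainder.

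There is also a gap in claim (1): the Lipschitz rate $\|\LL_\alpha - \LL_1\|_{W^{3,1}_3(m^{-1}) \to L^1(m^{-1})} \le C(1-\alpha)$ does not follow from the tools you invoke. Proposition \ref{propHoldContOp} and the profile convergence of Proposition \ref{propCvMaxwProfil} only give the H\"older rates $(1-\alpha)^{1/(3+4s)}$ and $(1-\alpha)^{1/(2+\ve)}$ (and even the last proposition of the appendix keeps the H\"older factor $\ve(1-\alpha)$). Saying that ``the extra regularity $W^{3,1}_3$ absorbs the fractional exponent'' is an assertion, not an argument; upgrading H\"older to Lipschitz requires estimating the $\alpha$-derivative of the collision kernel directly, each such derivative costing one $v$-derivative and one weight on the argument --- which is precisely why the domain is $W^{3,1}_3$. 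This mechanism is absent from your sketch, and it is the actual content of the cited lemmas.
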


	    As a consequence of these results, we also have the following proposition.
	    \begin{proposition}[Proposition 3.8 of \cite{mischler:20091}]
	      For any $k,q \in \NN$, any exponential weight function $m$, and  any $\alpha \in (\alpha_0, 1]$,
	      \[ \left \|\LL_\alpha^+ - \LL_1^+ \right \|_{W^{k,1}_{q}(m^{-1}) \to W^{k,1}_{q+1}(m^{-1})} \leq \ve \, (1 - \alpha) \]
	      where $\ve$ has been defined in Proposition \ref{propHoldContOp}.
	    
	    \end{proposition}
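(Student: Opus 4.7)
\medskip
\noindent\textbf{Proof proposal.} The plan is to reduce the claim to Proposition~\ref{propHoldContOp} by splitting the difference $\LL_\alpha^+-\LL_1^+$ into two pieces: one where only the restitution coefficient varies and one where only the equilibrium profile varies. Writing the linearized gain operator symmetrically as
\[ \LL_\alpha^+(h) \, = \, \Q_\alpha^+(F_\alpha,h) + \Q_\alpha^+(h,F_\alpha), \]
I would introduce the elastic profile in the first slot and elasticity in the operator, obtaining
\[ \LL_\alpha^+(h) - \LL_1^+(h) = \big[\Q_\alpha^+(F_\alpha-F_1, h) + \Q_\alpha^+(h, F_\alpha-F_1)\big] + \big[(\Q_\alpha^+ - \Q_1^+)(F_1,h) + (\Q_\alpha^+ - \Q_1^+)(h,F_1)\big]. \]
The second bracket is immediately controlled by Proposition~\ref{propHoldContOp}: it is bounded in $L^1(m^{-1})$ by $\ve(1-\alpha)\|F_1\|_{W^{1,1}_1(m^{-1})}\|h\|_{L^1_1(m^{-1})}$, which costs exactly one integration weight (the $L^1_1$ norm of $h$) and provides the claimed $\ve(1-\alpha)$ factor. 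The Schwartz-class regularity of $F_1$ ensures the $W^{1,1}_1(m^{-1})$ norm is a universal constant.

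\medskip

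For the first bracket, I would use the standard bilinear boundedness of $\Q_\alpha^+$ in weighted $L^1$ spaces (with the $|v-v_*|$ kernel costing one weight) to estimate both terms by a constant times $\|F_\alpha - F_1\|_{W^{1,1}_1(m^{-1})} \|h\|_{L^1_1(m^{-1})}$. The convergence $F_\alpha \to F_1$ from Proposition~\ref{propCvMaxwProfil} gives a rate $(1-\alpha)^{1/(2+\delta)}$ in $L^1_2$, and combined with the uniform smoothness and tail bounds of Proposition~\ref{propRegSelfSimProf} (upgraded to an exponentially weighted $W^{1,1}_1$ bound by interpolation with the Schwartz-class tails), this can be absorbed into the same $\ve(1-\alpha)$ factor up to a constant, since $(1-\alpha)^{1/(2+\delta)} \leq C (1-\alpha)^{1/(3+4s)}=\ve(1-\alpha)$ for $\alpha$ close to $1$.

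\medskip

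To extend from $L^1(m^{-1})$ to $W^{k,1}_q(m^{-1})$, I would differentiate $\LL_\alpha^+ - \LL_1^+$ up to order $k$ and redistribute derivatives between the two arguments of $\Q_\alpha^+$ via the Leibniz rule for the collision gain (which is standard: each derivative lands either on the equilibrium profile or on $h$, with no singular term appearing in the cut-off setting thanks to \eqref{hypCrossSec1}--\eqref{hypCrossSec2}). Each resulting term is again of one of the two types treated above, with $F_\alpha, F_1$ replaced by some derivative $\partial^j F_\alpha,\partial^j F_1$, whose uniform weighted Sobolev bounds and convergence rate are again guaranteed by Propositions~\ref{propRegSelfSimProf} and~\ref{propCvMaxwProfil}. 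The polynomial weight $\langle v\rangle^q$ is distributed across the two arguments via $\langle v\rangle^q \lesssim \langle v'\rangle^q \langle v_*'\rangle^q$ in the post-collisional parametrization, so polynomial moments of $h$ and of $F_\alpha-F_1$ (or of the $\alpha$-difference of the gain) are paired appropriately.

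\medskip

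The principal obstacle will be keeping track of the loss of exactly one weight (the target space is $W^{k,1}_{q+1}(m^{-1})$, not $W^{k,1}_q(m^{-1})$) uniformly across the Sobolev scale: one must verify that, in the decomposition above, the $L^1_1$-type norm of $h$ appearing in Proposition~\ref{propHoldContOp} really accounts for the full discrepancy, and that redistributing the $\langle v\rangle^{q+1}$ weight against $F_1$ and its derivatives (whose rapid decay absorbs any extra polynomial weight) does not introduce a larger loss. Once this bookkeeping is handled, the estimate follows by summing the $\mathcal{O}(k)$ resulting terms, each of which is $\ve(1-\alpha)$ times $\|h\|_{W^{k,1}_{q+1}(m^{-1})}$.
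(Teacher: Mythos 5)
The paper offers no proof of this proposition: it is quoted verbatim from \cite{mischler:20091} and presented in the appendix as a consequence of the results recalled there, so the only benchmark is the strategy of the cited source. Your telescoping decomposition
\[ \LL_\alpha^+ - \LL_1^+ = \bigl[\Q_\alpha^+(F_\alpha-F_1,\cdot)+\Q_\alpha^+(\cdot,F_\alpha-F_1)\bigr]+\bigl[(\Q_\alpha^+-\Q_1^+)(F_1,\cdot)+(\Q_\alpha^+-\Q_1^+)(\cdot,F_1)\bigr] \]
is exactly that standard route, and your use of Proposition \ref{propHoldContOp} for the second bracket is correct: in both inequalities the $W^{1,1}_1(m^{-1})$ norm may be placed on the equilibrium profile, so $h$ only pays an $L^1_1(m^{-1})$ norm and no derivative.

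Two genuine gaps remain. \emph{(a) The weight bookkeeping goes the wrong way for the statement as literally written.} The operator norm is from $W^{k,1}_q(m^{-1})$ \emph{into} $W^{k,1}_{q+1}(m^{-1})$, so the output must carry one more polynomial weight than the input. Your argument --- and any argument based on distributing $\langle v\rangle^{q+1}\lesssim \langle v'\rangle^{q+1}\langle v_*'\rangle^{q+1}$ against the hard-sphere kernel $|v-v_*|$ --- yields a bound of the form $\ve(1-\alpha)\,\|h\|_{W^{k,1}_{q+1}(m^{-1})}$, i.e.\ a \emph{loss} of weight on $h$, which proves boundedness in the opposite direction. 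The extra weight sits on $h$, not on the profile, so the rapid decay of $F_1$ cannot absorb it as you suggest. This is consistent with Proposition \ref{propHoldContOp} and with item (1) of Lemma \ref{lemVracProp}, both of which lose weight from domain to codomain, and strongly indicates the indices in the statement are transposed; your proof addresses the transposed statement, not the literal one. \emph{(b) The rate for the profile-difference bracket is not actually established.} You need $F_\alpha-F_1$ small in an exponentially weighted $W^{k+1,1}(m^{-1})$-type norm, but Proposition \ref{propCvMaxwProfil} only gives a rate $(1-\alpha)^{1/(2+\delta)}$ in $L^1_2$; interpolating against the uniform bounds of Proposition \ref{propRegSelfSimProf} produces a rate with a strictly smaller exponent, and you have not verified that this exponent still dominates $1/(3+4s)$, i.e.\ that this bracket is $O(\ve(1-\alpha))$ with the \emph{same} $\ve$ as in Proposition \ref{propHoldContOp}. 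Both points are repairable (the second by weakening the exponent in $\ve$), but as written the argument does not close.
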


	\end{appendix}

	\bibliographystyle{acm}
	\bibliography{/home/Tom/Documents/These/includes/biblio}
	
\end{document}